\documentclass[a4paper,reqno,oneside]{amsart}

\usepackage{a4wide}
\usepackage{hyperref}
\usepackage{amsmath,amsfonts,amssymb,amsthm}
\usepackage[utf8]{inputenc}
\usepackage{ifthen}
\usepackage{color}
\usepackage{graphicx}
\usepackage{todonotes}

\usepackage{enumitem}

\newtheorem{theorem}{Theorem}[section]

\newtheorem{definition}{Definition}
\newtheorem{lemma}[theorem]{Lemma}
\newtheorem{remark}[theorem]{Remark}

\usepackage{todonotes}

\usetikzlibrary{arrows,shapes}
\tikzstyle{vertex}=[circle,fill=black!15,minimum size=12pt,inner sep=0pt,
font=\footnotesize, text=black!50]
\tikzstyle{selected vertex} =
[vertex, fill=black!100,text=white, font=\bf]
\tikzstyle{selected point} =
[vertex, fill=black!100, minimum size = 4pt]
\tikzstyle{point} =
[vertex, fill=black!40, minimum size = 4pt]
\tikzstyle{edge} = [draw,black!40,-]
\tikzstyle{selected edge} = [draw,thick,-]
\tikzstyle{strong edge} = [draw,ultra thick,-]
\tikzstyle{weight} = [font=\small]
\tikzstyle{ignored edge} = [draw,line width=5pt,-,black!20]

\begin{document}


\title{Subgraph statistics in subcritical graph classes}

\author{Michael Drmota}
\address{(MD) TU Wien, Institute of Discrete Mathematics and Geometry, 1040 Wien, Austria}
\email{michael.drmota@tuwien.ac.at}
\urladdr{http://www.dmg.tuwien.ac.at/drmota/}

\author{Lander Ramos}
\address{(LR) Universitat Polit\`ecnica de Catalunya. Departament de Matem\`atica aplicada $2$, 08034 Barcelona, Spain}
\email{lander.ramos@upc.edu}
\urladdr{http://www-ma2.upc.edu/lramos/}

\author{Juanjo Ru\'e}
\address{(JR) Freie Universit\"at Berlin, Institut f\"ur Mathematik und Informatik, 14195 Berlin, Germany}
\email{jrue@zedat.fu-berlin.de}
\urladdr{http://www-ma2.upc.edu/jrue/}

\thanks{
M.D.~was supported by the SFB F50 \emph{Algorithmic and Enumerative Combinatorics} of the
Austria Science Foundation FWF.
J.\,R.~was partially supported by the FP7-PEOPLE-2013-CIG project CountGraph (ref. 630749), the Spanish MICINN projects MTM2014-54745-P and MTM2014-56350-P, the DFG within the Research Training Group \emph{Methods for Discrete Structures} (ref. GRK1408), and the \emph{Berlin Mathematical School}.
}
\maketitle


\begin{abstract}
Let $H$ be a fixed graph and  $\mathcal{G}$ a subcritical graph
class.
In this paper we show that the number of occurrences of $H$ (as a subgraph) in a uniformly at random graph of size $n$ in $\mathcal{G}$ follows a normal limiting
distribution with linear expectation and variance.
The main ingredient in our proof is the analytic framework developed by Drmota, Gittenberger and Morgenbesser to deal with infinite systems of functional equations \cite{DrmGittMor2015}.
As a case study, we get explicit expressions for the number of triangles and cycles of length four for the family of series-parallel graphs.
\end{abstract}


\section{Introduction} \label{sec: introduction}

The study of subgraphs in random discrete structures is a central area in graph theory, which dates back to the seminal works of Erd\H{o}s and R\'enyi in the sixties \cite{ErRe60}.
Since then, lot of effort has been devoted to locate the threshold function for the appearance of a given subgraph in the $G(n,p)$ model, as well as the limiting distribution of the corresponding counting random variable (see for instance \cite{KaRu81,JaLuRu90, Ruc90}, and the monograph \cite[Chapter 3]{RanGraphs}).
The number of appearances of a fixed graph and its statistics had been also addressed as well in different restricted graph classes, including random regular graphs and random graphs with specified vertex degree (see for instance, \cite{McKay81,GaWo08, colored, KiSuVu07, Mckaydeg}, see also~\cite{McKayICM}) and random planar maps ~\cite{GaoWor03, GaWo04}.

In this paper we study subgraphs on a random graph in a so-called subcritical class.
Roughly speaking, a graph class is called \emph{subcritical} if the largest block of a random graph in the class with $n$ vertices has $O(\log(n))$ vertices
(see the precise analytic definition in Section 3).
Indeed, graphs in these classes have typically a tree-like structure and share several properties with trees.
Just to mention some families, prominent subcritical graph classes are forests, cacti trees, outerplanar graphs and series-parallel graphs, and more generally graph families defined by a finite set of $3$-connected components (see \cite{3-con}).
Let us mention that the analysis of subcritical graph classes is intimately related to the study of the random planar graph model: it is conjectured that a graph class defined by a set of excluded minors is subcritical if and only if at least one of the excluded graphs is
planar (see \cite{Noy15}).

The systematic study of subcritical graph classes started in \cite{Bernasconi2009} when studying the expected number of vertices of given degree. Later, in \cite{DrFuKaKrRu11} the authors extended the analysis to unlabelled graph classes, and obtained normal limiting probability distributions for different parameters, including the number of cut-vertices, blocks, edges and the vertex degree distribution.
Drmota and Noy~\cite{DrmotaNoyAnalco} investigated several extremal parameters in these graph classes.
They showed, for instance, that the expected diameter $D_n$ of a random connected graph on a subcritical graph class on $n$ vertices satisfies $c_1 \sqrt{n} \leq \mathbb{E}[D_n] \leq c_2 \sqrt{n \log n}$ for some constants $c_1$ and $c_2$.
More recently, the precise asymptotic estimate has been deduce to be of order $\Theta(\sqrt{n})$~\cite{panagiotou2015}.
Furthermore, the normalized metric space $(V(G), d_{G}/\sqrt{n})$ (where $d_G(u, v)$ denotes the number of edges in a shortest path that contains $u$ and $v$ in $G$) is shown to  converge with respect to the Gromov-Hausdorff metric to the so-called \emph{Brownian Continuum Random Tree} multiplied by an scaling factor that depends only the class under study (see~\cite{panagiotou2015} for details, and also \cite{stufler2015} for extensions to the unlabelled setting).
Let us also mention that even more recently, the Schramm-Benjamini convergence had been addressed as well in \cite{schben15,stufler2015} for these graph families.
Finally, the maximum degree and the degree sequence of a random series-parallel graph have been studied in \cite{Drmota2011} and \cite{Bernasconi2009, Drmota20102}, respectively.
\\
\paragraph{\textbf{Our results}:} this paper is a contribution to the understanding of the shape of a random graph from on these graph classes. More precisely, we present a very general framework to deal with subgraph statistics in subcritical graph classes. Our main result is the following theorem:

\begin{theorem}\label{thm:main}
Let $\mathcal{C}_n$ be the set of connected graphs of size $n$ in a subcritical graph class $\mathcal{G}$, and let $H$ be a fixed (connected) graph. Let $X_n^{H}$ be the number of copies of $H$ on a uniformly at random object in $\mathcal{C}_n$. Then,
$$\mathbb{E}[X_n^H]=\mu_{H} n+ O(1)\,\,and\,\, \mathbb{V}\mathrm{ar}[X_n^H]=\sigma^2_H n+O(1)$$
for some constants $\mu_H,\,\sigma^2_H$ that only depends on $H$ (and on the subcritical graph class under study). Moreover, if $\sigma_H^2>0$, then
$$\frac{X_n^{H}-\mathbb{E}[X_n^H]}{\sqrt{\mathbb{V}\mathrm{ar}[X_n^H]}}\rightarrow \mathrm{N(0,1)}.$$
\end{theorem}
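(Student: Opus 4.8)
The plan is to prove the statement by the generating-function and singularity-analysis method, using the block-cut decomposition of connected graphs together with the infinite-system framework of \cite{DrmGittMor2015} to produce a Gaussian limit law via the Quasi-Powers Theorem. First I would introduce the bivariate generating function $C(x,u)=\sum_{n,k}c_{n,k}\,u^k x^n/n!$, where $c_{n,k}$ is the number of (labelled) connected graphs of $\mathcal{G}$ on $n$ vertices carrying exactly $k$ copies of $H$; here $x$ marks vertices and $u$ marks occurrences of $H$, so that the probability generating function of $X_n^H$ on $\mathcal{C}_n$ is $\mathbb{E}[u^{X_n^H}]=[x^n]C(x,u)/[x^n]C(x,1)$. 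The engine is the block-cut decomposition: a vertex-rooted connected graph is the root together with a set of blocks (maximal $2$-connected subgraphs and bridges) attached at the root, at whose remaining vertices further rooted connected graphs are substituted. At $u=1$ this yields the classical composition scheme $C^\bullet(x)=x\exp\!\big(B'(C^\bullet(x))\big)$ relating the rooted connected generating function $C^\bullet=xC'$ to the block generating function $B$, and subcriticality is precisely the analytic statement that the dominant singularity of $C^\bullet$ arises from the branch point of this scheme (so that $B'$ is still analytic there), producing a square-root singularity and the universal $n^{-3/2}$ behaviour.

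The essential difficulty is that a copy of $H$ need not live inside a single block: since $H$ is connected but may itself have cut vertices, its own block-cut tree can be distributed across several blocks of the host, glued at shared cut vertices. To mark such copies I would refine the decomposition by bookkeeping, at each root, the \emph{germ} of every partially embedded copy of $H$, that is, which sub-configuration of the block-cut tree of $H$ is still pending at the current root vertex. Because $H$ is fixed this introduces auxiliary generating functions indexed by these partial-embedding states, and, combined with the recursive ($2$- and $3$-connected) decomposition of the blocks themselves, one is led to a system of functional equations $\mathbf{y}=\mathbf{F}(x,u,\mathbf{y})$ that is a priori infinite. This is exactly the regime for which the framework of \cite{DrmGittMor2015} is built, in contrast with the finite implicit-function arguments used for simpler additive parameters in \cite{DrFuKaKrRu11}.

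Next I would verify the hypotheses of the infinite-system theorem of \cite{DrmGittMor2015} at $u=1$ and in a complex neighbourhood of it: nonnegativity and analyticity of the $F_i$, the required strong connectivity of the dependency graph of the system, and the presence of a square-root branch point inherited from the composition scheme via subcriticality. The output is a square-root-type singular expansion of $C(x,u)$ at $x=\rho(u)$, with $\rho(u)$ analytic and $\rho(1)$ equal to the radius of convergence, so that $[x^n]C(x,u)\sim \gamma(u)\,\rho(u)^{-n}n^{-3/2}$ uniformly for $u$ in a neighbourhood of $1$. Feeding this into the Quasi-Powers Theorem gives $\mathbb{E}[X_n^H]=\mu_H n+O(1)$ and $\mathbb{V}\mathrm{ar}[X_n^H]=\sigma_H^2 n+O(1)$, with $\mu_H=-\rho'(1)/\rho(1)$ and $\sigma_H^2=-\rho''(1)/\rho(1)-\rho'(1)/\rho(1)+\big(\rho'(1)/\rho(1)\big)^2$, together with the Gaussian limit whenever $\sigma_H^2>0$ (the degenerate case $\sigma_H^2=0$ being excluded by hypothesis).

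I expect the main obstacle to be the correct and complete setup of the system and the verification of its structural hypotheses: one must enumerate the partial-embedding states of $H$ so that every copy, including those straddling many blocks and sharing cut vertices, is counted exactly once, and then show that the $u$-perturbation moves neither the dominant singularity away from the branch point of the composition scheme nor changes its square-root type, i.e. that subcriticality is \emph{stable} under marking $H$. Establishing this uniform square-root singularity for the perturbed infinite system, and checking the strong connectivity needed to invoke \cite{DrmGittMor2015}, is where the real work lies; the passage from the singular expansion to the central limit theorem is then routine.
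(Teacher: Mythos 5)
Your overall route---marking partial embeddings of $H$ at the root, setting up an infinite system $\mathbf{y}=\mathbf{F}(x,u,\mathbf{y})$, invoking the infinite-system framework of Drmota--Gittenberger--Morgenbesser, and finishing with quasi-powers---is the paper's strategy, but your proposal has a genuine gap precisely at the point you flag as ``where the real work lies''. The operative hypothesis in the infinite-dimensional setting is not strong connectivity of the dependency graph (which is what you propose to check); it is compactness of the Jacobian operator $(\partial F_j/\partial y_i)_{i,j}$, and the paper secures it through the \emph{partition property} \eqref{eqinfsystemcond1}--\eqref{eqinfsystemcond2}: at $u=1$ each $F_j$ must collapse to $\tilde F_j(x,\,y_1+y_2+\cdots)$, a function of the aggregate sum alone, which forces the Jacobian to have rank one and hence be compact. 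The system you describe---one layer of blocks around the root, with states recording which sub-configuration of the block-cut tree of $H$ is still pending---violates this property as soon as $H$ has more than one cut-vertex: the germ pending at the root constrains which germs the connected graphs attached at the other block vertices must carry, so at $u=1$ the right-hand sides contain \emph{restricted} sums over the attached states and cannot be written in terms of $\sum_j y_j$. The paper exhibits exactly this failure in Subsection~\ref{subs.P3} for the simplest such case $H=P_3$: there $C_{2,l}^{\bullet}(x,1)$ is a restricted convolution of the $C_{k_i,l_i}^{\bullet}(x,1)$, and Theorem~\ref{Thanalytic2} does not apply to that system.

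The missing idea is the repair: iterate the block decomposition before closing the recursion, i.e.\ replace $C^{\bullet}=x\exp(B^{\circ}(C^{\bullet}))$ by its $h$-fold iterate as in Equation~\eqref{eq:rec}, where $h$ is the diameter of the block tree of $H$, and write each equation as a sum over entire \emph{$h$-root blocks} (trees of blocks of height $h$ around the root) with rooted connected graphs of arbitrary profile substituted at the outer level. Since every copy of $H$ through the root lies inside the $h$-root block, the exponent of $u$ becomes a function $G_1(b_h)+G_2(b_h,\mathbf{I}(|b_h|_2))$ of the $h$-root block and of the profiles of the attached graphs, and---crucially---the sum over those profiles is now unrestricted. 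Setting $u=1$ then collapses each equation to $x B^{(h),\circ}_{I}\bigl(x,\sum_{J}y_J\bigr)$, the partition property holds, the Jacobian has rank one, and Theorem~\ref{Thanalytic2} applies; the remaining verification (the weighted derivative bounds \eqref{eqderivatives}, handled via polynomial bounds on $G_1$ and $G_2$) is comparatively routine. Without this iteration step your plan stalls exactly where the paper shows the naive system breaks down.
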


The strategy we use on the proof is based on analytic combinatorics.
More precisely, given a subgraph $H$ we are able to get expressions for the counting formulas encoding the number of copies of $H$.
As we will show, even if $H$ has a very simple structure, we will need infinitely many equations and infinitely many variables to encode all the possible appearances.
Later, we will be able to fully analyze the infinite system of equations that we obtain using an adapted version of the main theorem of Drmota, Gittenberger and Morgenbesser \cite{DrmGittMor2015}, which provide the necessary analytic ingredient in order to study infinite functional systems of equations.
This result extends the classical Drmota-Lalley-Woods theorem for (finite) systems of functional equations (see for instance \cite{drmota97systems}).

Let us also discuss some similar results from the literature.
The study of induced subgraphs (also called patterns) in random trees was done in (\cite{ChyDrmKlaKo08}), showing normal limiting distributions with linear expectation and variance.
This covers in particular the distribution of the number of vertices of given degree in random trees.
In the more general setting of subcritical graph classes, the number of vertices of degree $k$ was studied in \cite{DrFuKaKrRu11}.
In another direction, appearances of a fixed subgraph $H$ (also called \emph{pendant copies}) in a subcritical graph class where studied in \cite{3-con} (see the proper definition of \emph{appearance} in~\cite{McStWe05}), showing again normal limiting distributions with linear expectation. As every appearance define a subgraph, this result shows that the number of subgraphs in a uniformly at random subcritical graph is (at least) linear. Our result strongly strengths this fact showing the precise limiting probability distribution.

As a case study, we get explicit constants for series-parallel graphs and for specific subgraphs. Recall that a graph is series-parallel if it excludes $K_4$ as a minor. Equivalently, a series-parallel graph has treewidth at most 2. We are able to show the following result for triangles:

\begin{theorem}
The number of copies of $K_3$, $X_n^{\blacktriangle}$, on a uniformly at random series-parallel graph with $n$ vertices
is asymptotically normal, with
$$
\mathbb{E}[X_n^{\blacktriangle}] = \mu_{\blacktriangle} n+O(1),\qquad \mathbb{V}\mathrm{ar}[X_n^{\blacktriangle}] = \sigma_{\blacktriangle}^2 n+O(1),
$$
where $\mu_{\blacktriangle} \approx 0.39481$ and $\sigma_{\blacktriangle}^{2} \approx 0.41450$.
\end{theorem}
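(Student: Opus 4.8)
The asymptotic normality of $X_n^{\blacktriangle}$ is not really the issue here: it is an immediate consequence of Theorem~\ref{thm:main}, applied to the connected graph $H=K_3$ in the subcritical class $\mathcal{G}$ of series-parallel graphs, once we know that the variance constant is strictly positive, $\sigma_{\blacktriangle}^2>0$ (which will follow a posteriori from the explicit value $0.41450>0$). The real content of the statement is thus the determination of the two constants $\mu_{\blacktriangle}$ and $\sigma_{\blacktriangle}^2$, and my plan is to make the generating-function machinery underlying Theorem~\ref{thm:main} completely explicit in this case and then read off the constants by a perturbative singularity analysis.

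First I would set up the standard combinatorial decomposition of labelled series-parallel graphs, introducing an extra variable $u$ marking copies of $K_3$. Writing $x$ for vertices and $y$ for edges, one starts from the class of networks (two-pole series-parallel graphs) with its series/parallel recursion, passes to $2$-connected graphs $B(x,y,u)$ through the edge-pointing relation with networks, and finally to connected graphs $C(x,u)$ (specialised to the uniform model $y=1$) through the usual block-decomposition composition scheme. The genuinely new ingredient is building the triangle marking into the network equations. Since $K_3$ is $2$-connected, every copy lies inside a single block, and a triangle $\{a,b,c\}$ is created precisely when a parallel composition at a pole pair $\{s,t\}$ carries a direct pole-edge $st$ together with a vertex adjacent to both poles. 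This suggests refining the networks according to two pieces of data: whether a direct pole-edge is present, and the number of vertices adjacent to both poles (recorded by an auxiliary catalytic variable). One checks that parallel composition adds these both-pole counts, that series composition resets them except for the junction vertex when the two flanking parts both carry a pole-edge, and that each such configuration turns into a triangle exactly when a pole-edge coexists with it. This gives a finite refined algebraic system in $x,y,u$ (and the catalytic variable) counting each triangle once.

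With the system in hand, the constants come from the quasi-powers framework that already powers Theorem~\ref{thm:main}. Subcriticality of $\mathcal{G}$ guarantees that the dominant singularity $\rho(u)$ of $C(x,u)$ is of square-root type, depends analytically on $u$ near $u=1$, and satisfies $[x^n]C(x,u)\sim K(u)\,\rho(u)^{-n}n^{-3/2}$. Standard transfer then yields
$$
\mu_{\blacktriangle}=-\frac{\rho'(1)}{\rho(1)},\qquad
\sigma_{\blacktriangle}^2=-\frac{\rho''(1)}{\rho(1)}-\frac{\rho'(1)}{\rho(1)}+\left(\frac{\rho'(1)}{\rho(1)}\right)^{2}.
$$
Concretely one locates $\rho(1)$ as the value of $x$ at which the implicit-function scheme governing $C(x,1)$ ceases to be smooth, and then obtains $\rho'(1)$ and $\rho''(1)$ by differentiating the equations defining the singularity with respect to $u$ and solving the resulting linear systems; a numerical evaluation produces $\mu_{\blacktriangle}\approx 0.39481$ and $\sigma_{\blacktriangle}^2\approx 0.41450$.

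The main obstacle is the second step, namely getting the combinatorial marking exactly right. One must be certain that the refinement of the network equations records only those parallel splits that create a genuine triangle and that no triangle is counted twice through its three edges, which is exactly what forces the distinction between networks with and without a direct pole-edge and the tracking of both-pole neighbours. Once the system is verified to be correct and to fit the analytic hypotheses of Theorem~\ref{thm:main}, confirming that the perturbed singularity is non-degenerate so that $\sigma_{\blacktriangle}^2>0$ is routine, and what remains is the numerical resolution of an explicit algebraic system to the stated precision.
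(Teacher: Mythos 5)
Your proposal follows essentially the same route as the paper: the paper likewise marks triangles inside the series-parallel network decomposition by observing that a triangle arises exactly when a direct pole edge coexists in a parallel composition with a series network whose poles are at distance two (its split into $P_0^\blacktriangle/P_1^\blacktriangle$ and $S_2^\blacktriangle/S_3^\blacktriangle$ is precisely the finite realization of your pole-edge/both-pole-neighbour refinement, the catalytic variable being unnecessary since a series network has at most one common neighbour of its poles), and it then extracts the constants from the subcritical composition $C^\bullet = x\exp\bigl(B^\circ(C^\bullet,u)\bigr)$ by the same singularity-perturbation/quasi-powers computation you describe. The only cosmetic differences are that the paper unroots networks via the dissymmetry theorem for trees instead of the edge-pointing/integration relation you invoke, and that your stated exponent $n^{-3/2}$ for $[x^n]C(x,u)$ should be $n^{-5/2}$ (it is $C^\bullet(x,u)$ that obeys the $n^{-3/2}$ law), neither of which affects the argument.
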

Our encoding also let us analyze the asymptotic number of triangle-free series-parallel graphs on $n$ vertices. Also, the more involved case of studying the number of copies of $C_4$, as well as series-parallel graphs with a given girth is discussed in Section \ref{sec:comp}.
$$\,$$
\paragraph{\textbf{Plan of the paper}:} the paper is divided in the following way. Section 2 is devoted to fix the notation concerning generating functions. Section 3 covers the analytic preliminaries of the paper. This section includes a modified version of the main theorem of Drmota, Gittenberger and Morgenbesser in \cite{DrmGittMor2015}, which is our main analytic ingredient on the proof of Theorem \ref{thm:main}. Section 4 deals with the easier situation where the subgraph under study is 2-connected. The arguments to deal with the general connected case are developed in Section 5. In order to prepare the reader to the involved notation used to deal with general subgraphs, some easy cases are fully developed. Section 6 is devoted to explicit computations in the family of series-parallel graphs. Finally, Section \ref{sec:concluding} discusses the results obtained so far and possible future investigations.


\section{Graph preliminaries}\label{sec: gr-prel}

In our work, all graphs we study are assumed to be simple (no loops nor multiple edges) and labelled.
A graph on $n$ vertices will be always labelled with different elements in $\{1,\dots,n\}$.

\subsection{Combinatorial classes. Exponential generating functions.}\label{sec:def_series}

We follow the notation and definitions in \cite{fs05}.
A \emph{labelled combinatorial class} is a set $\mathcal{A}$ joint with a size function, such that for each $n\geq 0$ the set of elements of size $n$,
denoted by $\mathcal{A}_n$, is finite.
Each object $a$ in $\mathcal{A}_n$ is built by using $n$ labelled \emph{atoms} of size $1$. In graph classes, atoms are precisely the vertices.

Two elements in $\mathcal{A}$ are said to be \emph{isomorphic} if one is obtained from the other by relabelling.
In particular, two isomorphic elements have the same size.
We always assume that a combinatorial class is stable under relabelling, namely, $a\in \mathcal{A}$ if and only if all relabellings of $a$ are also elements of $\mathcal{A}$.
For counting reasons we consider the exponential generating function (shortly the EGF) associated to the labelled class $\mathcal{A}$:
$$
A(x):=\sum_{n\geq 0}|\mathcal{A}_n|{x^n\over n!}.
$$
In our setting, we use the (exponential) indeterminate $x$ to encode vertices.
In the opposite direction, we also write $[x^n]A(x)=1/n! |\mathcal{A}_n|$.
The basic constructions we consider in this paper are described in Table~\ref{fig:dict}.
In particular, we consider the disjoint union of labelled classes, the labelled product of classes, the sequence construction, the set construction, the cycle and the substitution (see \cite{fs05} for all the details).

We additionally consider classes of graphs of various types depending on whether one marks vertices or not.
A (vertex-)\emph{pointed graph} is a graph with a distinguished (labelled) vertex.
A \emph{derived graph} is a graph where one vertex is distinguished but not labelled (the other $n-1$ vertices have distinct labels in $\{1,\dots,n-1\}$).
In particular, isomorphisms between two pointed graphs (or between two derived graphs) have to respect the distinguished vertex.

Given a graph class $\mathcal{A}$, the \emph{pointed} class $\mathcal{A}^{\bullet}$ is the class of pointed graphs arising from $\mathcal{A}$.
Similarly, the \emph{derived} graph class $\mathcal{A}^{\circ}$ is obtained by taking all derived graphs built from $\mathcal{A}$.
Hence, $|\mathcal{A}^{\bullet}_n|=n|\mathcal{A}_n|$ and $|\mathcal{A}^{\circ}_{n-1}|=|\mathcal{A}_n|$, and we have respectively  $A^{\bullet}(x)=x\frac{d}{d x}A(x)$ and $A^{\circ}(x)=\frac{d}{d x}A(x)$.
\begin{table}[htb]
\begin{center}
\begin{tabular}{c|c|c}
Construction & Class & Equations \\
\hline
Sum             & $\mathcal{C}=\mathcal{A}\cup \mathcal{B}$     & $C(x)=A(x)+B(x)$\\
Product         & $\mathcal{C}=\mathcal{A}\times \mathcal{B}$   & $C(x)=A(x)\cdot B(x)$ \\
Sequence        & $\mathcal{C}=\mathrm{Seq}(\mathcal{B})$       & $C(x)=1/(1-B(x))$  \\
Set             & $\mathcal{C}=\mathrm{Set}(\mathcal{B})$       & $C(x)=\exp(B(x))$  \\
Restricted Set  & $\mathcal{C}=\mathrm{Set}_{\geq k}(\mathcal{B})$       & $C(x)=\exp_{\geq k} (B(x))=\exp(B(x))-\sum_{i=0}^{k-1}{B(x)^i\over i!}$  \\
Cycle           & $\mathcal{C}=\mathrm{Cyc}(\mathcal{B})$     & $C(x)=-\frac{1}{2}\log(1-x)-{x\over 2}-{x^2\over 4}$  \\
Substitution    & $\mathcal{C}=\mathcal{A}\circ\mathcal{B}$     & $C(x)=A(B(x))$ \\
Pointing        & $\mathcal{C}=\mathcal{A}^{\bullet}$           & $C(x)=A^{\bullet}(x)=x\frac{d}{d x}A(x)$ \\
Deriving        & $\mathcal{C}=\mathcal{A}^{\circ}$             & $C(x)=A^{\circ}(x)=\frac{d}{d z}A(x)$ \\
\end{tabular}
\end{center}
\caption{The Symbolic Method translating combinatorial constructions into operations on counting series.}
\label{fig:dict}
\end{table}

Pointing and deriving operators will be only used over vertices.
When dealing with \emph{ordinary} parameters over combinatorial classes (for instance, edges or copies of a fixed subgraph) we use extra variables in the corresponding counting formulas. The partial derivatives of counting series with respect to parameters are denoted by subindices of the corresponding indeterminate. For instance, a generating function of the form $A_y(x,y)$ means $\frac{d}{d y} A(x,y)$.

\subsection{Graph decompositions}\label{subsec:gr-decomp}

A \emph{block} of a graph $g$ is a maximal 2-connected subgraph of $g$.
A graph class $\mathcal{G}$ is \emph{block-stable} if it contains the edge-graph $e$ (the unique connected graph with two labelled vertices), and satisfies the property that a graph $g$ belongs to $\mathcal{G}$ if and only if all the blocks of $g$ belong to $\mathcal{G}$.
Block-stable classes covers a wide variety of natural graph families, including graph classes specified by a finite list of forbidden minors that are all 2-connected. Planar graphs ($\mathrm{Ex}(K_5,K_{3,3})$) or series-parallel graphs ($\mathrm{Ex}(K_4)$) are block-stable.

For a graph class $\mathcal{G}$, we write $\mathcal{C}$ and $\mathcal{B}$ the subfamily of connected and $2$-connected graphs in $\mathcal{G}$, respectively. In particular, the following combinatorial specifications hold (see~\cite{BerLaLe98,DrmotaBook,GoulJack83}):
\begin{equation*}
\mathcal{G}=\mathrm{Set}(\mathcal{C}),\,\,\, \mathcal{C}^{\bullet}=\bullet\times \mathrm{Set}(\mathcal{B}^{\circ}\circ \mathcal{C}^{\bullet} ).
\end{equation*}
By means of the Table \ref{fig:dict} these expressions translates into equations of EGF in the following way:
\begin{equation}\label{eq:main_eq}
G(x)=\exp(C(x)),\,\,\, C^{\bullet}(x)=x\exp(B^{\circ}(C^{\bullet}(x))).
\end{equation}
See \cite{tutte1966} for further results on graph decompositions and connectivity on graphs.


\section{Analytic preliminaries}\label{sec: an-prel}

In this part we include the analytic results necessary in the forthcoming sections of the paper.

\subsection{Subcritical graphs}\label{sec:subcritical}
We start with the notion of subcritical graph class. Further details concerning these graph classes can be found in \cite{DrFuKaKrRu11}.

\begin{definition}
A block-stable class of (vertex labelled) graphs is called \emph{subcritical} if
\[
C^\bullet(\rho_C) < \rho_B,
\]
where $\rho_B$ denotes the radius of convergence of $B^\circ(x)$ and
$\rho_C$ the radius of convergence of $C^\bullet(x)$.
\end{definition}

Roughly speaking, subcritical condition means that the singular behaviour of $B^\circ(x)$ does not
interfer with the singular behaviour of $C^\bullet(x)$. Only the behaviour
of $B^\circ(x)= B'(x)$ for $|x| \le (1+\varepsilon)C^\bullet(\rho_C)$ matters
(where $\varepsilon>0$ is arbitrarily small).
From general theory (see for instance \cite{fs05}) it follows that $y = C^\bullet(x)$ becomes singular
for $x = x_0 = \rho_C$ if $x_0$ (and $y_0 = C^\bullet(x_0)$) satisfies the system
of equations
\begin{align*}
y_0 &= x_0 e^{B^\circ(y_0)},\\
1 &= x_0 e^{B^\circ(y_0)}{B^\circ}'(y_0),
\end{align*}
of equivalently if
\begin{align*}
1 &= y_0 B''(y_0),\\
x_0 &= y_0 e^{-B'(y_0)}.
\end{align*}
In particular, we just have to assure that the equation
$1 = y B''(y)$ has a solution $y < \rho_B$. Equivalently this is granted if
\[
\rho_B B''(\rho_B) > 1.
\]
It also follows from general theory that the solution function $C^\bullet(x)$ has a square-root type singularity at $x = \rho_C$ and can be (locally) written in the form
\[
C^\bullet(x) = h_1(x) - h_2(x) \sqrt{1- \frac x{\rho_C}},
\]
where $h_1(x)$ and $h_2(x)$ are analytic functions at $x = \rho_C$ and satisfy the condition $h_1(\rho_C) = C^\bullet(\rho_C)$ and $h_2(\rho_C) > 0$.

It is convenient to assume that our graph class is an \emph{$a$-periodic class}.
That is, we have $[x^n] C^\bullet(x) > 0$ for $n\ge n_0$.
Then it follows that $x=\rho_C$ is the only singularity on the circle of convergence $|x|=\rho_C$.
Additionally, there is an analytic continuation of $C^\bullet(x)$ to a domain of the form  $\{x \in \mathbb{C}: |x| < \rho' \,\, \mathrm{and}\,\, \arg(x - \rho_C)\not\in  [-\theta, \theta]\}$ for some real number $\rho' > \rho_C$ and some positive angle $0 < \theta < \pi/2$.
We call such a domain \emph{$\Delta$-region} or \emph{domain dented} at $\rho_C$.

More precisely, if $|x|= \rho_C$ but $x\ne \rho_C$ then
\[
|C^\bullet(x) {B^\circ}'(C^\bullet(x),1)| < 1.
\]
Thus by the Implicit Function Theorem $C^\bullet(x)$ has no singularity there and can be analytically continued.
Consequently, we get by singularity analysis over $C^\bullet(x) $ that
\[
[x^n] C^\bullet(x) = \frac{h_2(\rho_C)}{2\sqrt{\pi}} n^{-3/2} \rho_C^{-n}
\left( 1 + O\left( n^{-1} \right) \right).
\]
Since $C^\bullet(x) = x C'(x)$ we also obtain the local singular behavior of $C(x)$ which is of the form
\[
C(x) = h_3(x) + h_4(x) \left(1- \frac x{\rho_C}\right)^{3/2},
\]
for some functions $h_3(x)$ and $h_4(x)$ which are analytic at $x = \rho_C$. Since $G(x) = \exp(C(x))$ this also provides the local singular behavior
of $G(x)$:
\[
G(x) = h_5(x) + h_6(x) \left(1- \frac x{\rho_C}\right)^{3/2},
\]
where again $h_5(x)$ and $h_6(x)$ are analytic at $x = \rho_C$.
This implies (applying again singularity analysis) that
\[
[x^n] G(x) = \frac{3h_6(\rho_C)}{4\sqrt{\pi}} n^{-5/2} \rho_C^{-n}
\left( 1 + O\left( n^{-1} \right) \right).
\]
In what follows we will heavily make use of these properties of
subcritical graph classes.

\subsection{A single equation}

We first state a central limit theorem that is a slight modification of \cite[Theorem 2.23]{DrmotaBook}.
Let $F(x,y,{ u})= \sum_{n,m} F_{n,m}({ u}) x^n y^m$ be an analytic function in $x$, $y$ around $0$, and ${ u}$ is a complex parameter with $|u|=1$.
Suppose that the following conditions hold: $F(0,y,{ u}) \equiv 0$, $F(x,0,{ u})\not\equiv 0$ and all coefficients $F_{n,m}(1)$ of $F(x,y,1)$ are real and non-negative.
Suppose also that for $|u| = 1$  it is true that $|F_{n,m}(u)| \le F_{n,m}(1)$.
Finally, assume that the function $t\mapsto F(x,y,e^{it})$ is at least three times continuously differentiable and all derivatives are analytic, too, in $x$ and $y$. Then, by the implicit function Theorem it is clear that the functional equation
\begin{equation}
y = F(x,y,{ u})
\label{eq21.2}
\end{equation}
has a unique analytic solution $y=y(x,{ u}) = \sum_{n} y_n({ u})x^n$
with $y(0,{ u}) =0$ that is three times continuously differentiable with
respect to $t$ if $u=e^{it}$. Furthermore the coefficients $y_n({ 1})$ are non-negative.

It is easy to show that there exists an integer $d\ge 1$ and a residue class $r$ modulo $d$
such that $y_n(1) = 0$ if $n \not\equiv r (\bmod d)$. In order to simplify the following
presentation we assume that $d=1$ (namely, we discuss the $a$-periodic case).
The general case can be reduced to this case by a proper substitution in the original
equation.

We also assume that the region of convergence of
$F(x,y,{ u})$ is large enough such that
there exist non-negative solutions $x=x_0$ and $y=y_0$ of the
system of equations
\begin{align}\label{eq:singsyst}
y &= F(x,y,{ 1}),\\
1 &= F_y(x,y,{ 1}),\nonumber
\end{align}
with $F_x(x_0,y_0,1)\ne 0$ and $F_{yy}(x_0,y_0,{ 1})\ne 0$.

\begin{theorem}\label{Thanalytic1}
Let $F(x,y,u)$ satisfies the above assumptions and $y(x,u)$ is a power series in $x$ that is the (analytic) solution of
the functional equation $y = F(x,y,u)$. Suppose that $X_n$ is a sequence of random variables such that
\[
\mathbb{E}\left[u^{X_n}\right] = \frac{[x^n]\,y(x,u)}{[x^n]\, y(x,1)},
\]
where $|u|=1$. Set
\begin{align*}
\mu \,=& \,\frac {F_u}{x_0F_x},\\
\sigma^2\, =&\,\frac {1}{x_0F_x^3F_{yy}}\Bigl( F_x^2(F_{yy}F_{uu} - F_{yu}^2)
-2F_xF_u(F_{yy}F_{xu}-F_{yx}F_{yu}) + F_u^2(F_{yy}F_{xx} - F_{yx}^2) \Bigr)\\
&+\mu + \mu^2,
\end{align*}
where all partial derivatives are evaluated at the point $(x_0,y_0,1)$ solution to the system of equations \eqref{eq:singsyst}.
Then we have that
\[
\mathbb{E} [X_n] = \mu n + O(1),\, \mathbb{V}{\rm ar} [X_n] = \sigma^2 n + O(1)
\]
and if $\sigma^2> 0$ then
\[
\frac{X_n - \mathbb{E} [X_n]}{\sqrt{\mathbb{V}{\rm ar} [X_n]}} \rightarrow N(0,1).
\]
\end{theorem}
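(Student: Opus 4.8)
My plan is to treat $u$ as a perturbation parameter and to run a \emph{singularity perturbation analysis} of the solution $y(x,u)$, reducing the statement to an application of the Quasi-Powers Theorem (see \cite{fs05} and \cite[Theorem 2.23]{DrmotaBook}, of which this is a minor variant). Concretely, I would set $u=e^{it}$ for real $t$ near $0$, so that $\mathbb{E}[u^{X_n}]$ is the characteristic function of $X_n$, and track how the dominant singularity of $x\mapsto y(x,u)$ and the associated singular coefficient depend on $u$. The whole argument hinges on showing that this singularity moves smoothly with $u$ and that singularity analysis can be transferred \emph{uniformly} in a neighborhood of $u=1$.

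First I would settle the base point $u=1$. The hypotheses $F_x(x_0,y_0,1)\neq 0$ and $F_{yy}(x_0,y_0,1)\neq 0$ together with the critical system \eqref{eq:singsyst} put us in the classical smooth implicit-function situation: writing $y-F(x,y,1)$ near $(x_0,y_0)$ and applying a Weierstrass/Newton--Puiseux argument produces the local square-root expansion $y(x,1)=g(x)-h(x)\sqrt{1-x/x_0}$ with $g,h$ analytic at $x_0$ and $h(x_0)>0$. The aperiodicity reduction ($d=1$) guarantees that $x_0$ is the unique singularity on $|x|=x_0$, hence $y(x,1)$ continues to a $\Delta$-domain and singularity analysis gives $[x^n]y(x,1)\sim \tfrac{h(x_0)}{2\sqrt\pi}\,n^{-3/2}x_0^{-n}$.

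Next I would propagate this picture to $u$ close to $1$. The domination hypothesis $|F_{n,m}(u)|\le F_{n,m}(1)$ forces $|y_n(u)|\le y_n(1)$ for every $n$ (by induction on the recursion that determines the coefficients), so the radius of convergence $\rho(u)$ of $x\mapsto y(x,u)$ is at least $x_0$ and no singularity can lie strictly inside $|x|<x_0$. Applying the analytic Implicit Function Theorem to \eqref{eq:singsyst} with $u$ regarded as a parameter, the critical point $(\rho(u),y^{*}(u))$ depends analytically on $u$ near $1$, the singularity stays of square-root type, and one obtains
\[
y(x,u)=g(x,u)-h(x,u)\sqrt{1-\frac{x}{\rho(u)}}
\]
with $g,h$ analytic and $h(\rho(u),u)>0$. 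I expect this \textbf{uniformity} to be the main obstacle: one must exhibit a single $\Delta$-domain and a single square-root expansion valid for all $u$ in a fixed neighborhood of $1$, so that the transfer theorem applies with uniform error terms. This is exactly where the smoothness assumption (three times continuous differentiability of $t\mapsto F(x,y,e^{it})$) and the domination $|F_{n,m}(u)|\le F_{n,m}(1)$ do the work, controlling both the location of the singularity and the absence of competing singularities on the circle $|x|=\rho(u)$, as in \cite[Theorem 2.23]{DrmotaBook}.

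Granting the uniform transfer, I would conclude that, uniformly for $u$ near $1$,
\[
\mathbb{E}[u^{X_n}]=\frac{[x^n]y(x,u)}{[x^n]y(x,1)}=\frac{h(\rho(u),u)}{h(x_0,1)}\left(\frac{x_0}{\rho(u)}\right)^{n}\bigl(1+O(n^{-1})\bigr),
\]
which is precisely the quasi-power form $e^{nA(s)+B(s)}(1+O(1/n))$ with $s=it$, $A(s)=\log\!\bigl(x_0/\rho(e^{s})\bigr)$ and $B(s)=\log\!\bigl(h(\rho(e^{s}),e^{s})/h(x_0,1)\bigr)$. The Quasi-Powers Theorem then yields both the central limit theorem and $\mathbb{E}[X_n]=A'(0)\,n+O(1)$, $\mathbb{V}\mathrm{ar}[X_n]=A''(0)\,n+O(1)$. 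Finally I would identify the constants by implicit differentiation of \eqref{eq:singsyst}: differentiating once in $u$ and using $F_y=1$ at the critical point gives $\rho'(1)=-F_u/F_x$, hence $\mu=A'(0)=-\rho'(1)/x_0=F_u/(x_0F_x)$; differentiating a second time gives $\rho''(1)$, and since $A''(0)=-\rho''(1)/x_0+\mu+\mu^2$ the stated bracketed formula for $\sigma^2$ drops out after simplification. This last step is a determined but mechanical computation, so the only genuinely delicate point remains the uniform singularity analysis in $u$.
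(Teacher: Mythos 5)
Your proposal follows essentially the same route as the paper's proof: perturbing the critical system \eqref{eq:singsyst} to get a square-root singular representation $y(x,u)=h_1(x,u)-h_2(x,u)\sqrt{1-x/f(u)}$ uniformly near $u=1$, transferring to coefficient asymptotics, obtaining the quasi-power form for $\mathbb{E}\left[u^{X_n}\right]$, and identifying $\mu$ and $\sigma^2$ by implicit differentiation of the critical system. The one correction to make is that the hypotheses give only three-fold continuous differentiability in $t$ (with $u=e^{it}$), not analyticity in $u$, so the singularity $f(u)$ and the functions $h_1,h_2$ are merely $C^3$ in $t$ rather than analytic in $u$; accordingly one must conclude via the characteristic function and L\'evy's continuity theorem (as the paper does) instead of invoking the analytic Quasi-Powers Theorem --- this is precisely the ``slight modification'' of \cite[Theorem 2.23]{DrmotaBook} that the statement embodies.
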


\begin{proof}
The proof runs along the same lines as that of \cite[Theorem 2.23]{DrmotaBook}.
We just indicate the differences.

By the Implicit Function Theorem it follows that there
there exist functions $f({ u})$ and $g({ u})$
(for $|u-1| < \varepsilon$ and $|u| = 1$ for some
$\varepsilon>0$)  which are three times differentiable  with
respect to $t$ if $u=e^{it}$ that satisfies
\begin{align*}
g(u) &= F(f(u),g(u),u),\\
1 &= F_y(f(u),g(u),u)
\end{align*}
with  $f(1) =x_0$ and $g(1) = y_0$.
Furthermore, by applying a proper variant of the Weierstrass Representation Theorem
it follows (as in the proof of \cite[Theorem 2.23]{DrmotaBook}) that we have a presentation of the form
\begin{equation}\label{eqsqrtrep2.2}
y(x,{ u}) = h_1(x,{ u}) - h_2(x,{ u}) \sqrt{1-\frac x{f({ u})}}
\end{equation}
locally around $x=x_0$, ${ u} = { 1}$, where $h_1(x,u)$, and $h_2(x,u)$ are analytic in $x$ and three times continuously differentiable with respect to $t$ if $u = e^{it}$, where $h_1(f(u),u) = g(u)$ and
\[
h_2(f(u),u) = \sqrt{\frac{2f({ u}) F_x(f({ u}),g(u),{ u})}
{F_{yy}(f({ u}),g(u),{ u})}} \ne 0.
\]
Since $d=1$ we also get
\begin{equation}\label{eqTh2A1last.2}
y_n({ u}) = [x^n] y(x,u) = \sqrt{\frac{f({ u}) F_x(f({ u}),g(u),{ u})}
{2\pi F_{yy}(f({ u}),g(u),{ u})}} f({ u})^{-n} n^{-3/2} \left( 1 + O( n^{-1})\right)
\end{equation}
uniformly for $|u-1|<\varepsilon$ and $|u| = 1$. Hence,
\begin{equation}\label{equXnbeh}
\mathbb{E}\left[u^{X_n}\right] =  \frac{[x^n] y(x,u)}{[x^n] y(x,1)}
= \frac{h_2(f(u),u)}{h_2(f(1),1) } \left(  \frac {f(1)} {f(u)} \right)^n
\left( 1 + O\left( n^{-1} \right) \right).
\end{equation}
By using the local expansion of $f(u)$ we get for $u = e^{it}$
\[
 \frac {f(1)} {f(u)} = e^{it\mu - \sigma^2 t^2/2 + O(t^3)},
\]
which directly implies
\[
\lim_{n\to\infty} \mathbb{E} \left[e^{it (X_n - \mu n)/(\sigma \sqrt n) } \right]
= e^{-t^2/2}.
\]
By Levi's Theorem this proves the central limit theorem.
\end{proof}

\begin{remark}\label{mainremark}
In our applications, the function $y(x,u)$ will be the generating function
$C^\bullet(x,u) = x \frac{\partial}{\partial x}C(x,u)$ of connected graphs.
Since $[x^n] C^\bullet(x,u) = n [x^n] C(x,u)$ it follows that
\[
\frac{[x^n]\,C(x,u)}{[x^n]\, C(x,1)} = \frac{[x^n]\,C^\bullet(x,u)}{[x^n]\, C^\bullet(x,1)}
\]
and, thus, it is sufficient to work with $C^\bullet(x,u)$ instead of $C(x,u)$.
However, if we are interested in all graphs (not necessarily connected) we need to study the behaviour of $G(x,y)$.
By means of the set construction $G(x,u) = \exp(C(x,u))$ we have to replace $y(x,u) = C^\bullet(x,u)$
by the function
\[
G(x,u) = \tilde y(x,u) = \exp\left( \int_0^x \frac{y(\xi,u)}{\xi} \, d\xi \right)
\]
and the new random variable $\tilde X_n$ that is defined by
$\mathbb{E}\left[u^{\tilde X_n}\right] =  \frac{[x^n] \tilde y(x,u)}{[x^n] \tilde y(x,1)}$.
Indeed, $\tilde y(x,u)$ has a slightly different singular behaviour: from (\ref{eqsqrtrep2.2})
we obtain
\[
 \int_0^x \frac{y(\xi,u)}{\xi} \, d\xi =  h_3(x,{ u}) + h_4(x,{ u}) \left(1-\frac x{f({ u})}\right)^{3/2}
\]
and consequently
\[
\tilde y(x,u) = h_5(x,{ u}) + h_6(x,{ u}) \left(1-\frac x{f({ u})}\right)^{3/2}
\]
for proper function $h_3(x,u),\, h_4(x,u),\, h_5(x,u),\, h_6(x,u)$. However, from that expression
we obtain the same kind of asymptotic behavior as in (\ref{equXnbeh}) and a central limit
theorem for $\tilde X_n$ with the same asymptotic behaviour for mean and variance
as for $X_n$.
\end{remark}

\begin{remark}\label{rem:sigma}
In most of the applications, the condition $\sigma^2 >0$ is satisfied. As it is shown in~\cite[Lemma 4]{DrFuKaKrRu11}, if $y = F(x,y,u)= \sum_{n,m,k} a_{n,m,k} x^n y^m u^k$ satisfies some natural analytic conditions (see \cite{DrFuKaKrRu11}), and assuming that there are three integer vectors $(n_j,m_j,k_j)$, $j = 1,2,3$ with $m_j> 0$, $j=1,2,3$ with
\[
\left| \begin{array}{rrr}
n_1 & m_1-1 & k_1 \\ n_2 & m_2-1 & k_2 \\n_3 & m_3-1 & k_3
\end{array} \right| \ne 0
\]
and $a_{n_j,m_j,k_j} \ne 0$ for $j=1,2,3$, then $\sigma^2>0$.
\end{remark}

\begin{remark}\label{rem:system}
Finally we remark that Theorem~\ref{Thanalytic1} extends to a finite system of equations
$y_j = F_j(x,y_1,\ldots,y_K,u)$, $1\le j \le K$, provided that the system is strongly connected
(compare with \cite[Theorem 2.35]{DrmotaBook}). We will use this extension in Section~\ref{ss.4-cycles-sp}.
\end{remark}

\subsection{An infinite system of equations}
The main reference for this subsection is the work \cite{DrmGittMor2015}.
We start again with an equation of the form $y = F(x,y)$, where $F$ satisfies (almost)
the same assumptions as that of Theorem~\ref{Thanalytic1} (we just omit the conditions concerning $u$).
In particular this means that the solution $y = y(x)$ has a square-root type singularity at
$x_0$ and the coefficients $y_n = [x^n] y(x)$ have an asymptotic expansion of the form
(\ref{eqTh2A1last.2}), where $u=1$.

Next, for a parameter $u$ with $|u| = 1$, we suppose that there exist functions $y_j(x,u)$, $j= 1,2,\ldots$, such that
\begin{equation}\label{eq:partition}
y(x) = \sum_{j\ge 1} y_j(x,1)
\end{equation}
and that the functions ${\bf y} = (y_j(x,u))_{j\ge 1}=(y_j)_{j\ge 1}$ satisfy the (infinite) system of equations
\begin{equation}\label{eqinfsystem}
y_j = F_j(x,{\bf y}, u),\qquad j\ge 1,
\end{equation}
where $F_j$ has a power series expansion
$$F_j(x,{\bf y},u) = \sum_{n,m_1,m_2,\ldots} F_{j;n,m_1,m_2,\ldots}(u) x^n y_1^{m_1}y_2^{m_2}\cdots$$
with coefficients that satisfy  $|F_{j;n,m_1,m_2,\ldots}(u)| \le F_{j;n,m_1,m_2,\ldots}(1)$.
In particular, these coefficients are non-negative for $u=1$.
Moreover, we assume that for every $j$ there exists a function $\tilde F_j(x,y)$ with
\begin{equation}\label{eqinfsystemcond1}
F_j(x,{\bf y}, 1) = \tilde F_j(x,y_1+y_2+\cdots)
\end{equation}
and that
\begin{equation}\label{eqinfsystemcond2}
\sum_{j\ge 1} \tilde F_j(x,y) = F(x,y).
\end{equation}
Informally speaking, this means that the infinite system can be interpreted as a partition of
the main equation $y = F(x,y)$. Hence, we refer to this later as the \emph{partition} Property.

From these properties it immediately follows that $F_j$ is well defined (and also analytic) for $x$ and
${\bf y} = (y_j)_{j\ge 1}$ for which $F(|x|, \|{\bf y}\|_1)$ is analytic (recall that $\|{\bf y}\|_1=\sum_{j\geq 1} |y_j|$).
Consequently, under the same conditions, $\tilde F_j(|x|, \|{\bf y}\|_1)$ is convergent. Actually we only need convergence for $|x|< x_0+\varepsilon$ and $\|{\bf y}\|_1 < y_0 + \varepsilon$ for some $\varepsilon> 0$.

This property suggests to work in the space $\ell^1(\mathbb{C})$ for ${\bf y} = (y_j)_{j\ge 1}$.
However, in the present situation we have to be slightly more careful since we have to take also into account derivatives with respect to
$t$ (with $u = e^{it}$).
For this purpose we use weighted $\ell^1$ spaces of the form
\[
\ell^1(m,\mathbb{C}) = \Big\{ {\bf y} = (y_j)_{j\ge 1} \in \mathbb{C}^{\mathbb{N}} :
\|{\bf y }\|_{m,1} := \sum_{j\ge 1} j^m |y_j| < \infty \Big\},
\]
for some non-negative real number $m$ (see also Remark~\ref{mainremark2}).
Since $\|{\bf y }\|_{1} \le \|{\bf y }\|_{m,1}$ the functions $F_j$ are also well defined (and analytic) if
$|x|< x_0+\varepsilon$ and $\|{\bf y}\|_{m,1} < y_0 + \varepsilon$
for some $\varepsilon> 0$.

Finally we assume that, for each $j$, $F_j$ is three times continuously differentiable with
respect to $t$ with $u = e^{it}$ such that the series
\begin{equation}\label{eqderivatives}
\sum_{j\ge 1} j^m\frac{\partial^r }{\partial t^r} F_j(x,{\bf y}, e^{it}), \qquad r\in \{0,1,2,3\},
\end{equation}
converges absolutely for $|x|< x_0+\varepsilon$ and $\|{\bf y}\| < y_0 + \varepsilon$
(for some $\varepsilon> 0$). Note that the case $r=0$ just says that for each $j$ the mapping
$(x,{\bf y}) \mapsto F_j(x,{\bf y},u)$ is well defined in the space $\mathbb{C} \times \ell^1(m,\mathbb{C})$ with
$|x|< x_0+\varepsilon$ and $\|{\bf y}\|_{m,1} < y_0 + \varepsilon$ (for some $\varepsilon> 0$).

The main theorem in this context is the following:

\begin{theorem}\label{Thanalytic2}
Let $y(x,u)$ be a power series in $x$, $y(x,u) = \sum_{j\ge 1} y_j(x,u)$, where the set of power series $y_j= y_j(x,u)$, $j\ge 1$, satisfies an
infinite system of equations $y_j = F_j(x,{\bf y},u)$ satisfying the above assumptions. Suppose that $X_n$ is a sequence of random variables such that
\[
\mathbb{E}\left[ u^{X_n}\right] = \frac{[x^n]\,y(x,u)}{[x^n]\, y(x,1)}
\]
for $|u|=1$. Then we have
\[
\mathbb{E}[X_n] = \mu n + O(1) \quad\mbox{and}\quad
\mathbb{V}{\rm ar}[X_n] = \sigma^2 n + O(1)
\]
for some real constants $\mu> 0$ and $\sigma^2\ge 0$. Furthermore if $\sigma^2> 0$ then
\[
\frac{X_n - \mathbb{E}[X_n]}{\sqrt{\mathbb{V}{\rm ar}[X_n]}} \to N(0,1).
\]
\end{theorem}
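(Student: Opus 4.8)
The plan is to follow the same architecture as the proof of Theorem~\ref{Thanalytic1}, replacing the scalar Weierstrass preparation by its infinite-dimensional counterpart from \cite{DrmGittMor2015}, and to split the argument into the $u=1$ analysis (which is essentially the cited theorem) and the analytic perturbation in $u$ (which is the new ingredient producing the central limit theorem). Concretely, I would regard the system \eqref{eqinfsystem} as a single fixed-point equation $\mathbf{y} = \mathbf{F}(x,\mathbf{y},u)$ for $\mathbf{y} = (y_j)_{j\ge 1}$ in the weighted space $\ell^1(m,\mathbb{C})$. The coefficient domination $|F_{j;n,m_1,m_2,\ldots}(u)| \le F_{j;n,m_1,m_2,\ldots}(1)$ together with the absolute convergence of the series \eqref{eqderivatives} for $r\in\{0,1,2,3\}$ guarantee that $\mathbf{F}$ maps $(x,\mathbf{y})$ analytically into $\ell^1(m,\mathbb{C})$ on the polydisc $|x| < x_0+\varepsilon$, $\|\mathbf{y}\|_{m,1} < y_0+\varepsilon$, and is three times continuously differentiable in $t$ when $u = e^{it}$. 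The analytic implicit function theorem in Banach spaces then yields a local solution $\mathbf{y}(x,u)$, analytic in $x$ near $0$, three times differentiable in $t$, with $\mathbf{y}(0,u) = \mathbf{0}$.

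The decisive structural step is to locate and describe the dominant singularity, and here the partition property \eqref{eqinfsystemcond1}--\eqref{eqinfsystemcond2} is essential. Summing the equations at $u=1$ gives
\[
\sum_{j\ge1} y_j(x,1) = \sum_{j\ge1}\tilde F_j\Bigl(x,\sum_{k\ge1} y_k(x,1)\Bigr) = F\Bigl(x,\sum_{k\ge1} y_k(x,1)\Bigr),
\]
so $y(x) = \sum_{j} y_j(x,1)$ solves the scalar equation $y = F(x,y)$ and inherits the square-root singularity at $x_0$ with $y(x_0) = y_0$ from Section~\ref{sec:subcritical}. In particular the scalar singularity condition $1 = F_y(x_0,y_0,1)$ holds, which by a Perron--Frobenius argument for the positive Jacobian operator $\partial_{\mathbf{y}}\mathbf{F}(x_0,\mathbf{y}(x_0,1),1)$ on $\ell^1(m,\mathbb{C})$ forces its spectral radius to equal $1$ with $1$ a simple isolated dominant eigenvalue. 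This is exactly the hypothesis under which the main theorem of \cite{DrmGittMor2015} applies at $u=1$, delivering a vector square-root expansion for each $y_j(x,1)$ and coefficient asymptotics of the form \eqref{eqTh2A1last.2}.

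Next I would switch on the parameter $u$ as an analytic perturbation. Because $1$ is a simple isolated eigenvalue of the Jacobian at $(x_0,\mathbf{y}(x_0,1),1)$, standard analytic perturbation theory produces a singularity curve $f(u)$ with $f(1)=x_0$ that is analytic in $x$-data and three times differentiable in $t$, together with a vector Weierstrass representation
\[
\mathbf{y}(x,u) = \mathbf{h}_1(x,u) - \mathbf{h}_2(x,u)\sqrt{1 - \frac{x}{f(u)}}
\]
valid locally around $x = x_0$, uniformly for $|u-1|<\varepsilon$, $|u|=1$, with $\mathbf{h}_1,\mathbf{h}_2\in\ell^1(m,\mathbb{C})$. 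Summing over $j$ (legitimate since $\mathbf{h}_1,\mathbf{h}_2\in\ell^1$) gives the scalar representation $y(x,u) = h_1(x,u) - h_2(x,u)\sqrt{1 - x/f(u)}$ with $h_2(f(u),u)\ne 0$. From here the argument is identical to that of Theorem~\ref{Thanalytic1}: singularity analysis yields $[x^n]y(x,u)$ uniformly in $u$, the ratio collapses to $\frac{h_2(f(u),u)}{h_2(f(1),1)}\bigl(f(1)/f(u)\bigr)^n\bigl(1+O(n^{-1})\bigr)$, and expanding $f(1)/f(u) = \exp(it\mu - \sigma^2 t^2/2 + O(t^3))$ for $u=e^{it}$ gives, via L\'evy's continuity theorem, the Gaussian limit with $\mathbb{E}[X_n] = \mu n + O(1)$ and $\mathbb{V}\mathrm{ar}[X_n] = \sigma^2 n + O(1)$, the constants being read off from the expansion of $f$ (with $\mu>0$ because $f$ genuinely moves to first order and $\sigma^2\ge0$ by convexity).

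I expect the main obstacle to be the uniform control, in the infinite-dimensional setting, of the $t$-derivatives of the singularity data $f(u)$, $\mathbf{h}_1(x,u)$, $\mathbf{h}_2(x,u)$ --- precisely what forces the weighted spaces $\ell^1(m,\mathbb{C})$ and the absolute convergence of \eqref{eqderivatives} up to order $3$. One must verify that both the implicit function theorem and the perturbation of the simple eigenvalue can be carried out not merely analytically in $x$ but with three continuous derivatives in $t$, and that the square-root extraction and the ensuing singularity analysis stay uniform on a full neighborhood of $u=1$ on the unit circle; it is this uniformity that converts the pointwise expansion of $f(u)$ into quasi-power behaviour of $\mathbb{E}[u^{X_n}]$. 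The second delicate ingredient is the spectral hypothesis itself --- simplicity and isolation of the dominant eigenvalue of the Jacobian operator, which replaces the scalar nondegeneracy conditions $F_x\ne0$, $F_{yy}\ne0$ --- and this is exactly what the framework of \cite{DrmGittMor2015}, combined with the partition property, is designed to supply.
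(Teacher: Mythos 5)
Your overall architecture matches the paper's: reduce to \cite[Theorem 1]{DrmGittMor2015} (adapted to mere differentiability in $t$ and to the weighted space $\ell^1(m,\mathbb{C})$), use the partition property to tie the vector system to the scalar equation $y=F(x,y)$ and its square-root singularity, and then repeat the quasi-power argument of Theorem~\ref{Thanalytic1} once a uniform square-root representation in $u$ is available. However, there is a genuine gap at the decisive step. You assert, ``by a Perron--Frobenius argument,'' that the Jacobian $\partial_{\mathbf y}\mathbf F$ at the critical point has spectral radius $1$ with a simple isolated dominant eigenvalue, and at the end you defer this spectral hypothesis to ``the framework of \cite{DrmGittMor2015}, combined with the partition property.'' But that framework does not supply this property: compactness of the Jacobian operator is a \emph{hypothesis} of \cite[Theorem 1]{DrmGittMor2015} that the user must verify, and in infinite dimensions Perron--Frobenius (Krein--Rutman) theory is simply not available without such a compactness ingredient, so your appeal to perturbation of a simple isolated eigenvalue has no foundation as stated. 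The paper closes exactly this gap with a short but essential observation: by the partition property \eqref{eqinfsystemcond1}, the derivative $\frac{\partial F_j}{\partial y_i}(x,\mathbf y,1)=\frac{\partial \tilde F_j}{\partial y}(x,y_1+y_2+\cdots)$ is independent of $i$, so the Jacobian operator has rank one, hence is compact; its single nonzero eigenvalue is $\sum_j \partial \tilde F_j/\partial y = F_y$, which equals $1$ at the critical point. This rank-one remark is the heart of the paper's proof, and it (or a substitute) is missing from yours.

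A second, smaller omission: you apply the infinite-dimensional theorem to the full system, but some components may have $\tilde F_j$ independent of $y$; for those indices $y_j(x,1)=\tilde F_j(x,y(x))$ is analytic at $x_0$ and the corresponding equations decouple, so the full system need not satisfy the strong-connectivity (irreducibility) requirement of \cite[Theorem 1]{DrmGittMor2015}. The paper first splits the index set into $I_1$ (analytic components, treated afterwards as known functions) and $I_2$ (components inheriting the square-root singularity) and applies the cited theorem only to the reduced, strongly connected system over $I_2$; the vector square-root representation is then claimed only for $j\in I_2$, with the $I_1$ part entering as an analytic correction. Your proposal should incorporate this reduction before invoking the infinite-dimensional machinery.
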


\begin{remark}\label{mainremark2}
We note that a corresponding theorem for a finite system is also true (\cite{drmota97systems, DrmotaBook})
but in our context we just need the infinite version.

Furthermore, Theorem~\ref{Thanalytic2} even holds in slightly more general situations.
For example, if the functions $y_j(x,u)$ are not indexed by an integer $j\ge 1$ but by a multi-index ${\bf j} = (j_1,\ldots,j_d)$ of integers $j_i\ge 1$ then we can also adapt the space $\ell^1(m,\mathbb{C})$ to the space
\[
\Big\{ {\bf y} = (y_{\bf j})_{{\bf j}\ge {\bf 1}} \in \mathbb{C}^{\mathbb{N}^d} :
\|{\bf y }\|_{m,1} := \sum_{{\bf j}\ge {\bf 1}} \|{\bf j}\|_1^m |y_{\bf j}| < \infty \Big\}.
\]
Actually we will need this generalization if we consider subgraphs $H$ with more than one cut-vertex.

Finally, as for Theorem~\ref{Thanalytic1}  the central limit theorem transfers to $\tilde X_n$ that is defined with the help of
$\tilde y(x,u) = \exp\left( \int_0^x y(\xi,u)/\xi\, d\xi \right)$. Compare this fact with Remark~\ref{mainremark}.
\end{remark}

\begin{proof}
We first note that Theorem~\ref{Thanalytic2} will be deduced from \cite[Theorem 1]{DrmGittMor2015} with a slight adaption corresponding $u$ -- here we just
require differentiability with respect to $t$ if $u = e^{it}$ and not analyticity --
and corresponding to the underlying space -- we replace $\ell^1(\mathbb{C})$ by
$\ell^1(m,\mathbb{C})$.
Actually the modification corresponding to $u$ can be treated as in the proof of Theorem~\ref{Thanalytic1} and the change of the
underlying space does not change the proof at all, so we will not discuss these issues.

Next we note that (\ref{eqinfsystemcond1}) implies
\[
y_j(x,1) = \tilde F_j(x, y(x)),
\]
where $y=y(x)$ is the solution of the equation $y = F(x,y)$.
Thus, we study two cases.
First, if $\tilde F_j$ does not depend on $y$ then $y_j(x,1)$ is analytic at $x=x_0$.
This also implies that $y_j(x,u)$ is analytic for $|u| = 1$ and for $|x| < 1 + \varepsilon$ for some $\varepsilon> 0$.
Let $I_1$ denote the set of indices $j$ with this property.
Furthermore, since $F(x,y)$ is also analytic at $x = x_0$ it also follows that $\sum_{j\in I_1} y_j(x,u)$ is analytic in $x$
for $|x| < x_0 + \varepsilon$ and for $|u| = 1$.

In the second case $y_j(x,1)$ has a square-root singularity of the form
$$y_j(x) = h_{1,j}(x) - h_{2,j}(x)\sqrt{1- x/x_0},$$
which is inherited from that of $y(x)$.
Furthermore it follows that $F_j(x,y_1,y_2,\ldots,u)$ depends on all variables $y_i$, $i\ge 1$. Let $I_2$ denote the set of indices $j$ of the second case.

If we reduce now the infinite system to those equations with $j\in I_2$, where we consider $y_j(x,u)$
with $j\in I_1$ as already known functions, then we get a strongly connected
system of equations
\[
y_j = F_j(x,(y_i(x,u))_{i\in I_1}, (y_\ell)_{\ell \in I_2}, u),\qquad j\in I_2
\]
that satisfies all regularity assumptions of \cite[Theorem 1]{DrmGittMor2015}.
In particular, since
\[
|F_j(x,y_1,y_2,\ldots, u)| \le F_j(|x|,|y_1|,|y_2|,\ldots,1) = \tilde F_j(|x|, |y_1|+|y_2|+\cdots)
\]
and $\tilde F_j(x,y)$ is analytic (at least) in the region where $F(x,y)$ is analytic, it
follows that the function $F_j(x,y_1,y_2,\ldots, u)$ is well defined (and analytic in $x$ and $y_1,y_2,\ldots$)
for $x$ in a proper neighborhood of $0$, ${\bf y} = (y_j)_{j\ge 1}$ in a proper neighborhood
of $0$ in $\ell^1(m,\mathbb{C})$ and $u$ with $|u|=1$.

The only remaining assumption that has to be checked is that the operator
\[
J =\left( \frac{\partial F_j}{\partial y_i}(x, {\bf y}, 1) \right)_{i,j\in I_2}
\]
is compact.
Since the property $$F_j(x,y_1,y_2,\ldots, 1) = \tilde F_j(x,y_1+y_2+\cdots)$$ is satisfied, it follows
that $$\frac{\partial F_j}{\partial y_i}(x, {\bf y}, 1) =
\frac{\partial \tilde F_j}{\partial y}(x, y_1+y_2+\cdots)$$ is independent of the choice of $i$.
Hence the rank of $J$ equals $1$ which implies that $J$ is a compact operator.

Thus we can apply \cite[Theorem 1]{DrmGittMor2015} and obtain that all functions $y_j(x,u)$, $j\in I_2$,
have a common square-root type singularity, and an expression of the form
\[
y_j(x,{ u}) = h_{1,j}(x,{ u}) - h_{2,j}(x,{ u}) \sqrt{1-\frac x{f({ u})}}.
\]
with functions $f(u), h_{1,j}(x,u), h_{2,j}(x,u)$ that are three times
differentiable in $t$, where $u= e^{it}$ and analytic in $x$ around $x_0$.

Summing up we, thus, obtain a square-root singularity for $y(x,u)$.
So we are precisely in the same situation as in the proof of Theorem~\ref{Thanalytic1}.
And so the result follows.	
\end{proof}


\section{$2$-Connected Subgraphs}\label{sec:2-con}

The purpose of this section is to consider 2-connected subgraphs $H$.
This case is much easier than the general case since a 2-connected
subgraph can only appear in a block.
Due to its shortness, we include the proof for this specific subgraph case.

\begin{theorem}\label{Th2.1}
Suppose that $H$ is a $2$-connected graph that appears as a subgraph in a(n a-periodic)
subcritical graph class $\mathcal{G}$. Let $X_n = X_n^{(H)}$ denote
the number of occurences of $H$ as a subgraph in a connected or general random graph
of size $n$ of $\mathcal{G}$.

Then, $X_n$ satisfies a central limit theorem with
$\mathbb{E}\, X_n \sim \mu n$ and
$\mathbb{V}{\rm ar}\, X_n \sim \sigma^2 n$ as $n\to\infty$.
\end{theorem}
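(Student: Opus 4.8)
The plan is to reduce the count of 2-connected subgraphs $H$ to a single bivariate functional equation of the type handled by Theorem~\ref{Thanalytic1}, by exploiting the key structural fact that a 2-connected $H$ can only be embedded inside a single block of a graph in $\mathcal{G}$. This is what makes the case genuinely easier than the general one: there is no interaction between copies of $H$ spread across different blocks, so the whole counting can be pushed inside the block generating function $B^\circ$.

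Let me set up the bookkeeping. First I would introduce a second variable $u$ marking occurrences of $H$. Since every copy of $H$ lives inside exactly one block, I would first enrich the 2-connected generating function to $B(x,u)$ (and its derivative $B^\circ(x,u) = B'(x,u)$), where $u$ marks the number of subgraph-copies of $H$ inside a single 2-connected graph. Because $H$ is 2-connected, a subgraph-copy of $H$ in an arbitrary $g\in\mathcal{G}$ maps into a unique block, so marking copies of $H$ globally is the same as marking them block-by-block. Carrying the variable $u$ unchanged through the block decomposition \eqref{eq:main_eq}, I obtain the bivariate analogue
\begin{equation*}
C^\bullet(x,u) = x\,\exp\!\bigl(B^\circ(C^\bullet(x,u),u)\bigr),
\end{equation*}
so that $C^\bullet(x,u)$ is the solution of $y = F(x,y,u)$ with $F(x,y,u) = x\,e^{B^\circ(y,u)}$, and $[x^n]C^\bullet(x,u)/[x^n]C^\bullet(x,1) = \mathbb{E}[u^{X_n}]$ by Remark~\ref{mainremark}. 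The same remark handles the general (not necessarily connected) case via $G(x,u)=\exp(C(x,u))$.

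Next I would verify that $F(x,y,u)=x\,e^{B^\circ(y,u)}$ meets all the hypotheses of Theorem~\ref{Thanalytic1}: the conditions $F(0,y,u)\equiv 0$, $F(x,0,u)\not\equiv 0$, nonnegativity of the coefficients $F_{n,m}(1)$, the domination $|F_{n,m}(u)|\le F_{n,m}(1)$ for $|u|=1$ (which holds because $u$ marks a nonnegative integer parameter), and smoothness in $t$ where $u=e^{it}$. The crucial analytic point is that the singular system \eqref{eq:singsyst} has an admissible nonnegative solution $(x_0,y_0)$ with $F_x\ne 0$ and $F_{yy}\ne 0$: here subcriticality enters decisively. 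Precisely because $\mathcal{G}$ is subcritical, $C^\bullet(\rho_C)<\rho_B$, so the dominant singularity of $C^\bullet(x,1)$ comes from the branch-point equation $1=y_0 B''(y_0)$ with $y_0<\rho_B$ rather than from a singularity of $B^\circ$ itself; this is exactly the condition that $y=F(x,y,1)$ has a square-root singularity governed by \eqref{eq:singsyst}. By continuity in $u$ the branch point persists for $u$ near $1$, giving the movable singularity $f(u)$, and Theorem~\ref{Thanalytic1} then yields $\mathbb{E}[X_n]=\mu n + O(1)$, $\mathbb{V}\mathrm{ar}[X_n]=\sigma^2 n + O(1)$, and asymptotic normality when $\sigma^2>0$.

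The main obstacle I anticipate is \textbf{not} the analytic machinery, which is essentially turnkey once the equation is in the form $y=F(x,y,u)$, but rather justifying the combinatorial reduction cleanly: one must argue rigorously that marking copies of a 2-connected $H$ commutes with the block decomposition $\mathcal{C}^\bullet = \bullet\times\mathrm{Set}(\mathcal{B}^\circ\circ\mathcal{C}^\bullet)$. The delicate point is that a subgraph-copy of $H$ uses only edges, and since $H$ is 2-connected every such edge-set lies in one block of the ambient graph; hence the substitution $\mathcal{B}^\circ\circ\mathcal{C}^\bullet$ does not create or destroy copies of $H$ across the gluing cut-vertices, and the variable $u$ can be carried through the composition verbatim. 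I would state this as the central lemma of the section and give a short proof by appealing to the definition of a block as a maximal 2-connected subgraph. A secondary, minor issue is positivity of $\sigma^2$; I would not verify it in general but simply invoke the sufficient condition of Remark~\ref{rem:sigma}, leaving $\sigma^2=0$ as a degenerate possibility covered by the statement.
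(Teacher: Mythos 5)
Your proposal follows essentially the same route as the paper's proof: the bivariate equation $C^\bullet(x,u)=x\exp\bigl(B^\circ(C^\bullet(x,u),u)\bigr)$, justified by the fact that a 2-connected $H$ lies inside a single block, followed by an application of Theorem~\ref{Thanalytic1}, with subcriticality guaranteeing that the system \eqref{eq:singsyst} has its solution inside the region of convergence of $F(x,y,u)=x e^{B^\circ(y,u)}$, and Remark~\ref{mainremark} handling the general (disconnected) case. The only substantive difference is one of emphasis: the step you dismiss as ``turnkey'' --- the threefold continuous differentiability of $t\mapsto B^\circ(y,e^{it})$ --- is precisely the condition the paper takes care to verify, using the bound that a graph $H$ of size $L$ occurs at most $O(n^L)$ times in an $n$-vertex graph, so the $u$-derivatives introduce only polynomial factors and the radius $\rho_B$ is preserved; conversely, the combinatorial commutation of marking with the block decomposition, which you single out as the delicate point deserving a lemma, is treated as immediate in the paper.
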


\begin{proof}
Let $b_{n,k}^\circ$ the number of rooted $2$-connected graphs in $\mathcal{G}$
with $n-1$ non-root vertices such that $H$ appears precisely $k$ times as a subgraph.
Furthermore let
\[
B^\circ(x,u) = \sum_{n,k} b_{n,k}^\circ \frac {x^n}{n!} u^k
\]
be the corresponding generating function.

Let $C^\bullet(x,u)$ be the corresponding generating function of connected graphs
in $\mathcal{G}$ (where the root is non discounted).
Since $H$ is assumed to be $2$-connected the number of occurrences
of $H$ in a connected graph is just the sum of its occurrences in the $2$-connected
components. Hence we have
\[
C^\bullet(x,u) = x e^{B^\circ (C^\bullet(x,u),u)}.
\]
If $u=1$ then $B^\circ(x,1)$ and $C^\bullet(x,1)$ are the usual counting functions
that satisfy the equation  $C^\bullet(x,1) = x e^{B^\circ (C^\bullet(x,1),1)}$.

In order to prove Theorem~\ref{Th2.1} we just have to check the conditions
of Theorem~\ref{Thanalytic1}. By the subcritical condition we certainly have $x=x_0 = \rho_C$ and
$y=y_0 = C^\bullet(\rho_C,1)$ that satisfy
\begin{align*}
1 &= x_0 e^{B^\circ(y_0,1)}{B^\circ}'(y_0,1),\\
 y_0 &= x_0 e^{B^\circ(y_0,1)}.
\end{align*}
Furthermore, since $C^\bullet(\rho_C,1) < \rho_B$ the region of convergence
of $F(x,y,u) = x e^{B^\circ (y,u)}$ is large enough.

The only missing assumption that has to be (finally) checked is that
the mapping $t\mapsto x e^{B^\circ (y,e^{it})}$ is three times continuously
differentiable in $t$. Of course it is sufficient to study the mapping
$t\mapsto B^\circ (y,e^{it})$.
First  we note that
$|B^\circ(y,u)| \le B^\circ(|y|,1)$. From this it follows that
${B^\circ}(y,u)$ exists (and is also analytic in $y$) for all
$|y| < \rho_B$ and for $|u|=1$.
Next we note that the number of occurrences of
a graph $H$ of size $L$ in a graph with $n$ vertices is bounded by $O(n^L)$.
Write $b_n^\circ$ for the number of rooted 2-connected graphs in $\mathcal{G}$ with $n-1$ non-root vertices. Thus is follows that
\[
\left| \frac{\partial^r}{\partial u^r} \sum_k  b_{n,k}^\circ u^k \right|
\le  \sum_k k^r b_{n,k}^\circ = O( n^{rL} b_n^\circ )
\]
for $u$ with $|u| = 1$; for notational convenience we have taken the derivatives
formally with respect to $u$. However, since all all derivatives
$\frac{\partial^m}{\partial y^m} B^\circ(y,1)$ are finite it follows that
all derivatives $\frac{\partial^r}{\partial u^r} B^\circ(y,u)$ exist for $|u| = 1$.
(Alternatively we can use the bound $n^{rL} = O((1+\varepsilon)^n)$ for every
$\varepsilon> 0$ which implies that
$$\left| \frac{\partial^r}{\partial u^r} B^\circ(y,u) \right| = O(B^\circ(|y|(1+\varepsilon),1)).$$
Consequently all assumptions of Theorem~\ref{Thanalytic1} are satisfied and the
result follows for the connected case. In the general case, where we have to
work with $G(x,u) = \exp\left( \int_0^x C^\bullet(\xi,u)/\xi\, d\xi \right)$,
we get the same result, see Remark~\ref{mainremark}.
\end{proof}


\section{Connected Subgraphs}\label{sec:conn}

The purpose of this section is to extend Theorem~\ref{Th2.1} to subgraphs
$H$ that are not 2-connected, and hence prove the main Theorem \eqref{thm:main}.
%
The main difference between the 2-connected case and the (general) connected case is that occurrences of $H$ are not necessarily separated by cut-vertices. This means that we have to cut $H$ also into pieces (more precisely, into blocks) and to count all combinations of these pieces when two (or several) blocks are joint by a cut-vertex, or several cut-vertices.

We start this section by illustrating the arguments with the base case $H=P_2$, which is the simplest case of a graph $H$ that is not 2-connected.
Later, as a warm-up for the general case (where notation could be specially involved), we show the combinatorics behind two particular cases: copies of subgraphs with 1 cut-vertex and exactly 3 blocks (Subsection \ref{subs.1-cut}) and the number of copies of $P_3$ (Subsection \ref{subs.P3}). In both cases we show again the type of functional equations we obtain in this setting and the main difficulties that arise when encoding the counting formulas.
At the end of the section we indicate how the method can be modified to cover the general case, both combinatorially and analitically.

\subsection{Counting copies of $P_2$}\label{subs.P2}
Despite this example does not cover the full general case, it is important to say that in the proof of the main theorem in Subsection \ref{subs.general} we will a use similar type of arguments one we find a convenient encoding.

If $H$ is just a path $P_2$ of length 2 then the situation is relatively simple since $P_2$ just separates by a cut-vertex into two edges.
For example if we join two blocks at a cut-vertex and the two corresponding degrees of two blocks at this cut-vertex are $k_1$ and $k_2$ then there are $k_1k_2$ occurrences of $P_2$ just coming from this connection.
Actually it turns out that we have to distinguish between infinitely many situations (depending on the root degrees) which leads to an infinity system
of equations. Let us start by introducing corresponding generating functions for 2-connected graphs.
We denote by
\[
B_j^\circ(w_1,w_2,w_3,\ldots;u), \qquad j\ge 1,
\]
the generating function of blocks in $\mathcal{G}$, where
the root has degree $j$, where $w_i$ counts the number of non-root vertices
of degree $i$, and where $u$ counts the number of occurrences of $H = P_2$.
Formally this is a generating function in infinitely many variables.
Of course we have
\[
B_j^\circ(x,x,\ldots;u) = B_j^\circ(x,u),
\]
where $x$ counts the number of non-root vertices.
Consequently if $B_j^\circ(x,u)$ is convergent for some positive $x$ and
for $u$ with $|u| = 1$ then $B_j^\circ(w_2,w_3,\ldots;u)$ converges
for all $w_i$ with $|w_i| < x$ and for all $u$ with $|u| = 1$. Next let
\[
C_j^\bullet(x,u), \qquad j\ge 0,
\]
denote the generating function of connected rooted graphs in $\mathcal{G}$, where
the root vertex has degree $j$, where $x$ counts the number of (all) vertices and
$u$ the number of occurrences of $H = P_2$. Then by the same principle as above
we have $C_0^\bullet(x) = x$ and for $j\ge 1$
\begin{equation}\label{eqCk}
C_j^\bullet(x,u) = x \sum_{s\ge 0} \frac 1{s!}
\sum_{j_1+\cdots+ j_s = j} u^{\sum_{i_1 < i_2} j_{i_1}j_{i_2}}
\prod_{i=1}^s B_{j_i}^\circ\left( \sum_{\ell_1\ge 0} u^{\ell_1} C_{\ell_1}^\bullet(x,u),
\sum_{\ell_2\ge 0} u^{2\ell_2} C_{\ell_2}^\bullet(x,u),\ldots; u \right)
\end{equation}
It is convenient to replace all occurrences of $C_0^\bullet(x)$ by $x$. Thus we can
see the infinite dimensional vector ${\bf y} = {\bf y}(x,u) = (C_j^\bullet(x,u))_{j\ge 1}$
as the solution of an infinite dimensional system
of the form $y_j = F_j(x,{\bf {y}}, u)$, where $F_j$ is defined by
\begin{equation}\label{eq:Fj}
F_j(x,{\bf y},u) =
x \sum_{s\ge 1} \frac 1{s!}
\sum_{j_1+\cdots+ j_s = j} u^{\sum_{i_i < i_2} j_{i_1}j_{i_2}}
\prod_{i=1}^s B_{j_i}^\circ\left( x + \sum_{\ell_1\ge 1} u^{\ell_1} y_{\ell_1},
x+ \sum_{\ell_2\ge 1} u^{2\ell_2} y_{\ell_2},\ldots; u \right).
\end{equation}
We now show that this system of equations satisfies all assumptions of Theorem~\ref{Thanalytic2}.
First of all, let us check the partition property.
If $u=1$ the function $F_j(x,{\bf{y}},1)$ can be written as a function $\tilde F_j(x,y_1+y_2+\cdots)$, where
\[
\tilde F_j(x,y) =
x \sum_{s\ge 1} \frac 1{s!}
\sum_{j_1+\cdots+ j_s = j} \prod_{i=1}^s
B_{j_i}^\circ\left( x + y, 1 \right).
\]
In particular, $F(x,y) = \sum_j \tilde F_j(x,y)$ is equal to $x \exp( B^\circ(x+y) )$, which is analytic in $x$.
Since $|\tilde F_j(x,y)| \le \tilde F_j(|x|,|y|)$ it is sufficient to study
$ \tilde F_j$ for positive $x$ and $y$.
By Equation \eqref{eq:Fj} it follows that for all $n < j$ we have $[x^n] \tilde F_j(x,y) = 0$.
Consequently we have (for positive $x$ and $y$)
\[
j^m \tilde F_j(x,y) \le \left( x \frac{\partial}{\partial x} \right)^m \tilde F_j(x,y).
\]
By analyticity of $\sum_j \tilde F_j(x,y)$, it directly follows then that
\[
\sum_{j\ge 1} j^m \tilde F_j(x,y) \le \left( x \frac{\partial}{\partial x} \right)^m F(x,y).
\]
Thus, the infinite system is well defined (and analytic) on $\ell^1(m,\mathbb{C})$ for
every $m \ge 0$.

It remains to check Condition \eqref{eqderivatives} for $r \in \{1,2,3\}$.
For the sake of brevity we only work out the details of the case $r=1$. The remaining cases are more involved but
can be handled similarly.
We first note that $u$ appears in $F_j$ at three different places:

\begin{enumerate}
\item[(1)] as the power $u^{\sum_{i_i < i_2} j_{i_1}j_{i_2}}$,
\item[(2)] in sums of the form $\sum_{\ell \ge 1} u^{m\ell} y_\ell$ as an argument of one of the terms $B_{j_i}^\circ$ and
\item[(3)] as the last argument in one of the terms $B_{j_i}^\circ(w_1,w_2,\ldots;u)$.
\end{enumerate}

As above it is sufficient to consider positive $x$ and $y = y_1+y_2+ \cdots$ in order
to assure absolute convergence.

If we substitute $u =e^{it}$ and take the derivative with respect to $t$ it follows that
in Case (1) the derivative gives a factor of the form
\[
i\sum_{i_i < i_2} j_{i_1}j_{i_2} e^{it\sum_{i_i < i_2} j_{i_1}j_{i_2}}
\]
which can be absolutely bounded by
\[
\sum_{i_i < i_2} j_{i_1}j_{i_2} \le s(s-1) j^2.
\]
Thus we are led to consider the sum (which is an upper bound)
\begin{align*}
\sum_{j\ge 1} j^m x \sum_{s\ge 1} \frac 1{s!} s(s-1) j^2
& \sum_{j_1+\cdots+ j_s = j} \prod_{i=1}^s
B_{j_i}^\circ\left( x + y, 1 \right) \\
& \leq  \sum_{j\ge 1} j^{m+2} x \sum_{s\ge 2} \frac 1{(s-2)!}
\sum_{j_1+\cdots+ j_s = j} \prod_{i=1}^s
B_{j_i}^\circ\left( x + y, 1 \right) \\
&\le  \left( x \frac{\partial}{\partial x} \right)^{m+2}
\sum_{j\ge 1} x \sum_{s\ge 2} \frac 1{(s-2)!}
\sum_{j_1+\cdots+ j_s = j} \prod_{i=1}^s
B_{j_i}^\circ\left( x + y, 1 \right) \\
& =  \left( x \frac{\partial}{\partial x} \right)^{m+2}
 x B^\circ(x+y)^2 \exp\left( B^\circ(x+y) \right)
\end{align*}
which is certainly bounded (for positive $x$ and $y$). Now we study Case (2).
If we take derivatives we get
\begin{align*}
&\frac{\partial}{\partial t}
B_{j}^\circ\left( x + \sum_{\ell_1\ge 1} e^{it\ell_1} y_{\ell_1},
x+ \sum_{\ell_2\ge 1} e^{2it\ell_2} y_{\ell_2},\ldots; u \right) \\
&\quad = \sum_{m\ge 1} \frac{\partial}{\partial w_m}  B_{j}^\circ\left( x + \sum_{\ell_1\ge 1} e^{it\ell_1} y_{\ell_1},
x+ \sum_{\ell_2\ge 1} e^{2it\ell_2} y_{\ell_2},\ldots; u \right)
\sum_{\ell_m \ge 1} im \ell_m e^{it\ell_m} y_{\ell_m}
\end{align*}
which can be bounded from the above by
\[
\left( \sum_{\ell \ge 1} \ell |y_\ell| \right) \sum_{m \ge 1} m  \frac{\partial B_{j}^\circ} {\partial w_m} (x+y,1)
\]
Note that the sum $\sum_{m \ge 1} m  \frac{\partial B_{j}^\circ} {\partial w_m}$ corresponds to the sum
of the degrees of the non-root vertices. Since this sum is bounded by twice the number of edges it is bounded by
$n(n-1)$, where $n$ denotes the number of vertices. This leads us to the upper bound
\[
\left( \sum_{\ell \ge 1} \ell |y_\ell| \right) (x+y)^2  \frac{\partial^2 B_{j}^\circ} {\partial x^2} (x+y,1).
\]
This upper bound also implies the upper bound (recall that the derivative here is only restricted to Case (2)):
\[
\left|\frac{\partial}{\partial t} \prod_{i=1}^s  B_{j_i}^\circ \right| \le
\left( \sum_{\ell \ge 1} \ell |y_\ell| \right) (x+y)^2  \frac{\partial^2} {\partial x^2}
\prod_{i=1}^s  B_{j_i}^\circ (x+y,1).
\]
Finally, summing up over $j$ (with the weight $j^m$) we obtain the upper bound (for positive $x$ and $y$)
\begin{align*}
& \left( \sum_{\ell \ge 1} \ell |y_\ell| \right)  \sum_{j\ge 1} j^{m} x
 \sum_{s\ge 1} \frac 1{s!}
\sum_{j_1+\cdots+ j_s = j}  (x+y)  \frac{\partial^2} {\partial x^2}
\prod_{i=1}^s  B_{j_i}^\circ (x+y,1) \\
&\quad \le  \left( \sum_{\ell \ge 1} \ell |y_\ell| \right)
\left( x \frac{\partial}{\partial x} \right)^{m+2} (x+y)^3  \exp\left( B^\circ(x+y) \right).
\end{align*}


By assumption we know that $\sum_{\ell \ge 1} \ell |y_\ell|$ is bounded.
Hence, the whole term is bounded. Finally in Case (3) we can argue in the same way as in the proof of
Theorem~\ref{Th2.1} and obtain
\[
\left|\frac{\partial}{\partial t} B_j^\circ(x+y,e^{it}) \right| \le j^3 B_j^\circ(x+y,1)
\]
(in the case of $H = P_2$ we have $L=3$). This leads us to consider the sum
\[
\sum_{j\ge 1} j^m x \sum_{s\ge 1} \frac 1{s!}
 \sum_{j_1+\cdots+ j_s = j} \left(\sum_{i=1}^s j_i^3 \right)\prod_{i=1}^s
B_{j_i}^\circ\left( x + y, 1 \right)
\]
which can be bounded (similarly to Case (1)) by
\[
\left( x \frac{\partial}{\partial x} \right)^{m+3}
x B^\circ(x+y)^3 \exp\left( B^\circ(x+y) \right).
\]

By putting the Case (1)--(3) together it follows that (\ref{eqderivatives}) is
satisfied for $r=1$. As mentioned above the cases $r=2$ and $r=3$ can be
similarly handled. This completes the proof of the central limit theorem in the case of $H = P_2$ for connected graphs.


\subsection{Main example 1: Connected graphs with 1 cut-vertex}\label{subs.1-cut}
In this subsection and in the following one we will motivate that the general statement for general subgraphs will be way more complicated than the analysis of the number of subgraphs $P_2$ carried out in the previous subsection.
We discusse next the number of copies of a connected graph $H$ with exactly 1 cut-vertex and three \emph{different} blocks
attached to it.
Let $H_1$, $H_2$ and $H_3$ denote these blocks, and $v$ the cut-vertex of $H$.
Furthermore we denote by $H_{1,2}$ the graph spanned by the vertices of $H_1$ and $H_2$, and
similarly $H_{1,3}$ and $H_{2,3}$.
The unique cut-vertex in $H$ induces a vertex in each $H_i$, $H_{i,j}$ that we denote by $c(H_i)$
and $c(H_{i,j})$, respectively.
All indices in this subsection are vectors with six components, of the form $L=(l_1,l_2,l_3; l_{1,2},l_{1,3},l_{2,3})$.
As we will show, such an index will encode the number of copies of $H_i$ and $H_{i,j}$ incident with a certain vertex.

Let $\textbf{w}$ be the infinite vector with components $w_{K}$, with $K=(k_1,k_2,k_3; k_{1,2},k_{1,3},k_{2,3})$ being an index with 6 entries.
We denote by $B_{L}^\circ(\textbf{w};u)$, $L=(l_1,l_2,l_3;l_{1,2},l_{1,3}, l_{2,3})$ the generating function of derived blocks in $\mathcal{G}$,
where the root vertex is incident with $l_i$ copies of $H_i$ ($i\in\{1,2,3\}$) and $l_{i,j}$ copies of $H_{i,j}$ ($i\neq j$) at $c(H_i)$ and $c(H_{i,j})$, respectively.
We use the variable $w_{K}$ to encode the number of vertices which are incident with $k_i$ copies of $H_i$, and $k_{i,j}$ copies of the subgraphs $H_{i,j}$ at $c(H_i)$.
We also use the variable $u$ to count the number of copies of $H$.
We note that different copies of the same subgraph $H_i$ or $H_{i,j}$ could be overlapping.
From the previous definition, it is obvious that writing $w_{K}=x$ in $B_{L}^\circ(\textbf{w};u)$ for all $K$ we obtain the generating function $B_{L}^\circ(x,u)$ where now $x$ counts the total number of vertices. As in the analysis of $P_2$, if this generating function is convergent for some positive $x$ and for $u$ with $|u| = 1$ then $B_{L}^\circ(\textbf{w};u)$ converges for all $w_{K}$ with $|w_{K}| < x$ and for all $u$ with $|u| = 1$.

For a vector index $R=(r_1,r_2,r_3; r_{1,2},r_{1,3},r_{2,3})$, let $C_{R}^\bullet(x,u)$ be the generating function of vertex-rooted connected graphs in $\mathcal{G}$, where the root vertex is incident with $r_i$ copies of $H_i$ at $c(H_i)$ and similarly for the numbers $r_{ij}$ and the subgraphs $H_{ij}$, respectively, and where $u$ counts the number of occurrences of $H$. Each of these functions satisfies the following equation
\begin{equation}\label{eqCkk}
C_{R}^\bullet(x,u)= x \sum_{s \geq 0} \frac{1}{s!} \sum_{\{L_1,\dots,L_s\}}^{\ast}\, u^{\sum_{i\neq j \neq k} (l_{1}^i l_{2}^j l_{3}^k)+ \sum_{i \neq j} \left(l_{1,2}^i l_{3}^j+ l_{1,3}^i l_{2}^j+ l_{1,2}^i l_{3}^j+l_{2,3}^{i}l_{1}^j\right)} \prod_{i=1}^{s} B_{L_i}^{\circ}(\textbf{W},u)
\end{equation}
where the sum $\sum_{i\neq j \neq k}$ is taken over triplets with pairwise different indices, and the sum $\sum_{\{L_1,\dots,L_s\}}^{\ast}$ is taken over all sets of $s$ indices $L_i=(l_{1}^i,l_{2}^i,l_{3}^i;l_{1,2}^i,l_{1,3}^i, l_{2,3}^i)$, $i=1,\dots,s$ satisfying
\begin{eqnarray*}
&&\sum_{i=1}^{s} l_{1}^i=r_1,\, \sum_{i=1}^{s} l_{2}^i=r_2,\, \sum_{i=1}^{s} l_{3}^i=r_3,\\
&&\sum_{i=1}^{s} l_{1,2}^i+\sum_{i\neq j} l_{1}^i l_{2}^j=r_{1,2},\, \sum_{i=1}^{s} l_{1,3}^i+\sum_{i\neq j} l_{1}^i l_{3}^j=r_{1,3},\, \sum_{i=1}^{s} l_{2,3}^i+\sum_{i\neq j} l_{2}^i l_{3}^j=r_{2,3},
\end{eqnarray*}
and the infinite vector $\textbf{W}$ has components
\begin{equation}\label{eq:Case2}
W_{K}= \sum_{P} u^{k_{1}p_{2,3}+k_{2}p_{1,3}+k_{3}p_{1,2}+k_{1,2}p_{3}+k_{1,3}p_{2}+k_{2,3}p_{1}} C_{P}^\bullet(x,u).
\end{equation}

Formula \eqref{eqCkk} reads in the following way: a pointed connected graph in the family where the root vertex is incident with $r_i$ copies of $H_i$ at $c(H_i)$ (and similarly for the numbers $r_{ij}$ and the subgraphs $H_{ij}$, respectively) is obtained by pasting a set of blocks at the root vertex, and adding the extra copies of $H$ created, both arising from the root vertex and for the composition of the blocks with the recursive copies of connected rooted objects. This last term is encoded by means of the term in $u$ after the sum $\sum_{L_i}^{\ast}$.

Let us show the analogy with the study of $P_2$.
Copies of $H$ may arise from 3 different sources:

\begin{enumerate}
\item  New copies that are incident with the root vertex.
\item  New copies not incident with the root vertex, built by taking subgraphs of $H$ already existing in the blocks and completing them.
\item  Copies already existing in the blocks incident with the root vertex.
 \end{enumerate}
In particular, Case (2) corresponds to the term in $u$ in Equation \eqref{eq:Case2}.
The analysis of this system of equations is very similar to the study of the number of copies of $P_2$ and can be mimic without any difficulty.
The only technical point in the analysis is that we have to check several properties in the functional space introduced in Remark~\ref{mainremark2}.

Let us also mention that the very similar arguments (with more indices) apply for subgraphsh $H$ with exactly one cut-vertex  (even with more than three blocks and possible block repetitions).

\subsection{Main example 2: Counting copies of $P_3$}\label{subs.P3}
We present an additional warm-up example, where we show a new difficulty that arises for subgraphs with more than one cut-vertex.
As we will see, it is not enough to express the infinite system of equations in terms of 'indexed' block families counting formulas.
Indeed, for each block in the class (and for each set of blocks) we will need a very precise information of its internal structure. It will turn out that Theorem~\ref{Thanalytic2} does not directly apply. However, we will
show how this problem can be overcome.

For illustrative reasons of this phenomenon, we just study the number of copies of $P_3$ on the subcritical class graph where all 2-connected blocks are isomgorphic to $K_4$ minus an edge.
This family is indeed subcritical due to the fact that the generating function for blocks is analytic (see \cite{3-con}).
We denote by $C_{k,l}^{\bullet}(x,u)$ the generating function of (vertex) rooted connected graphs in the family where the root vertex has degree $k$ and is the starting point of $l$ paths of type $P_2$ (possibly intersecting). As usual, $u$ marks occurrences of $P_3$.

In our setting, we have $B^\circ(x)=x^3$. Observe that (up to the labellings of the vertices) $K_4$ minus an edge has two different ways to be rooted: either over a vertex of degree 2 or degree 3. We call the resulting derived objects  $b_1^{\circ}$ and $b_2^{\circ}$ with generating functions $b_1^{\circ}(x) = b_2^{\circ}(x) = \frac 12 x^3$, respectively.

Let us now describe the system of equations satisfied by $C_{k,l}^{\bullet}(x,u)$, or at least the form of the first equations for small indices.
It is obvious that $C_{0,0}^{\bullet}(x,u)=x$, that for every choice of $l\neq 0$, $C_{0,l}^{\bullet}(x,u)=0$.
Also, for every choice of $l\geq 0$ $C_{1,l}^{\bullet}(x,u)=0$.
Expressions for $C_{2,l}^{\bullet}(x,u)$ and $C_{3,l}^{\bullet}(x,u)$ become more involved: in both cases we may have a block of type $b_1^{\circ}$ (and $b_2^{\circ}$, respectively) incident with the root of the connected object. See Figure \ref{graphic1} for a general structure of both cases.

\begin{figure}[htb]
\begin{center}
\includegraphics[width=3.5cm]{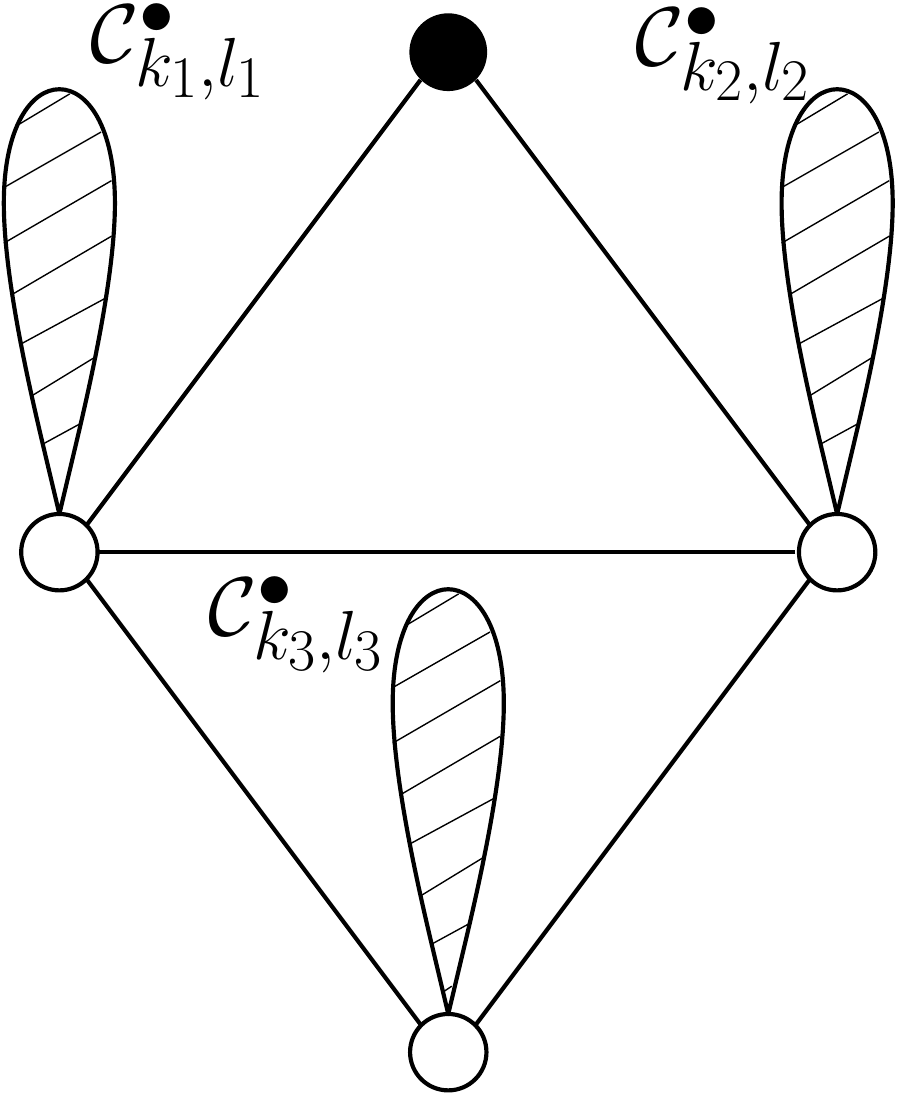}$\,\,\,\,\,\,\,\,$\includegraphics[width=3.5cm]{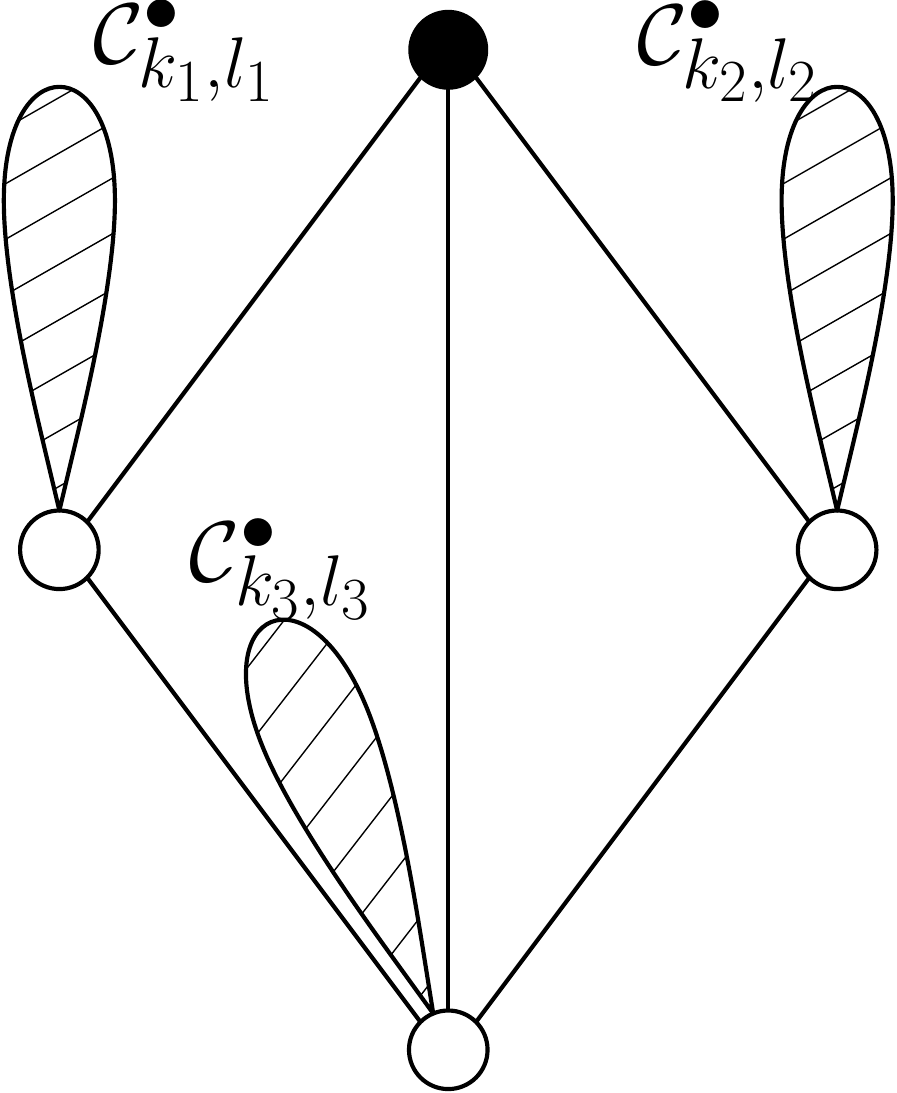}
\end{center}
\caption{ Block structure when dealing with $C_{2,l}^{\bullet}$ and $C_{3,l}^{\bullet}$. The shaded regions represent different rooted connected copies.}
\label{graphic1}
\end{figure}

Following the notation in Figure~\ref{graphic1}, by writing $I=(k_1,l_1,k_2,l_2,k_3,l_3)$ we have the following relations:
\begin{eqnarray*}
C_{2,l}^{\bullet}(x,u)=\frac{1}{2} x u^6 \sum_{I}^{\ast} u^{k_1k_2+k_2k_3+k_2k_3+4(k_1+k_2+k_3)+3(l_1+l_2)+2l_3} C_{k_1,l_1}^{\bullet}(x,u)C_{k_2,l_2}^{\bullet}(x,u)C_{k_3,l_3}^{\bullet}(x,u),\\
C_{3,l}^{\bullet}(x,u)=\frac{1}{2} x u^6 \sum_{I}^{\ast\ast} u^{k_1k_3+k_2k_3+4(k_1+k_2+k_3)+2(l_1+l_2)+3l_3} C_{k_1,l_1}^{\bullet}(x,u)C_{k_2,l_2}^{\bullet}(x,u)C_{k_3,l_3}^{\bullet}(x,u),
\end{eqnarray*}
where the first sum $\sum_{I}^{\ast}$ is taken over indices $I$ satisfying  $k_1+k_2+4=l$ and the second sum $\sum_{I}^{\ast\ast}$ is taken over indices $I$ satisfying  $k_1+k_2+k_3+4=l$.
Both formulas are easily explained by checking the structure depicted in Figure \ref{graphic1}.

It is very important to notice that the function $f(b_i^{\circ}, I)$ depends on the choice of the block.
In fact, this term not only encodes an internal information of the block, but also how different pasted connected copies interact along it in order to create new copies of $P_3$.
Let us describe more precisely the leftmost term defining $C_{2,l}^{\bullet}(x,u)$. An object counted in $C_{2,l}^{\bullet}(x,u)$ is obtained by pasting three rooted connected objects over vertices of a block of type $b_1^{\circ}$.
Then, the final number of paths of length $3$ arise from the following contributions:
\begin{enumerate}
\item The existing paths of length $3$ in each of the pasted rooted connected components.
\item The existing paths of length $3$ in $b_1^{\circ}$ (6 in total).
\item Paths that are created by concatenating paths of length 1 in $b_1^{\circ}$ with paths of length 2 in each pasted rooted connected component.
\item Paths that are created by concatenating paths of length 2 in $b_1^{\circ}$ with paths of length 1 in each pasted rooted connected component.
\item Paths created by using 2 paths of length 1 in a pair rooted connected components which are linked in $b_1^{\circ}$ by a path of length 1.
\end{enumerate}
As mentioned above, the most difficult term to be encoded is the one in item (5) and it is given by the correlation term $k_1k_2+k_2k_3+k_2k_3$, which is build explicitly using the internal structure of $b_1^\circ$ and the set of indices $I$.

The situation is even more involved if several blocks are attached to the
root. For example, the equations for $C_{4,l}^{\bullet}(x,u)$ and $C_{5,l}^{\bullet}(x,u)$ require
the whole information of the two attached blocks. Nevertheless, it is clear how to set up
an infinite system of equations for the functions $C_{k,l}^{\bullet}(x,u)$.

Unfortunately this system does not satisfy all assumptions of Theorem~\ref{Thanalytic2}.
Namely if we set $u=1$ we obtain for example
\begin{eqnarray*}
C_{2,l}^{\bullet}(x,1)=\frac{1}{2} x  \sum_{I}^{\ast} C_{k_1,l_1}^{\bullet}(x,1)C_{k_2,l_2}^{\bullet}(x,1)C_{k_3,l_3}^{\bullet}(x,1)
\end{eqnarray*}
where thus sum is taken over indices $I$ satisfying $k_1+k_2+4=l$. This means that the right hand sind cannot
be written in terms of $C^\bullet(x) = \sum_{k,l} C_{k,l}^{\bullet}(x,1)$.

However, it is possible to modify our setting slightly. Instead of analyzing the block decomposition related
to the equation $C^\bullet(x) = x \exp( B^\circ(C^\bullet(x)))$ we iterate this equation and replace it
by
\[
C^\bullet(x) = x \exp( B^\circ(x \exp( B^\circ(C^\bullet(x)))))),
\]
which means that we specify first a tree of height two of (rooted) blocks before we substitute each vertex
by $C^\bullet$ in order to obtain a recurvice description for $C^\bullet$.

We demonstrate this procedure by considering one special instance that is part of the equation
for $C_{2,6}^\bullet(x,u)$, compare with Figure~\ref{graphic2}. Here the root block is of type
$b_1^\circ$. One non-root vertex of this block is attached by another block of type $b_1^\circ$,
a second non-root vertex is attached by a block of type $b_2^\circ$, whereas the third
non-root vertex has no block attached. It is clear that such a block structure will lead to a
connected graph of type $(k,l) = (2,6)$ - and there are 5 other instances similar to that which
cover then all situations of this form.

\begin{figure}[htb]
\begin{center}
\includegraphics[width=7.0 cm]{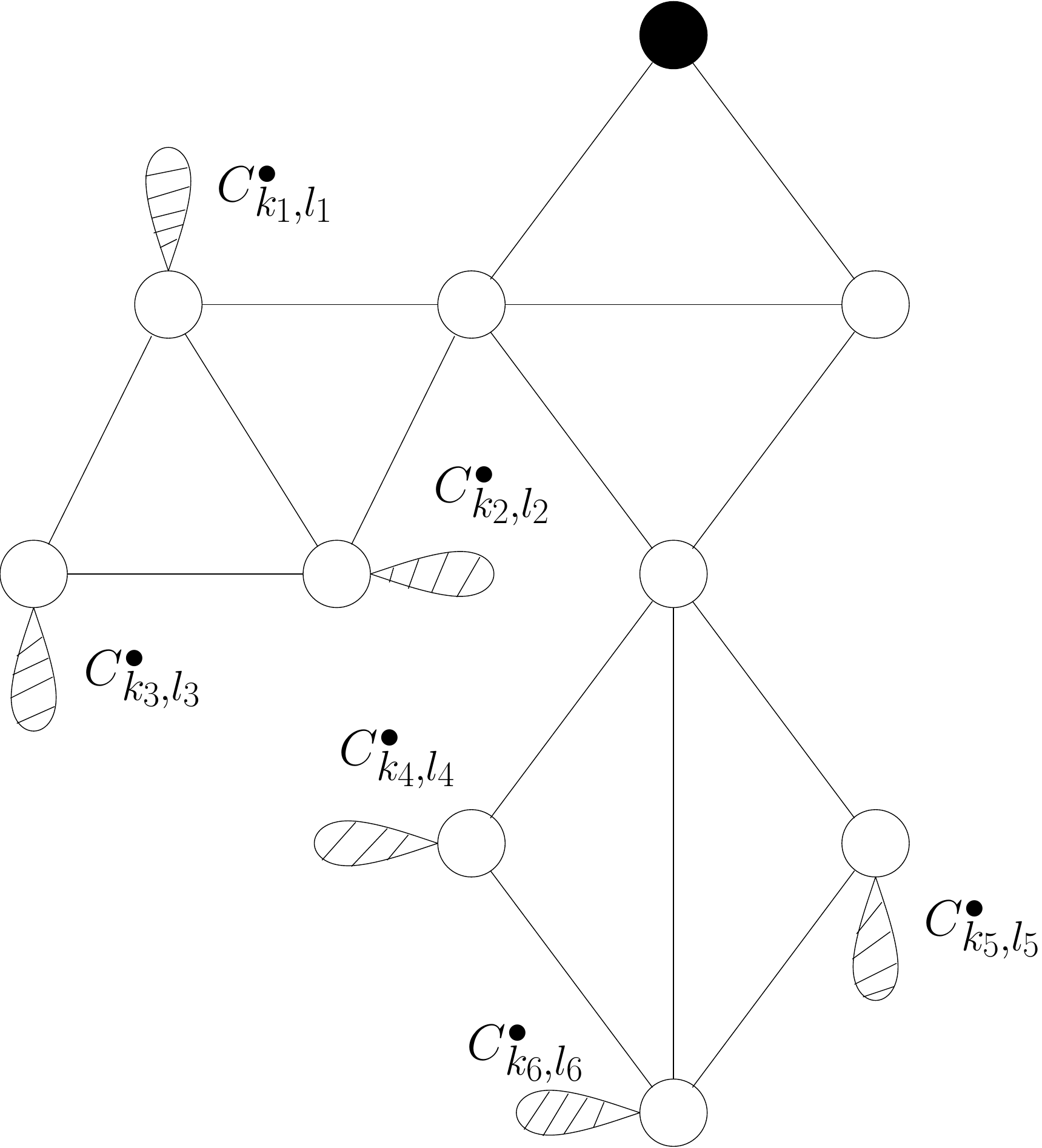}
\end{center}
\caption{ Construction of rooted connected objects of type  $(k,l) = (2,6)$.}
\label{graphic2}
\end{figure}

The corresponding generating function is then of the form
\begin{eqnarray*}
\frac 98 x^{10} u^{64} \sum_K u^{H(K)} \prod_{i=1}^6 C_{k_i,l_i}^{\bullet}(x,u),
\end{eqnarray*}
where the sum is taken over all indices
$K = (k_1,l_1,k_2,l_2,k_3,l_3,k_4,l_4,k_5,l_5,k_6,l_6)$ and
\begin{eqnarray*}
H(K) &=& k_1k_2+ k_1k_3+k_2k_3 + 3(l_1+l_2) + 2l_3 + 4(k_1+k_2+k_3) \\
&+& k_4k_5+ k_4k_6+k_5k_6 + 2(l_4+l_5) + 3l_6 + 4(k_4+k_4+k_4).
\end{eqnarray*}
In the same way we can deal with the other 5 cases which leads to a functional equation
for $C_{2,6}^\bullet(x,u)$ of the form
\[
C_{2,6}^\bullet(x,u) = F_{2,6}\left( x,u, (C_{k,l}^\bullet(x,u))_{k,l\ge 0}\right).
\]
The main difference between this representation and the above version is that the sum
over the indices $K$ has no restriction which implies that
\[
C_{2,6}^\bullet(x,1) = \frac 98 x^{10} C^\bullet(x)^6.
\]
Hence we are again in a situation, where we can apply Theorem~\ref{Thanalytic2} which
leads to a central limit theorem.

\subsection{The general case. Proof of Theorem \ref{thm:main}}\label{subs.general}

We finally deal with the study of the number of copies of a fixed subgraph $H$.
Recall that the new difficulty emerging when considering copies of $P_3$  was the existence of a correlation
between the root type and the root types of the attached connected graphs
In this section we show how we can overcome these (and other) kinds of problems. We start
with the observation that the equation characterizing (rooted) connected graphs in
terms of blocks can be iteratively written as follows:
\begin{equation}\label{eq:rec}
C^{\bullet}(x)=x \exp(B^{\circ}(C^{\bullet}(x)))=x \exp(B^{\circ}(x \exp(B^{\circ}(C^{\bullet}(x)))))=\dots.
\end{equation}
When stopping after $h$ iterations, Equation~\eqref{eq:rec} says that a rooted connected graph is obtained by repeating $h$ times the operation of pasting a set of rooted blocks on vertices, and finally substituting recursively rooted connected graphs on each vertex -- in the previous section we did just one interation.

We introduce now some notation.
Let $c^{\bullet}\in \mathcal{C}^{\bullet}$ be a rooted graph in our graph class.
We say that the set of blocks of $c^{\bullet}$ which are at distance at most $h$ to the root vertex in its block graph is the $h$-\emph{root block} of $c^{\bullet}$.
We define $\mathcal{B}^{(h),\circ}$ to be the family of all possible $h$-root blocks.
We write $B^{(h),\circ}(x,w)$ for the EGF associated to $\mathcal{B}^{(h),\circ}$, where $x$ encodes vertices on the $h$-root block until level $h-1$, while the extra parameter $w$ encodes vertices belonging to the blocks pasted in the last step of the iteration (namely, at level $h$). Then, it is satisfies
\begin{equation*}
B^{(h),\circ}(x,w)=\exp\left(x B^{(h-1),\circ}(x,w)\right)
\end{equation*}
with initial condition $B^{(1),\circ}(x,w)=\exp(B^{\circ}(w))$.
From Equation~\eqref{eq:rec} we get that for each $h\geq 1$, $C^{\bullet}(x)= x B^{(h),\circ}(x , C^{\bullet}(x))$.
In particular $C^{\bullet}(x)= x B^{(1),\circ}(x,C^{\bullet}(x))=x \exp(B^{\circ}(C^{\bullet}(x)))$.
According to the previous considerations it is obvious that the composition $B^{(h),\circ}(C^{\bullet}(x))$ is also subcritical.
Hence, in we may assume that all the analysis will be done for points $x$ where the function $B^{(h),\circ}(x)$ is analytic.
We also write $B^{(h),\circ}(x,w,u)$ for the counting formula of $h$-root blocks, where $u$ marks copies of the subgraph $H$.


Let us now study substructures of $H$ that will be necessary for the encoding.
Assume that the block graph of $H$ has diameter $h$.
The main observation we exploit is that all copies of $H$ which are incident to the root vertex of $c^{\bullet}$ are contained in the $h$-root block of $c^{\bullet}$.
Let $\mathcal{H}_0=\{H_1,\dots, H_s\}$ be all the connected subgraphs spanned by subsets of blocks of $H$.
For a given $H_i\in \mathcal{H}_0$ we denote by $\overline{H_i}$ the set of blocks in $H$ not contained in $H_i$.
Given $H_i\in \mathcal{H}_0$ we say that a vertex $v$ in $H_i$ is a \emph{virtual cut-vertex} if it is either a cut-vertex in $H_i$, or when we embed $H_i$ in $H$, the resulting vertex becomes a cut-vertex in the ambient graph $H$.
See Figure \ref{subsubgraph} for an example of a subgraph with four virtual cut-vertices.

\begin{figure}[htb]
\begin{center}
\includegraphics[width=7.4 cm]{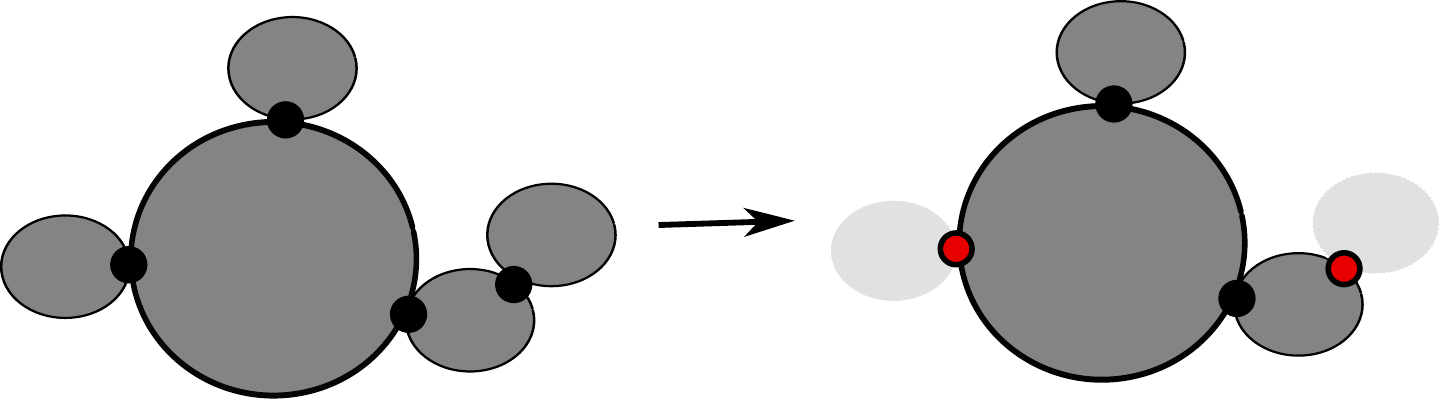}
\end{center}
\caption{The graph $H$ and a subgraph of $H$ with four virtual cut-vertices (two of them, in red, are not cut-vertices in the subgraph).}
\label{subsubgraph}
\end{figure}

We denote then by $\mathcal{H}$ the family of graphs constructed from the graphs in $\mathcal{H}_0$ by rooting one of its virtual cut-vertices. Let $d$ denote the cardinality of $\mathcal{H}$ and let $\mathcal{I} = \mathbb{N}^d$
be the set of $d$-dimensional indices $I$.
For every $I = (i_1,\ldots,i_d) \in \mathcal{I}$ (which we also call {\it profile}) we consider the combinatorial family $\mathcal{C}_{I}^{\bullet}$ (with exponential generating function  $C_{I}^{\bullet}(x,u)$) of rooted connected graphs in $\mathcal{C}^{\bullet}$ with $i_j$
copies of the $j$-th subgraph of $\mathcal{H}$, $1\le j\le d$, where the virtual cut-vertex coincides with the root vertex of the connected graph in $\mathcal{C}_{I}^{\bullet}$.
Similarly, we define $\mathcal{B}_{I}^{(h),\circ}$ the family of $h$-rooted blocks whose profile is equal to $I$.
Hence, $\mathcal{B}^{(h),\circ}= \bigcup_{I\in \mathcal{I}} \mathcal{B}_{I}^{(h),\circ}$.

Let $c^{\bullet}\in \mathcal{C}_{I}^{\bullet}$. There are three different types of copies of $H$ in $c^{\bullet}$:
\begin{enumerate}
\item [\textbf{Case $(b)$}] Copies of $H$ already existing on the $h$-root block of $c^{\bullet}$.

\item [\textbf{Case $(c)$}] Copies of $H$ already existing on the rooted connected graphs that we attach at the $h$-root block of $c^{\bullet}.$

\item [\textbf{Case $(bc)$}] Copies created by using some subgraph of $H$ from the $h$-root block of $c^{\bullet}$ and completing it to $H$ by using attached rooted connected graphs with convenient profiles.
\end{enumerate}
See Figure~\ref{graphic3} for an example of a subgraph $H$ with $h=2$, and three different copies of $H$ arising from these 3 different sources.
\begin{figure}[htb]
\begin{center}
\includegraphics[width=10.3 cm]{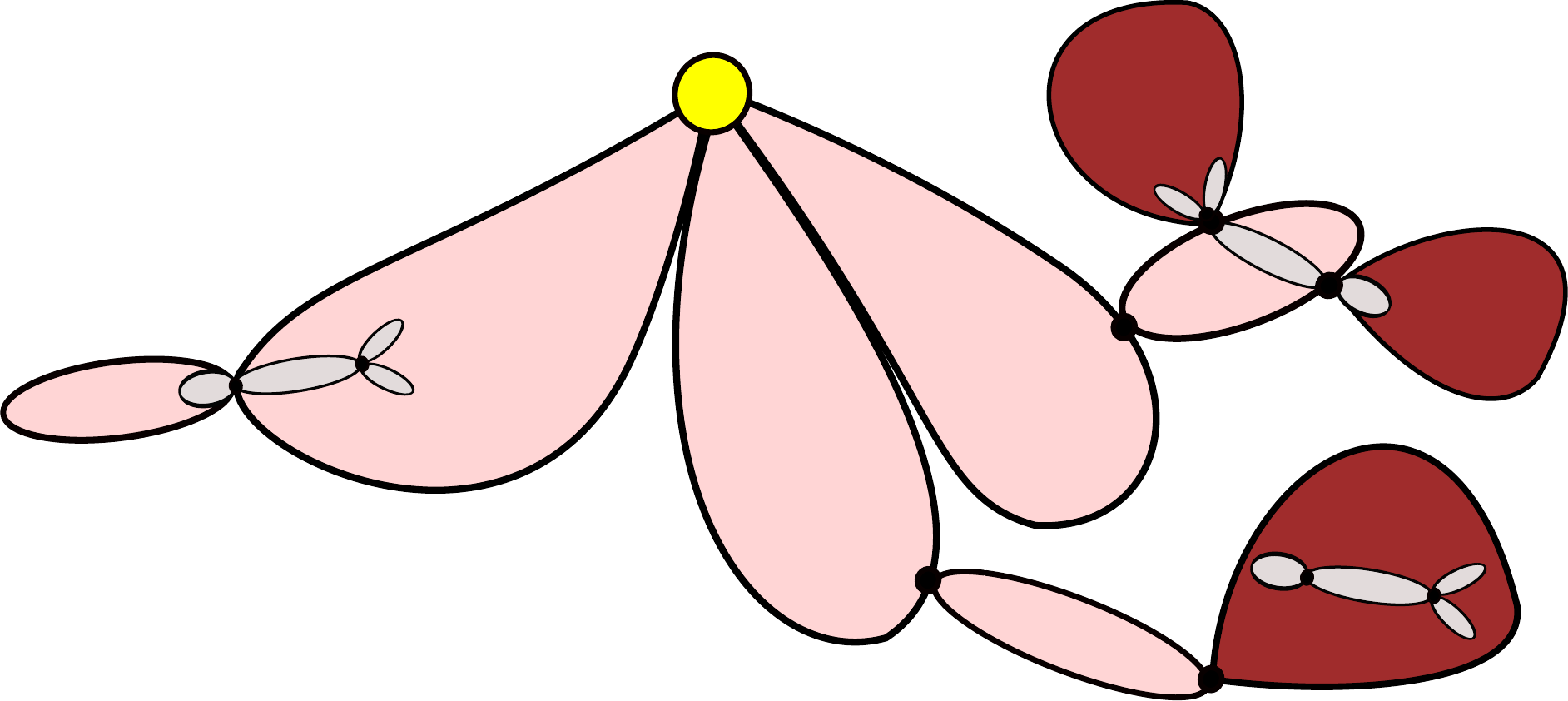}
\end{center}
\caption{ A generic copy of $H$ in the construction may arise from three different sources. The colour of the root block and the rooted connected graphs attached to it is different.}
\label{graphic3}
\end{figure}

We can now write an expression for $C^{\bullet}_{I}(x,u)$.
Let $b_h$ be a $h$-root block.
Denote by $|b_h|_2$ the number of vertices of $b_h$ on the $h$-level of the root block, and $|b_h|_1=|b_h|-|b_h|_2$.
We write $\textbf{I}(|b_h|_2)=(I_1,\dots,I_{|b_h|_2})$.
This set of profiles will be the ones of the rooted connected graphs that we will attach to each vertex of the $h$-level of the $h$-root block.
Also, when fixed $b_h$ and a set of profiles $\mathbf{I}(|b_h|_2)$, we write
\begin{itemize}
\item $G_1(b_h)$ for the number of copies of $H$ in Case $(b)$.
\item $G_2(b_h,\textbf{I}(|b_h|_2))$ for the number of copies of $H$ in Case $(bc)$.
\end{itemize}
Observe that both $G_1(b_h)$ and  $G_2(b_h,\textbf{I}(|b_h|_2))$  depend on the specific structure of $b_h$ (and also on the set of the profiles $\textbf{I}(|b_h|_2)$ in $G_2$).
With this terminology in mind, now it is easy to write an equation for $C^{\bullet}_{I}(x,u)$:
\begin{equation}\label{eq:Main}
C^{\bullet}_{I}(x,u)= x \sum_{b_{h}\in \mathcal{B}^{(h),\circ}_I} \sum_{\textbf{I}(|b_h|_2)} u^{G_1(b_h)+G_2(b_h,\textbf{I}(|b_h|_2))}\frac{x^{|b_h|_1}}{|b_h|_1 !}\prod_{i=1}^{|b_{h}|_2} C^{\bullet}_{I_i}(x,u),
\end{equation}
where the second sum is extended to all possible sets of $|b_h|$ profiles.
We are now ready to prove Theorem \ref{thm:main} by analyzing Equation \eqref{eq:Main}.
We write $\mathbf{y}=(C^{\bullet}_{I}(x,u))_{I}$, which is a solution to the infinite system of equations $y_I=F_I(x,\mathbf{y},u)$ with
$$
y_I=F_I(x,\mathbf{y},u)=x \sum_{b_{h}\in \mathcal{B}^{(h),\circ}_I} \sum_{\textbf{I}(|b_h|_2)}  u^{G_1(b_h)+G_2(b_h,\textbf{I}(|b_h|_2))} \frac{x^{|b_h|_1}}{|b_h|_1 !} \prod_{j=1}^{|b_{h}|_2} y_{I_j}.
$$
We can now check that this system of equations satisfies the conditions of Theorem \ref{Thanalytic2}.
We may assume in all the analysis that all variables $y_I$, $x$ are positive.
Let us start with the partition property.
By writing $u=1$, we get that $F_I(x,\mathbf{y},1)$ is equal to
\begin{equation}\label{eq:Main}
F_I(x,\mathbf{y},1)= x \sum_{b_{h}\in \mathcal{B}^{(h),\circ}_I} \frac{x^{|b_h|_1}}{|b_h|_1 !}  \prod_{j=1}^{|b_{h}|_2}y_{I_j}= x B^{(h),\circ}_{I}\left(x,\sum_{J}y_J\right).
\end{equation}
Hence, $F_I(x,\mathbf{y},1)$ is equal to $x B^{(h),\circ}_{I}\left(x,\sum_{J}y_J\right)=\widetilde{F}_{I}\left(x,\sum_{J}y_J\right)$, and Condition~\eqref{eqinfsystemcond1} is satisfied.
Let us check now Condition ~\eqref{eqinfsystemcond2}. Observe that
$$\sum_{I}\widetilde{F}_{I}(x,y)=\sum_{I} x B^{(h),\circ}_{I}\left(x,y\right)= x B^{(h),\circ}(x,y)=F(x,y),$$
which is analytic in $x$ and $y$ due to the subcritical condition (recall that with this notation, $C^{\bullet}(x)=F(x,C^{\bullet}(x))$).
Also, the condition assuring that this system of equations is well defined and analytic in the functional space considered in Remark \ref{mainremark2} is satisfied by taking a sufficiently large (but bounded) number of derivatives of $\widetilde{F}_{I}(x,y)$ with respect to $y$. Again, by the subcritical condition all these derivatives are bounded and consequently, for each choice of $m\geq 1$
$$\sum_{{I}} \|{I}\|_1^m \widetilde{F}_I(x,y) \leq \left(xy\frac{\partial^2}{\partial x \partial y}\right)^{m f_1(|H|)} F(x,y),$$
for a certain function $f_1$ that only depends on the size of $|H|$ (and hence, it is bounded).
This fact finally proves the first part of the conditions.
Let us show now Condition \eqref{eqderivatives}. We only argue the case $r=1$, as the arguments for the second and the third derivatives are very similar (but much longer).
We will show that the terms can be bounded by a constant number of derivatives (depending on $|H|$) of an analytic function, hence the resulting value will be bounded as well.
We need first to bound the following derivative at $t=0$:
\begin{align*}\left|\frac{\partial }{\partial t}F_I(x,\mathbf{y},e^{it})\right|&=\left|x \frac{\partial }{\partial t}  \sum_{b_{h}\in \mathcal{B}^{(h),\circ}_I} \sum_{\textbf{I}(|b_h|_2)}  e^{it(G_1(b_h)+G_2(b_h,\textbf{I}(|b_h|_2)))} \frac{x^{|b_h|_1}}{|b_h|_1 !}  \prod_{j=1}^{|b_{h}|_2} y_{I_j}\right|\\
&= x  \sum_{b_{h}\in \mathcal{B}^{(h),\circ}_I} \sum_{\textbf{I}(|b_h|)}  (G_1(b_h)+G_2(b_h,\textbf{I}(|b_h|_2))) \frac{x^{|b_h|_1}}{|b_h|_1 !} \prod_{j=1}^{|b_{h}|} y_{I_j}
\end{align*}
Hence we have two different contributions, namely expressions $G_1(b_h)$ and $G_2(b_h,\textbf{I}(|b_h|_2))$.
Observe first that $G_1(b_h)$ counts the number of copies of $H$ in $b_h$, hence it is bounded by $O(|b_h|^{|H|})$. Consequently, we have the bound
$$x  \sum_{b_{h}\in \mathcal{B}^{(h),\circ}_I} \sum_{\textbf{I}(|b_h|)} G_1(b_h) \frac{x^{|b_h|_1}}{|b_h|_1 !}  \prod_{j=1}^{|b_{h}|} y_{I_j}\leq \left(x y\frac{\partial^2}{\partial x \partial y}\right)^{|H|} \widetilde{F}_I(x,y) $$
which is bounded, and hence
\begin{align*}
&x \sum_{I} \|I\|_1^m  \sum_{b_{h}\in \mathcal{B}^{(h),\circ}_I} \sum_{\textbf{I}(|b_h|_2)} G_1(b_h) \frac{x^{|b_h|_1}}{|b_h|_1 !}  \prod_{j=1}^{|b_{h}|} y_{I_j}\\
&\leq x \sum_{{I}} \|{I}\|_1^m \left(x y\frac{\partial^2}{\partial x \partial y}\right)^{|H|} \widetilde{F}_I(x,y) \leq x\left(y\frac{\partial^2}{\partial x \partial y}\right)^{mf_1(|H|)+|H|} F(x,y).
\end{align*}
It finally remains to study the contribution $G_2(b_h,\textbf{I}(|b_h|_2))$, which is the number of copies of $H$ created in Case $(bc)$.
We can obtain a bound for $G_2(b_h,\textbf{I}(|b_h|))$ by using that the size of $H$ is bounded by $|H|$.
Observe that any copy created in Case $(bc)$ (and hence counted by $G_2$) is obtained by taking a subgraph of $H$ in the $h$-root block, and completing it to $H$ by attaching at most $|H|$ substructures arising from pending connected graphs.
The number of subgraphs of $|H|$ in $b_h$ is bounded by $|b_h|^{f_2(|H|)}$, for a certain function $f_2$.
This means that
$$G_2(b_h,\textbf{I}(|b_h|_2))\leq |b_h|^{f_2(|H|)} \sum_{\ast} \|I_{j_1}\|_1 \dots \|I_{j_{|H|}}\|_1,$$
where the sum in the previous expression is extended to all subsets of size $|H|$ of $\{1,\dots, |b_h|\}$.
Observe that the total number of sum terms is bounded then by $|b_h|^{|H|}$.
Putting now all together we get the following:
\begin{align*}
& x  \sum_{b_{h}\in \mathcal{B}^{(h),\circ}_I} \sum_{\textbf{I}(|b_h|_2)} G_2(b_h;\textbf{I}(|b_h|_2))\frac{x^{|b_h|_1}}{|b_h|_1 !} \prod_{j=1}^{|b_{h}|_2} y_{I_j} \\
& \leq x \left(\sum_{I}\|I\|_1|y_I|\right)^{|H|}\sum_{b_{h}\in \mathcal{B}^{(h),\circ}_I} |b_h|^{f_2(|H|)} |b_h|^{|H|} \frac{x^{|b_h|_1}}{|b_h|_1 !}   \prod_{j=1}^{|b_h|} y_{I_j} \\
&\leq \left(\sum_{I}\|I\|_1|y_I|\right)^{|H|} \left(xy\frac{\partial^2}{\partial x \partial y}\right)^{f_2(|H|)+|H|} \widetilde{F}_I(x,y).
\end{align*}
By assumption, the sum $\sum_{I}\|I\|_1|y_I|$ is bounded, hence the previous term is bounded as well.
Finally we can get bounded expressions for the weighted sum with coefficients $\|I\|_1^m$, as we did when analyzing the function $G_1(b_h)$. This concludes the study for the first derivative. As mentioned, case $r=2$ and $r=3$ can be similarly handled and obtain similarly bounded expressions.

This concludes the proof of Theorem \ref{thm:main}.

\section{Computations}\label{sec:comp}

In this section we computationally analyze the statistics for some small subgraphs in series-parallel graphs (also written as SP graphs).
In particular, we compute the subgraph statistics for triangles in a uniformly at random 2-connected and connected SP graph of size $n$. We also compute the subgraph statistics for triangles in a uniformly at random 2-connected SP graph of size $n$,
which is technically different to the case of triangles, but
the connected case is analogous and we skip it.
Additionally, our methodology gives easily the asymptotic enumeration of SP graphs avoiding the subgraph under consideration.
In this prominent case, it is straightforward to apply Remark \ref{rem:sigma} in order to justify that the corresponding constant $\sigma_H^2>0$.
Hence, for all subgraphs in the connected level the second statement in the Main Theorem \ref{thm:main} will hold.

In order to analyze SP graphs we use a variant of Tutte's decomposition into 3-connected components, as depicted in~\cite{tutte1966}.
Recall that this strategy is used when a class of graphs satisfies that a graph belongs to the family if and only if its connected, 2-connected and 3-connected components also belong to, as it is the case in SP graphs.

As already mentioned, a connected graph is obtained from its tree decomposition into 2-connected blocks.
A 2-connected graph is decomposed into 3-connected graphs using networks to join the pieces.
In the case of SP graphs there are no 3-connected graphs, so we start with networks as the basic building blocks.
The key point here is that networks are easy enough to be built, so we can control the appearance of simple structures, like cycles.
If these structures are 2-connected then they can only appear inside 2-connected blocks, so if we count them at the 2-connected level then Tutte's decomposition gives the total number for general graphs.

In one of the steps of the decomposition we have to obtain a 2-connected graph from a network. In general, a network is obtained by picking an edge of the 2-connected graph and performing some minor corrections.
Therefore, in order to obtain a 2-connected graph from a network we have to 'forget' a root edge.
Since we can translate the action of rooting an edge in terms of generating functions as differentiating with respect to the variable
that counts edges, we can translate the opposite action (forgetting the root) as the integration with respect to the same variable.
This was done in~\cite{Gimenez2009} to obtain the generating function of 2-connected planar graphs.
However, we will use a more recent approach, purely combinatorial, defined following the ideas of the grammar developed in~\cite{Chapuy2008}.
This approach uses the so-called Dissymmetry Theorem for trees~\cite{BerLaLe98}.
This technique gives a bijection that relates unrooted trees and trees rooted in both a vertex and an edge, which is used to express the generating function of unrooted trees in terms of the
generating function of rooted trees.
In~\cite{Chapuy2008} the authors consider the decomposition of a 2-connected graph into networks.
Since the class is tree-decomposable, they show that the dissymmetry theorem
can be used to obtain the generating function of 2-connected graphs
in terms of the generating function of the networks, with no integration involved.

Note that in Section~\ref{sec:2-con} we already prove that the number of copies of a 2-connected subgraph in a connected graph is normally distributed.
In this section we prove the same for the 2-connected level in the particular class of SP graphs.
Moreover, we give exact computations of the parameters of the Gaussian laws by means of the Quasi-powers Theorem.

This section is divided in the following way: in subsection \ref{ss.triangles-SP} both the number of copies of triangles and series-parallel without triangles are studied. Later, in subsection \ref{ss.4-cycles-sp} the subgraph under study is the cycle of length four. Finally, in subsection \ref{ss.girth} equations to study series-parallel graphs with given girth are shown.

\subsection{Triangles in series-parallel graphs}\label{ss.triangles-SP}

Since there are no 3-connected SP graphs, we start by computing the generating functions $D^\blacktriangle(x,y,u)$ of networks, where $x,\, y$ mark vertices and edges, respectively. We add the additional parameter $u$ which is used to encode triangles.

Recall that a network is obtained from a 2-connected series-parallel graph by choosing and orienting an edge. It might not occur in the graph, and the vertices incident to it, the poles, are not labelled, but instead one of them is consider to be 0, and the other one is $\infty$.
For convenience we split both series and parallel generating functions as follows. We define $P^\blacktriangle_0:=P^\blacktriangle_0(x,y,u)$ as the generating function of parallel networks that do not contain an edge between the poles, whereas $P^\blacktriangle_1:=P^\blacktriangle_1(x,y,u)$ is the generating function of parallel networks where there is an edge connecting the poles.
For convenience, we include the network consisting of a single edge in $P^\blacktriangle_1$.

We define $S^\blacktriangle_2:=S^\blacktriangle_2(x,y,u)$ as the generating function of series networks where there is a path of length exactly 2 between the poles, or equivalently
where there exists a single cut vertex.
We also define $S^\blacktriangle_3:=S^\blacktriangle_3(x,y,u)$ as the remaining series networks. Namely, the ones where the graph distance between the poles is at
least 3.
The generating function $D^\blacktriangle:=D^\blacktriangle(x,y,u)$ can be expressed then as the solution of the following system of equations:
\begin{align} \label{eq:D}
D^\blacktriangle &= P^\blacktriangle_0+P^\blacktriangle_1+S^\blacktriangle_2+S^\blacktriangle_3\\\nonumber
P^\blacktriangle_0 &= \exp_{\geq 2} (S^\blacktriangle_2+S^\blacktriangle_3)\\\nonumber
P^\blacktriangle_1 &= y\exp (uS^\blacktriangle_2+S^\blacktriangle_3) \\\nonumber
S^\blacktriangle_2 &= x(P^\blacktriangle_1)^2\\\nonumber
S^\blacktriangle_3 &= xD^\blacktriangle P^\blacktriangle_0 + xP^\blacktriangle_1(P^\blacktriangle_0+S^\blacktriangle_2+S^\blacktriangle_3).\nonumber
\end{align}

A graph in $P^\blacktriangle_0$ is obtained as a set
of at least two
series graphs in parallel, since no series graph has
an edge between the poles. A graph in $P^\blacktriangle_1$
is obtained by putting a set of series graphs in parallel
with an edge. Note that each series graph that contains
a path of length 2 between the poles will produce a triangle.
A graph in $S^\blacktriangle_2$ has a single cut vertex,
and an edge joining it to both poles, which might be
in parallel with other series graphs, so we need two
copies of $P^\blacktriangle_1$.
A graph in $S^\blacktriangle_3$ has at least one cut vertex.
Let $x$ be the cut vertex closest to pole $0$. There are
two options: either $x$ is joined to pole $0$ by an edge,
and therefore by a graph in $P^\blacktriangle_1$,
or it is joined to pole $0$ by a graph in $P^\blacktriangle_0$.
In the former case, there cannot be and edge between
$x$ and pole $\infty$, so there must be a graph in
$P^\blacktriangle_0$, $S^\blacktriangle_2$ or
$S^\blacktriangle_3$ that joins $x$ and pole $\infty$.
In the latter case any network is possible, since the
distance between the poles will be at least 3.

From these equations we deduce that triangles can only come up from parallel constructions where the poles are connected by an
edge and at least one path of length 2.
The generating function $D^\blacktriangle(x,y,u)$ cannot be expressed in terms of elementary functions,
but we can obtain the first terms of its expansion near 0:
\begin{align*}
D^\blacktriangle(x,y,u) &=
y+ \left( {y}^{2}+u{y}^{3} \right) x+ \left( 2y^{3}+3y^{4}+4uy^{4}+5u^{2}y^{5}\right) {{x}^{2}\over 2!}\\
&+ \left( 6y^{4}+30y^{5}+7y^{6}+18uy^{5}+48uy^{6}+ 36u^2y^{6}+{49
}u^{3}y^{7}\right) {{x}^{3}\over 3!}+O(x^4).
\end{align*}
Now that we know $D^\blacktriangle$ and the auxiliary functions
$P^\blacktriangle_0$, $P^\blacktriangle_1$, $S^\blacktriangle_2$ and $S^\blacktriangle_3$, we can use the
Dissymmetry Theorem for trees in order to obtain the generating function $B^\blacktriangle(x,y,u)$
of 2-connected SP graphs, where $x,\, y,\, u$ marks
vertices, edges and triangles, respectively.
We will use the same approach as in~\cite{Chapuy2008}:
since the class of 2-connected SP graphs is tree-decomposable,
we can apply the following bijection.
$$
B^\blacktriangle ~ + ~
B^\blacktriangle_{\circ \rightarrow \circ}
\simeq B^\blacktriangle_{\circ} ~ + ~
B^\blacktriangle_{\circ - \circ},
$$
where $B^\blacktriangle_{\circ}$
represents the class of 2-connected SP graphs with
a distinguished vertex in the tree decomposition,
i.e., either a ring or a multiedge,
$B^\blacktriangle_{\circ - \circ}$
represents the class of 2-connected SP graphs with
a distinguished edge in the tree decomposition,
which must be incident to both a ring and a multiedge,
and $B^\blacktriangle_{\circ \rightarrow \circ}$
represents the class of 2-connected SP graphs with
a distinguished oriented edge in the tree decomposition.
This leads to the following expressions:
$$
B^\blacktriangle_R(x,y,u)= \text{Cyc}(x(P^\blacktriangle_0+P^\blacktriangle_1))+(u-1){(xP^\blacktriangle_1)^3\over 6}
$$
$$
B^\blacktriangle_M(x,y,u)= {x^2\over 2}
\left(y \exp_{\geq 2}(uS^\blacktriangle_2+S^\blacktriangle_3)+\exp_{\geq 3}(S^\blacktriangle_2+S^\blacktriangle_3)\right)
$$
$$
B^\blacktriangle_{MR}(x,y,u) = {x^2\over 2} \left((S^\blacktriangle_2+S^\blacktriangle_3)(P^\blacktriangle_0+P^\blacktriangle_1-y)+(u-1)(P^\blacktriangle_1-y)S^\blacktriangle_2\right),
$$
where $B^\blacktriangle_R$
represents the class of 2-connected SP graphs with
a distinguished ring in the tree decomposition,
$B^\blacktriangle_M$ represents the class of 2-connected SP graphs with
a distinguished multiedge in the tree decomposition,
and $B^\blacktriangle_{MR}$
represents the class of 2-connected SP graphs with
a distinguished pair of incident ring and multiedge.
In the case of $B^\blacktriangle_R$ we have to consider
the special case where the ring is of length three, and
the parallel networks that replace the edges of the ring
are of the kind $P^\blacktriangle_1$, since this
generates a new triangle, as it is shown in Figure~\ref{graphicring}.
In the case of $B^\blacktriangle_M$ we distinguish 2 cases,
depending on whether one of the edges of the multiedge
is not replaced with a series network, but with an edge,
since this generates a new triangle for every other edge
replaced with a series network in $S^\blacktriangle_2$.
In the case of $B^\blacktriangle_{MR}$ we have to take
into account the special situation where both conditions
happen at the same time.

\begin{figure}[htb]
\begin{center}
\includegraphics[width=7cm]{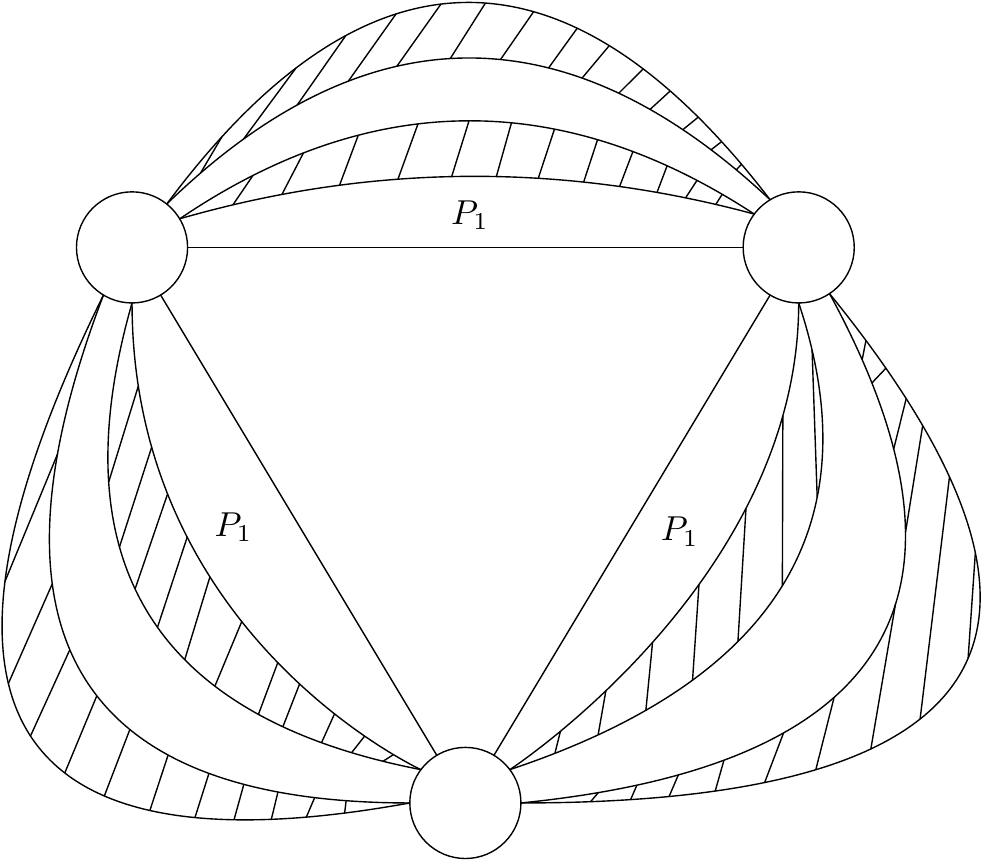}
\end{center}
\caption{ 2-connected series parallel graph rooted in a ring of length 3. The shaded regions plus the edges represent parallel networks of the kind $P^\blacktriangle_1$.}
\label{graphicring}
\end{figure}

Note that this is not a system of equations provided that we
know the values of $P^\blacktriangle_0$, $P^\blacktriangle_1$, $S^\blacktriangle_2$ and $S^\blacktriangle_3$. Finally, following \cite{Chapuy2008}, the generating function $B^\blacktriangle(x,y,u)$ is obtained as
\begin{equation}\label{eq:B_triangles}
B^\blacktriangle (x,y,u) =\frac{1}{2}x^2 y+ B^\blacktriangle_R(x,y,u)+B^\blacktriangle_M(x,y,u)-B^\blacktriangle_{MR}(x,y,u).
\end{equation}
In the last step we just compute the generating function $G^\blacktriangle(x,y,u)$ as the set of its connected components,
encoded as the exponential of $C^\blacktriangle(x,y,u)$, which at the same time can be obtained from the
decomposition into 2-connected components, encoded as $B^\blacktriangle (x,y,u)$, by a standard integration.
This determines the generating function $G^\blacktriangle(x,y,u)$ of SP graphs where $x$ counts vertices,
$y$ counts edges and $u$ counts triangles.

\subsubsection{Number of triangles}\label{sec:tri}
Now that we have the generating function $G^\blacktriangle(x,y,u)$,
by means of the Quasi-Powers Theorem (see~\cite{quasi-powers})
we can show that the number of triangles tends to a normal law,
and obtain its mean and variance. We will not consider
the number of edges any more, so we can assume that $y=1$.
In all this section all generating functions are evaluated at this point, and hence, we only use variable $x$ and $u$.

The first lemma gives the singularity type of the networks:

\begin{lemma}\label{lem:S3}
The generating function $S^\blacktriangle_3(x,u)$ of networks
where the distance between the poles is greater than 2
satisfies
$$
S^\blacktriangle_3(x,u)=g_3(x,u)-h_3(x,u)\sqrt{1-{x\over R^\blacktriangle(u)}},
$$
for functions $g_3(x,u)$ and $h_3(x,u)$ analytic in a neighbourhood of the point $(x,u)=(R,1)$, $R=R^\blacktriangle(1) \approx 0.12800$ , and where $R^\blacktriangle(u)$ is the
singularity curve of $S^\blacktriangle_3(x,u)$.
\end{lemma}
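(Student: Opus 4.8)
The plan is to treat the system \eqref{eq:D}, with $y$ set to $1$ so that every series is a function of $x$ and $u$ only, as a finite positive system of analytic functional equations and to invoke the classical Drmota--Lalley--Woods theorem for strongly connected positive systems (see \cite{drmota97systems, DrmotaBook} and the finite-system extension recorded in Remark~\ref{rem:system}). Since $D^\blacktriangle$ is the explicit linear combination $P^\blacktriangle_0+P^\blacktriangle_1+S^\blacktriangle_2+S^\blacktriangle_3$, I would first substitute it out of the last equation and work with the equivalent system $\mathbf{y}=\mathbf{F}(x,\mathbf{y},u)$ in the unknowns $\mathbf{y}=(P^\blacktriangle_0,P^\blacktriangle_1,S^\blacktriangle_2,S^\blacktriangle_3)$. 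The components of $\mathbf{F}$ are built only from $\exp$, $\exp_{\geq k}$, products, and multiplication by $x$; hence $\mathbf{F}$ is analytic in $(x,\mathbf{y},u)$ wherever the arguments are finite, and at $u=1$ it has non-negative Taylor coefficients, as required.

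Next I would verify strong connectivity of the dependency graph at $u=1$. Reading off which unknowns occur on each right-hand side yields the edges $P^\blacktriangle_0\to S^\blacktriangle_2,S^\blacktriangle_3$, $P^\blacktriangle_1\to S^\blacktriangle_2,S^\blacktriangle_3$, $S^\blacktriangle_2\to P^\blacktriangle_1$, and $S^\blacktriangle_3\to P^\blacktriangle_0,P^\blacktriangle_1,S^\blacktriangle_2,S^\blacktriangle_3$. Every unknown reaches $S^\blacktriangle_2$ (directly for $P^\blacktriangle_0,P^\blacktriangle_1,S^\blacktriangle_3$, and $S^\blacktriangle_2$ through $P^\blacktriangle_1$), while from $S^\blacktriangle_2$ the path $S^\blacktriangle_2\to P^\blacktriangle_1\to S^\blacktriangle_3\to P^\blacktriangle_0$ reaches all remaining unknowns, so the graph is strongly connected; aperiodicity is inherited from the underlying series-parallel class. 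With these hypotheses the system theorem gives a common dominant singularity $x=\rho$ of square-root type for every component, so in particular $S^\blacktriangle_3$ admits a local representation $g_3(x)-h_3(x)\sqrt{1-x/\rho}$ with $g_3,h_3$ analytic at $\rho$ and $h_3(\rho)>0$, produced by the Weierstrass preparation argument applied to the characteristic equation $\det\!\bigl(I-\mathbf{F}_{\mathbf{y}}(x,\mathbf{y},u)\bigr)=0$.

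To obtain the $u$-dependent statement I would keep $u$ as an analytic parameter ranging over a neighbourhood of $1$ and apply the analytic implicit function theorem to the augmented system formed by $\mathbf{y}=\mathbf{F}(x,\mathbf{y},u)$ together with the branch-point condition $\det(I-\mathbf{F}_{\mathbf{y}})=0$. Provided the transversality holds at $u=1$, this simultaneously solves for the singularity curve $x=R^\blacktriangle(u)$ and for the coefficient functions $g_3(x,u),h_3(x,u)$ as analytic functions of $(x,u)$ near $(R,1)$, giving exactly the stated form. The value $R=R^\blacktriangle(1)\approx 0.12800$ is then obtained by solving the system together with $\det(I-\mathbf{F}_{\mathbf{y}})=0$ numerically at $u=1$.

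I expect the main obstacle to be the analytic nondegeneracy needed to certify that the dominant singularity is genuinely of square-root type and is governed solely by the Jacobian degeneracy. Concretely, one must confirm that throughout the relevant $\Delta$-region no component leaves the domain of convergence of the exponential operators before $x$ reaches $\rho$ (so that no competing singularity arises from $\exp$ itself), and that the appropriate second-order term $\mathbf{F}_{\mathbf{y}\mathbf{y}}$ does not vanish, so that the Weierstrass factorization yields the exponent $1/2$ rather than a higher order. Checking positivity, aperiodicity, and strong connectivity is routine; pinning down these two transversality conditions, which is precisely what licenses the $\sqrt{1-x/R^\blacktriangle(u)}$ ansatz and the joint analyticity of $g_3$ and $h_3$, is where the real work lies.
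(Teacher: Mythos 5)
Your proposal is correct and takes essentially the same route as the paper: the paper likewise treats \eqref{eq:D} at $y=1$ as a finite positive analytic system and invokes the finite-system Drmota--Lalley--Woods machinery (via \cite[Theorem 2]{DrFuKaKrRu11}, a consequence of \cite[Theorem 2.33]{DrmotaBook}), the only difference being that it first eliminates $P^\blacktriangle_0$ and $P^\blacktriangle_1$ to reduce to a two-equation system in $(S^\blacktriangle_2,S^\blacktriangle_3)$ instead of keeping all four unknowns. The obstacles you flag at the end are resolved exactly as you anticipate: the right-hand sides are entire, so no competing singularity from the exponentials can occur, and the paper cites \cite{BoGiKaNo07} for uniqueness of the solution of the characteristic system at $u=1$ (giving $R\approx 0.12800$) together with aperiodicity to get a unique, square-root type singularity depending analytically on $u$.
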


\begin{proof}
We will use the techniques shown in~\cite{DrmotaBook}.
In particular, we will use~\cite[Theorem 2]{DrFuKaKrRu11}, which is a consequence of~\cite[Theorem 2.33]{DrmotaBook}.
First, we need to adapt the equations so that they satisfy the hypothesis of~\cite[Theorem 2]{DrFuKaKrRu11}.
The new equations are:
$$
S^\blacktriangle_2 = x \exp(2uS^\blacktriangle_2+2S^\blacktriangle_3) = F_1(x,S^\blacktriangle_2,S^\blacktriangle_3,u)
$$
$$
S^\blacktriangle_3 = x\left(\exp_{\geq 2}(S^\blacktriangle_2+S^\blacktriangle_3)
(\exp_{\geq 1}(S^\blacktriangle_2+S^\blacktriangle_3)+\exp(uS^\blacktriangle_2+S^\blacktriangle_3))+\right.
$$
$$
\left.
\exp(uS^\blacktriangle_2+S^\blacktriangle_3)\exp_{\geq 1}(S^\blacktriangle_2+S^\blacktriangle_3)\right)=F_2(x,S^\blacktriangle_2,S^\blacktriangle_3,u)
$$
Since the functions $F_1, F_2$ are analytic in the complex plane, they satisfy the hypothesis of~\cite[Theorem 2]{DrFuKaKrRu11}.
Moreover, in~\cite{BoGiKaNo07} the authors show that for $u=1$ the system has a unique solution, for which $x=R \approx 0.12800$.
Since the system is aperiodic, there is a unique singularity,
which implies the existence of a square-root expansion around $(x=R,u=1)$, which in particular implies the statement.
\end{proof}

By means of Equation~\eqref{eq:D}, all different network classes can be expressed in terms of both $S^\blacktriangle_3(x,u)$ and $S^\blacktriangle_2(x,u)$.
Hence, all network classes have a similar expression.
This observation makes the following lemma an straightforward result:

\begin{lemma}
The generating function $B^\blacktriangle(x,u)$ of 2-connected SP graphs where $x$ marks vertices and $u$ marks triangles satisfies
\begin{equation}\label{eq:B_tri}
B^\blacktriangle(x,u)=g_B(x,u)-h_B(x,u)\sqrt{1-{x\over R^\blacktriangle(u)}},
\end{equation}
where $g_B$ and $h_B$ are analytic in a neighbourhood of the point $(x,u)=(R,1)$, $R=R^\blacktriangle(1) \approx 0.12800$, and where $R^\blacktriangle(u)$ is the function described in Lemma~\ref{lem:S3}.
\end{lemma}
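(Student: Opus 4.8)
The plan is to propagate the square-root singularity established for $S_3^\blacktriangle$ in Lemma~\ref{lem:S3} through the entire network decomposition and finally through the dissymmetry-theorem expression for $B^\blacktriangle$. The main tool is the elementary but crucial closure observation: the class of bivariate functions of the form $a(x,u) - b(x,u)\sqrt{1 - x/R^\blacktriangle(u)}$, with $a,b,R^\blacktriangle$ analytic in a neighbourhood of $(R,1)$, is stable under addition, multiplication, and composition with any function analytic at the relevant point. Stability under sums and products is immediate once one uses $\left(\sqrt{1-x/R^\blacktriangle(u)}\right)^2 = 1 - x/R^\blacktriangle(u)$, which is analytic; stability under an outer analytic function $\Phi$ follows by Taylor-expanding $\Phi$ and collecting the even and odd powers of the radical. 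Consequently, any expression obtained from such functions by finitely many applications of $\exp$, $\exp_{\geq k}$, polynomial operations and the logarithm (provided its argument stays away from $1$) again has this form.

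First I would record that the proof of Lemma~\ref{lem:S3} treats $S_2^\blacktriangle$ and $S_3^\blacktriangle$ as a strongly connected system, so the Drmota--Lalley--Woods-type result \cite[Theorem 2]{DrFuKaKrRu11} yields a \emph{common} square-root singularity at $x = R^\blacktriangle(u)$ for both functions; thus $S_2^\blacktriangle$ also admits an expansion $g_2(x,u) - h_2(x,u)\sqrt{1 - x/R^\blacktriangle(u)}$ with $g_2,h_2$ analytic near $(R,1)$. Using the remaining equations in \eqref{eq:D}, namely $P_0^\blacktriangle = \exp_{\geq 2}(S_2^\blacktriangle + S_3^\blacktriangle)$, $P_1^\blacktriangle = y\exp(uS_2^\blacktriangle + S_3^\blacktriangle)$ and $D^\blacktriangle = P_0^\blacktriangle + P_1^\blacktriangle + S_2^\blacktriangle + S_3^\blacktriangle$, the closure property immediately gives the same singular form for every network class.

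Next I would feed these into the dissymmetry-theorem formula \eqref{eq:B_triangles}, that is $B^\blacktriangle = \tfrac12 x^2 y + B_R^\blacktriangle + B_M^\blacktriangle - B_{MR}^\blacktriangle$. The blocks $B_M^\blacktriangle$ and $B_{MR}^\blacktriangle$ are polynomial expressions in $x$, $y$, $u$ and the network functions composed with $\exp$ and $\exp_{\geq k}$, so they inherit the square-root form directly by the closure property. The only delicate term is $B_R^\blacktriangle$, whose cycle construction $\mathrm{Cyc}(x(P_0^\blacktriangle + P_1^\blacktriangle))$ contributes, according to Table~\ref{fig:dict}, a factor $-\tfrac12\log(1 - x(P_0^\blacktriangle + P_1^\blacktriangle))$ together with correction polynomials.

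The main obstacle is precisely this logarithm: I must check that $x(P_0^\blacktriangle + P_1^\blacktriangle) < 1$ at the dominant singularity $x = R^\blacktriangle(u)$ for $u$ in a neighbourhood of $1$, so that the argument of the logarithm stays bounded away from its own singularity at $1$ and $B_R^\blacktriangle$ remains an analytic function of the (square-root-type) network series rather than developing a logarithmic singularity of its own. This is the usual phenomenon that the block singularity is inherited from the networks and not created by the cycle or set construction, which reflects subcriticality; concretely it can be read off from the numerical value $R \approx 0.12800$ together with the finite values $P_0^\blacktriangle(R,1)$ and $P_1^\blacktriangle(R,1)$, and it persists for $u$ near $1$ by continuity of $R^\blacktriangle(u)$ from Lemma~\ref{lem:S3}. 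Once this inequality is verified, the closure property applies to the logarithm as well, each summand of \eqref{eq:B_triangles} has the form $a(x,u) - b(x,u)\sqrt{1 - x/R^\blacktriangle(u)}$, and collecting the analytic parts into $g_B$ and the coefficients of the radical into $h_B$ yields \eqref{eq:B_tri}, which completes the proof.
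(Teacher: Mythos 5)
Your proposal is correct and takes essentially the same approach the paper intends: the paper declares the lemma a ``straightforward result'' of the observation that, via Equation~\eqref{eq:D}, every network class is expressible in terms of $S_2^\blacktriangle$ and $S_3^\blacktriangle$ (which share the square-root singularity of Lemma~\ref{lem:S3}), after which the dissymmetry formula \eqref{eq:B_triangles} transfers that singular form to $B^\blacktriangle$. Your extra step of checking that $x(P_0^\blacktriangle+P_1^\blacktriangle)$ stays bounded away from $1$ at $x=R^\blacktriangle(u)$, so that the logarithm in $\mathrm{Cyc}$ does not create its own singularity, is precisely the detail the paper leaves implicit, and it is a worthwhile addition rather than a deviation.
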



As described in \cite{3-con}, the dominant singularity of both $C(x,u)$ and $G(x,u)$ arises from a branch point of the equation defining $C^{\bullet}(x,u)$ in terms of  $B^{\circ}(x,u)$. We write $\tau^{\blacktriangle}(u)$ the solution to the equation $\tau^{\blacktriangle}(u){B^{\circ}}^{'}(\tau^{\blacktriangle}(u),u)=1$. The singularity of $C(x,u)$ (and also $G(x,u)$) is located at $\rho^{\blacktriangle}(u)=\tau^{\blacktriangle}(u) \exp(-B^{\circ}(\tau^{\blacktriangle}(u),u))$.
Note that, since both $C(x,u)$ and $G(x,u)$ are aperiodic, the singularity is
unique.

Next step is to deduce from the previous lemmas the limiting distribution for the number of triangles. We already know that in the connected level this random variable follows a normal variable. In the next lemma we particularize the result in the case of 2-connected graphs in the family:

\begin{theorem}\label{th:number2}
The number of triangles $W_n^{\blacktriangle}$ of a uniformly at random 2-connected SP graph with $n$ vertices
is asymptotically normal distributed, with
$$
\mathbb{E}[W_n^{\blacktriangle}] = \mu_{\blacktriangle,2} n(1+o(1)),\qquad \mathbb{V}\mathrm{ar}[W_n^{\blacktriangle}] = \sigma_{\blacktriangle,2}^2 n(1+o(1)),
$$
where $\mu_{\blacktriangle,2} \approx 0.45242$ and $\sigma_{\blacktriangle,2}^{2} \approx 0.45997$.
\end{theorem}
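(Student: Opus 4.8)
The plan is to read off the limiting law of $W_n^\blacktriangle$ directly from the bivariate generating function $B^\blacktriangle(x,u)$ by means of the Quasi-Powers Theorem~\cite{quasi-powers}, exactly as one treats a square-root singularity with a moving singular point. Since a uniformly random $2$-connected SP graph of size $n$ is distributed according to $[x^n]B^\blacktriangle(x,1)$, the probability generating function of the number of triangles is
\[
\mathbb{E}\left[u^{W_n^\blacktriangle}\right] = \frac{[x^n]\,B^\blacktriangle(x,u)}{[x^n]\,B^\blacktriangle(x,1)}.
\]
The starting point is the singular representation \eqref{eq:B_tri}, namely $B^\blacktriangle(x,u) = g_B(x,u) - h_B(x,u)\sqrt{1 - x/R^\blacktriangle(u)}$ with $g_B,h_B,R^\blacktriangle$ analytic near $(R,1)$ and $h_B(R,1)>0$. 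Because the class is aperiodic, $x = R^\blacktriangle(u)$ is the unique dominant singularity on its circle of convergence for $u$ in a neighbourhood of $1$, so singularity analysis transfers uniformly in $u$ and gives
\[
[x^n]\,B^\blacktriangle(x,u) = \frac{h_B(R^\blacktriangle(u),u)}{2\sqrt{\pi}}\,R^\blacktriangle(u)^{-n}\,n^{-3/2}\left(1 + O(n^{-1})\right)
\]
uniformly for $u$ close to $1$.

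Forming the ratio then produces the quasi-powers shape $\mathbb{E}\left[u^{W_n^\blacktriangle}\right] = A(u)\,B(u)^n\left(1 + O(n^{-1})\right)$ with $A(u) = h_B(R^\blacktriangle(u),u)/h_B(R,1)$ and $B(u) = R^\blacktriangle(1)/R^\blacktriangle(u)$, so that $A(1)=B(1)=1$. The Quasi-Powers Theorem yields asymptotic normality together with $\mathbb{E}[W_n^\blacktriangle] = \mu_{\blacktriangle,2}\,n(1+o(1))$ and $\mathbb{V}\mathrm{ar}[W_n^\blacktriangle] = \sigma_{\blacktriangle,2}^2\,n(1+o(1))$, where the constants are the variability constants of $B(u)$ at $u=1$:
\[
\mu_{\blacktriangle,2} = \frac{B'(1)}{B(1)} = -\frac{{R^\blacktriangle}'(1)}{R^\blacktriangle(1)},\qquad \sigma_{\blacktriangle,2}^2 = \frac{B''(1)}{B(1)} + \mu_{\blacktriangle,2} - \mu_{\blacktriangle,2}^2.
\]
Once it is checked that $\sigma_{\blacktriangle,2}^2 \neq 0$, which the numerics confirm, the Gaussian limit law follows.

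It remains to determine $R^\blacktriangle(u)$ and its first two derivatives at $u=1$. Recall from the proof of Lemma~\ref{lem:S3} that $(S^\blacktriangle_2,S^\blacktriangle_3)$ solves the analytic system $y_1 = F_1(x,y_1,y_2,u)$, $y_2 = F_2(x,y_1,y_2,u)$, and $R^\blacktriangle(u)$ is the location of its branch point. Thus $x=R^\blacktriangle(u)$, together with the corresponding pole values $(s_2,s_3)$, is characterised by the three equations $y_1 = F_1$, $y_2 = F_2$ and the branch condition $\det\!\left(\mathrm{Id} - \partial(F_1,F_2)/\partial(y_1,y_2)\right) = 0$. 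Differentiating this system implicitly with respect to $u$ at $u=1$, seeded by the numerical value $R = R^\blacktriangle(1) \approx 0.12800$ taken from~\cite{BoGiKaNo07}, yields ${R^\blacktriangle}'(1)$ and ${R^\blacktriangle}''(1)$; substituting into the formulas above gives $\mu_{\blacktriangle,2} \approx 0.45242$ and $\sigma_{\blacktriangle,2}^2 \approx 0.45997$ after a routine computer-algebra simplification.

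The step I expect to be the main obstacle is justifying the uniformity of the singularity analysis in $u$, i.e.\ that the square-root expansion \eqref{eq:B_tri} persists with $g_B,h_B$ analytic and $R^\blacktriangle(u)$ analytic as $u$ moves near $1$, and that no competing singularity appears on the circle $|x| = R^\blacktriangle(u)$. This is precisely the content of the perturbed branch-point analysis of~\cite[Theorem 2]{DrFuKaKrRu11}: one verifies that the defining system is analytic and strongly connected and that the relevant second-order non-degeneracy holds at $(R,s_2,s_3,1)$, so the branch point moves analytically with $u$ and the expansion is uniform. Everything downstream, in particular the extraction of $\mu_{\blacktriangle,2}$ and $\sigma_{\blacktriangle,2}^2$, is then a mechanical though lengthy implicit-differentiation computation.
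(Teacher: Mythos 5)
Your proposal is correct and follows essentially the same route as the paper: both start from the uniform square-root expansion \eqref{eq:B_tri} (established upstream via the perturbed branch-point analysis of \cite[Theorem 2]{DrFuKaKrRu11} applied to the strongly connected network system for $S^\blacktriangle_2, S^\blacktriangle_3$), apply the Quasi-Powers Theorem to the ratio $[x^n]B^\blacktriangle(x,u)/[x^n]B^\blacktriangle(x,1)$, and extract $\mu_{\blacktriangle,2}$ and $\sigma_{\blacktriangle,2}^2$ from the first two derivatives of $R^\blacktriangle(u)$ at $u=1$, computed implicitly from the network equations together with the branch-point condition. Your formulas for the mean and variance agree exactly with the paper's, and your explicit attention to uniformity in $u$ is simply a more detailed spelling-out of what the paper delegates to Lemma~\ref{lem:S3}.
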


\begin{proof} To get the constant in the expectation and variance, we compute both ${R^\blacktriangle}'(u)$ and ${R^\blacktriangle}''(u)$ by means of the equations for networks. As both parameters
$$\mu_{\blacktriangle,2}=-\frac{{R^\blacktriangle}'(1)}{{R^\blacktriangle}(1)},\,\, \sigma_{\blacktriangle,2}^{2}=-\frac{{R^\blacktriangle}''(1)}{{R^\blacktriangle}(1)}-\frac{{R^\blacktriangle}'(1)}{{R^\blacktriangle}(1)}+\left(\frac{{R^\blacktriangle}'(1)}{{R^\blacktriangle}(1)}\right)^2.$$
are strictly greater than $0$, we can apply the Quasi-Powers Theorem over the expression in Equation~\eqref{eq:B_tri}, and the result holds straightforward.
\end{proof}

Finally, we are able to compute the number of triangles in a uniformly at random SP graph of size $n$.

\begin{theorem}\label{th:number}
The number of triangles $X_n^{\blacktriangle}$ of a SP graph with $n$ vertices
is asymptotically normal, with
$$
\mathbb{E}[X_n^{\blacktriangle}] = \mu_{\blacktriangle} n+O(1),\qquad \mathbb{V}\mathrm{ar}[X_n^{\blacktriangle}] = \sigma_{\blacktriangle}^2 n+O(1),
$$
where $\mu_{\blacktriangle} \approx 0.39481$ and $\sigma_{\blacktriangle}^{2} \approx 0.41450$.
\end{theorem}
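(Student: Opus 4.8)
The plan is to repeat the singularity analysis underlying Theorem~\ref{th:number2}, but now tracking the dominant singularity of the connected (equivalently, general) generating function rather than that of the $2$-connected one. By Remark~\ref{mainremark} the connected and general cases produce identical constants $\mu_\blacktriangle$ and $\sigma_\blacktriangle^2$, so I would work with $G^\blacktriangle(x,u)=\exp(C^\blacktriangle(x,u))$ and set $y=1$ throughout. The essential structural difference from the $2$-connected level is the \emph{location} of the singularity: whereas $B^\blacktriangle(x,u)$ is singular at the network singularity $R^\blacktriangle(u)$ (Lemma~\ref{lem:S3} and \eqref{eq:B_tri}), the function $C^\bullet(x,u)$ solving $C^\bullet=x\exp(B^\circ(C^\bullet,u))$ becomes singular at the branch point $\rho^\blacktriangle(u)=\tau^\blacktriangle(u)\exp(-B^\circ(\tau^\blacktriangle(u),u))$, where $\tau^\blacktriangle(u)$ solves $\tau\,{B^\circ}'(\tau,u)=1$. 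By the subcritical condition $\tau^\blacktriangle(u)<R^\blacktriangle(u)$ for $u$ near $1$, so $B^\circ(\cdot,u)$ is analytic at $\tau^\blacktriangle(u)$; hence $C^\bullet(x,u)$, and then $G^\blacktriangle(x,u)$ via $G=\exp(C)$, inherits a square-root, respectively $(1-x/\rho^\blacktriangle(u))^{3/2}$, type singularity exactly as recorded in Section~\ref{sec:subcritical}.

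Next I would establish analyticity of the singularity curve and invoke the Quasi-Powers Theorem. Since the branch-point equation $\tau\,{B^\circ}'(\tau,u)=1$ has at $u=1$ a simple solution with ${B^\circ}''(\tau^\blacktriangle(1),1)\neq 0$ (the square-root nature), the implicit function theorem guarantees that $\tau^\blacktriangle(u)$, and hence $\rho^\blacktriangle(u)$, are analytic in a neighbourhood of $u=1$. Singularity analysis, uniform in $u$ on $|u|=1$ near $1$ and with a unique dominant singularity by aperiodicity, then yields $[x^n]G^\blacktriangle(x,u)\sim K(u)\,n^{-5/2}\rho^\blacktriangle(u)^{-n}$ with $K(u)$ analytic and nonzero near $u=1$. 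Therefore
\[
\mathbb{E}[u^{X_n^\blacktriangle}]=\frac{[x^n]G^\blacktriangle(x,u)}{[x^n]G^\blacktriangle(x,1)}=\frac{K(u)}{K(1)}\left(\frac{\rho^\blacktriangle(1)}{\rho^\blacktriangle(u)}\right)^n\bigl(1+O(n^{-1})\bigr),
\]
and the Quasi-Powers Theorem gives asymptotic normality with
\[
\mu_\blacktriangle=-\frac{{\rho^\blacktriangle}'(1)}{\rho^\blacktriangle(1)},\qquad \sigma_\blacktriangle^2=-\frac{{\rho^\blacktriangle}''(1)}{\rho^\blacktriangle(1)}-\frac{{\rho^\blacktriangle}'(1)}{\rho^\blacktriangle(1)}+\left(\frac{{\rho^\blacktriangle}'(1)}{\rho^\blacktriangle(1)}\right)^2,
\]
the same structural formula as in Theorem~\ref{th:number2} but with $\rho^\blacktriangle$ in place of $R^\blacktriangle$.

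It then remains to evaluate ${\rho^\blacktriangle}'(1)$ and ${\rho^\blacktriangle}''(1)$, which I would obtain by a nested implicit differentiation. Differentiating $\tau\,{B^\circ}'(\tau,u)=1$ once and twice at $u=1$ expresses $\tau'(1),\tau''(1)$ through the partial derivatives of $B^\circ=\partial_x B^\blacktriangle$ at $(\tau^\blacktriangle(1),1)$; differentiating $\rho^\blacktriangle=\tau e^{-B^\circ(\tau,u)}$ then delivers ${\rho^\blacktriangle}'(1),{\rho^\blacktriangle}''(1)$. The required values and $u$-derivatives of $B^\circ$ at $u=1$ are themselves recovered by differentiating the network system \eqref{eq:D}, together with \eqref{eq:B_triangles}, with respect to $u$ and solving the resulting linear systems at the already-known point $x=R$. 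Substituting the numbers produces $\mu_\blacktriangle\approx 0.39481$ and $\sigma_\blacktriangle^2\approx 0.41450$; positivity of $\sigma_\blacktriangle^2$, and hence the Gaussian limit, follows as in the discussion opening this section by way of Remark~\ref{rem:sigma}.

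I expect the main obstacle to be purely computational: the two-layer implicit differentiation, in which $\tau^\blacktriangle(u)$ is itself implicitly defined and the coefficients depend on the non-elementary network functions $S_2^\blacktriangle,S_3^\blacktriangle$, makes the bookkeeping heavy even though each individual step is routine. The only genuinely conceptual point requiring care is to confirm that the dominant singularity of $G^\blacktriangle$ is the branch point $\rho^\blacktriangle(u)$ and not $R^\blacktriangle(u)$, that is, that subcriticality persists for $u$ near $1$ and the singularity analysis is uniform in $u$; this is secured by the strict inequality $\tau^\blacktriangle(1)<R^\blacktriangle(1)$ together with continuity.
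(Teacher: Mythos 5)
Your proposal is correct and takes essentially the same route as the paper: the paper's proof consists of applying \cite[Theorem 2.23]{DrmotaBook} (i.e.\ Theorem~\ref{Thanalytic1}) to the decomposition $C^{\bullet}=x\exp(B^{\circ}(C^{\bullet},u))$, and your branch-point/quasi-powers argument with $\tau^{\blacktriangle}(u)$ and $\rho^{\blacktriangle}(u)=\tau^{\blacktriangle}(u)e^{-B^{\circ}(\tau^{\blacktriangle}(u),u)}$ is exactly the content of that theorem, unpacked in the setup the paper itself introduces just before Theorem~\ref{th:number2}. One small correction to your computational outline: the linear systems obtained by differentiating the network system \eqref{eq:D} and \eqref{eq:B_triangles} with respect to $u$ must be solved at the branch point $x=\tau^{\blacktriangle}(1)$ (determined by $\tau\,{B^{\circ}}'(\tau,1)=1$ and strictly smaller than $R$, where the systems are non-singular), not ``at the already-known point $x=R$'' — this is consistent with your own earlier, correct statement that the partial derivatives of $B^{\circ}$ are needed at $(\tau^{\blacktriangle}(1),1)$, whereas evaluating at the network singularity $R$ would yield the wrong constants.
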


\begin{proof}
The normality of the random variable is assured by the fact that SP graphs are subcritical, which implies that we can apply
\cite[Theorem 2.23]{DrmotaBook}
to the decomposition of a connected graph into 2-connected blocks:
$$
C^{\bullet} = F (x, C^{\bullet}, u) = x \exp (B^\circ(C^\bullet,u)),
$$
and we obtain $\mathbb{E}[X_n^{\blacktriangle}]$ and
$\mathbb{V}\mathrm{ar}[X_n^{\blacktriangle}]$ from the explicit expression
of $F$ in terms of $B^\circ$.
\end{proof}

Observe the same limiting distribution holds for the number of triangles in a uniformly at random (general) SP graph on $n$ vertices.

One may compare these values with the expected number of pending triangles in a random SP graph, computed in \cite{3-con} as approximately
$2.2313\cdot 10 ^ {-3}n$. As expected, the number of appearances of a triangle in a random SP graph is much smaller than the
number of occurrences of the triangle in a random SP graph.

\subsubsection{Enumeration of triangle-free series-parallel graphs}\label{sec:enum}

If we write $u=0$ in the equations of the previous section we get the generating function of triangle-free SP graphs.
In this subsection we provide the asymptotic analysis of such family, which is interesting by itself.

In all this section we use the equations in the introduction of Section \ref{ss.triangles-SP} with the value $u=0$.
In order to emphasize that we are considering triangle-free families, we use the superindex $\triangle$ instead of $\blacktriangle$.
We start studying the singular behaviour for networks.

\begin{lemma}
Fix $y$ in an small neighbourhood of 1. The generating function $S^{\triangle}_3(x,y)$ of triangle-free series networks
where the poles are at a distance greater than 2 has a positive singularity $R^{\triangle}(y)$, and the following singular expansion in a dented domain $\Delta$ at a certain $x=R_{\triangle}(y)$:
$$
S^{\triangle}_3(x,y) = a^{\triangle}_0(y) + a^{\triangle}_1(y)X+a^{\triangle}_2(y)X^2+a^{\triangle}_3(y)X^3+O(X^4),
$$
where $X = \sqrt{1-x/R_{\triangle}(y)}$. In particular, $R_{\triangle}(1)\approx 0.19635$.
\end{lemma}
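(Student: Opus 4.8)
The plan is to mirror the proof of Lemma~\ref{lem:S3}, now fixing $u=0$ and keeping $y$ as a free parameter near $1$. First I would substitute $u=0$ into the network system~\eqref{eq:D} and eliminate $D^\triangle$, $P^\triangle_0$ and $P^\triangle_1$ in order to obtain a self-contained system for the pair $(S^\triangle_2,S^\triangle_3)$. At $u=0$ one has $P^\triangle_1=y\exp(S^\triangle_3)$, and a short computation (as in Lemma~\ref{lem:S3}, using $S^\triangle_2+S^\triangle_3=xD^\triangle(P^\triangle_0+P^\triangle_1)$) gives the finite analytic system
\begin{align*}
S^\triangle_2 &= x y^2 \exp(2S^\triangle_3) = F_1(x,S^\triangle_2,S^\triangle_3,y),\\
S^\triangle_3 &= x\Bigl(\exp_{\geq 2}(S^\triangle_2+S^\triangle_3)\bigl(\exp_{\geq 1}(S^\triangle_2+S^\triangle_3)+y\exp(S^\triangle_3)\bigr)+y\exp(S^\triangle_3)\exp_{\geq 1}(S^\triangle_2+S^\triangle_3)\Bigr) = F_2(x,S^\triangle_2,S^\triangle_3,y),
\end{align*}
which is exactly the $u=0$ specialisation of the system $F_1,F_2$ used in Lemma~\ref{lem:S3}. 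I would then apply \cite[Theorem~2]{DrFuKaKrRu11} (itself a consequence of the Drmota--Lalley--Woods theorem \cite[Theorem~2.33]{DrmotaBook}).

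Next I would check the hypotheses. The functions $F_1,F_2$ are entire in the unknowns $(S^\triangle_2,S^\triangle_3)$, being composed of $\exp$ and the truncated exponentials $\exp_{\geq 1},\exp_{\geq 2}$, and their power series have non-negative coefficients, since setting $u=0$ merely discards the configurations that would create a triangle while the remaining series still enumerate genuine (triangle-free) objects. The system is strongly connected: $F_1$ depends on $S^\triangle_3$ and $F_2$ depends on both $S^\triangle_2$ and $S^\triangle_3$, so the dependency digraph contains the cycle $S^\triangle_2\to S^\triangle_3\to S^\triangle_2$. Aperiodicity is inherited from the class, as in~\cite{BoGiKaNo07}. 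Because $F_1,F_2$ are entire in the unknowns, the solution cannot be continued past its radius of convergence except through a branch point; hence at $y=1$ the dominant singularity is necessarily a positive point where the characteristic equation $\det(\mathrm{Id}-J)=0$ holds, $J$ being the Jacobian of $(F_1,F_2)$ with respect to $(S^\triangle_2,S^\triangle_3)$, and such a point exists because the counting series have finite radius of convergence.

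The theorem then yields, for each fixed $y$ near $1$, a square-root singularity of $S^\triangle_3(x,y)$ at $x=R_\triangle(y)$ of the form $g_3(x,y)-h_3(x,y)\sqrt{1-x/R_\triangle(y)}$ with $g_3,h_3$ analytic. Writing $x=R_\triangle(y)(1-X^2)$ and expanding the analytic factors $g_3,h_3$ in $X=\sqrt{1-x/R_\triangle(y)}$ produces a full Puiseux series, and reading off its first four terms gives the claimed expansion $a^\triangle_0(y)+a^\triangle_1(y)X+a^\triangle_2(y)X^2+a^\triangle_3(y)X^3+O(X^4)$. Aperiodicity guarantees that $R_\triangle(y)$ is the unique dominant singularity on its circle of convergence, which permits the analytic continuation to a dented domain $\Delta$. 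Applying the implicit function theorem to the characteristic system $\{S^\triangle_2=F_1,\ S^\triangle_3=F_2,\ \det(\mathrm{Id}-J)=0\}$ shows that $R_\triangle(y)$ and the coefficients $a^\triangle_i(y)$ depend analytically on $y$ near $1$; solving this system numerically at $y=1$, $u=0$ gives $R_\triangle(1)\approx 0.19635$, larger than the value $\approx 0.12800$ for the class with triangles, as one expects.

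The main obstacle I anticipate is the reduction step together with confirming that the dominant singularity genuinely arises from the system as a branch point: one must locate the positive solution of the characteristic system and verify that it is reached before any competing singularity and that it lies in the region where the expansion is valid. Once strong connectivity and this branch-point structure are in place, the square-root expansion, the uniqueness of the dominant singularity, and the analytic dependence on $y$ all follow from \cite[Theorem~2]{DrFuKaKrRu11} exactly as in the proof of Lemma~\ref{lem:S3}.
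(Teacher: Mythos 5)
Your proposal is correct, but it follows a genuinely different route from the paper's own proof of this lemma. You keep the pair $(S^{\triangle}_2,S^{\triangle}_3)$ as a two-dimensional strongly connected system and invoke the system version of the Drmota--Lalley--Woods theorem, i.e.\ \cite[Theorem 2]{DrFuKaKrRu11}, exactly mirroring the proof of Lemma~\ref{lem:S3} for the $u$-marked triangle case. The paper instead exploits the fact that at $u=0$ the equation for $S^{\triangle}_2$ becomes \emph{explicit}, $S^{\triangle}_2=xy^2e^{2S^{\triangle}_3}$, with no occurrence of $S^{\triangle}_2$ on the right-hand side; substituting this into the equation for $S^{\triangle}_3$ collapses everything to a \emph{single} implicit equation $S^{\triangle}_3=G(S^{\triangle}_3,x,y)$ (Equation~\eqref{eq:S3}), to which the smooth implicit-function schema of Meir and Moon \cite{MeirMoon89} (see also \cite[Section VII.4.1]{fs05}) is applied. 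The single-equation reduction buys concreteness: the hypotheses become elementary coefficient checks (e.g.\ $g_{1,2}(y)=2y>0$), and the characteristic system involves only the two scalar unknowns $(a^{\triangle}_0(y),R_{\triangle}(y))$, solved numerically at $y=1$ to give $R_{\triangle}(1)\approx 0.19635$. Your system route buys uniformity with Lemma~\ref{lem:S3} and avoids the substitution, at the cost of invoking the heavier system theorem and of having to verify strong connectivity --- which, as you correctly note, still holds at $u=0$, but only barely, through the two-cycle between $S^{\triangle}_2$ and $S^{\triangle}_3$, since $F_1$ no longer depends on $S^{\triangle}_2$. Your side remarks also check out: the elimination identity $S^{\triangle}_2+S^{\triangle}_3=xD^{\triangle}(P^{\triangle}_0+P^{\triangle}_1)$ is a correct consequence of the system \eqref{eq:D}, and your $F_2$ agrees with the paper's $G$ before substitution. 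Both routes lead to the same characteristic system, the same square-root (hence Puiseux-in-$X$) expansion on a dented domain, and the same analytic dependence of $R_{\triangle}(y)$ and the $a^{\triangle}_i(y)$ on $y$ via the implicit function theorem, so the two proofs are interchangeable here; the paper's is simply the more economical one once the explicit form of $S^{\triangle}_2$ at $u=0$ is noticed.
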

\begin{proof}
First, note that if we assign $u=0$ in the equations defining the networks in Equation~\eqref{eq:D}, then we can express $S^{\triangle}_3$ as the solution of the following single implicit equation:
\begin{equation}\label{eq:S3}
S^{\triangle}_3=x \left( \left( \exp_{\geq 2}(x{y}^{2}{{\rm e}^{2{S^{\triangle}_3}}}+{S^{\triangle}_3})\right)  \left({\exp_{\geq 1}({x
{y}^{2}{{\rm e}^{2\,{\it S^{\triangle}_3}}}+{\it S^{\triangle}_3}})}+y{{\rm e}^{{\it S^{\triangle}_3}}}
 \right) +y{{\rm e}^{{\it S^{\triangle}_3}}} {\exp_{\geq 1}({x{y}^{2}{{\rm e}^{2\,
{\it S^{\triangle}_3}}}+{\it S^{\triangle}_3}})} \right).
\end{equation}
We write the right hand side of Equation \eqref{eq:S3} as $G(S^{\triangle}_3,x,y)$. Then, for every choice of $y$ in a neighbourhood of $1$,  we need to check that $S^{\triangle}_3$ satisfies a so-called \emph{smooth implicit-function scheme} (see the work of Meir and Moon \cite{MeirMoon89}, se also \cite[Section VII. 4.1.]{fs05}) of the form $S^{\triangle}_3=G(S^{\triangle}_3,x,y)$. In this context, if $G$ verifies some analytic conditions, then the solution $S^{\triangle}_3(x,y_0)$ of the equation admits an square root expansion in a domain dented at its singularity. We now check the conditions:
\begin{enumerate}
\item $G(U,x,y)$ must be analytic in a given complex region. In our case it is an entire function.
\item The coefficients $g_{m,n}(y)$ of the Taylor expansion of $G(U,x,y)$ with respect to $U$ and $x$ must be non-negative, as it is the case. Moreover  $g_{0,0}(y)=0$, and $g_{0,1}(y) = 0 \ne 1$.
\item $g_{m,n}(y)$ must be positive for some $m$ and some $n\geq 2$.
Since $g_{1,2}(y)=2y$, this holds for any $y$ in an small neighbourhood of $1$.
\item The singularity must be unique, which is true since the generating function
is aperiodic.
\item Finally, for each choice of $y$ in an small neighbourhood of $1$,  we need the existence of a solution $R_{\triangle}(y)$ and $a_0^{\triangle}(y)$ satisfying the characteristic system
\begin{equation}\label{eq:sing-triagn}
a^{\triangle}(y)= G(a_0^{\triangle}(y),R_{\triangle}(y),y),\,\, 1 = G_U(a_0^{\triangle}(y),R_{\triangle}(y),y).
\end{equation}
Direct computations for $y=1$ gives that such system of equations has a valid solution at $a^{\triangle}(1)\approx 0.15545 $ and $R_{\triangle} \approx 0.19635$. Finally, this statement is also true in an small neighbourhood of $y=1$ by the fact that both equations in system
\eqref{eq:sing-triagn}.
\end{enumerate}

In conclusion, the implicit-function scheme $S^{\triangle}_3=G(x,S^{\triangle}_3,y)$ is smooth for $y=1$, by continuity of the equations it is also smooth for $y$ in an small neighbourhood of $y$. Hence for each choice of $y$ in an small neighbourhood of $1$,  $S^{\triangle}_3$ admits a square-root expansion in a domain dented at $R_{\triangle}(y)$, as we wanted to show.
\end{proof}

Once we have the singularity behaviour of $S^{\triangle}_3(x,y)$ fore $y$ close enough to $1$, we can compute the coefficients of its singular expansion at a given value of $y=1$. This computation is enclosed in the following lemma.

\begin{lemma}
We have that the coefficients on the singular expansion of $S^{\triangle}_3(x,1)$ in a domain dented at $x=R_{\triangle}(1) \approx 0.19635$ are equal to:
\begin{eqnarray}\label{eq:sing_S3_triang}
a^{\triangle}_0(1) = a^{\triangle}_0 \approx 0.15545,& a^{\triangle}_1(1) = a^{\triangle}_1 \approx -0.34792,\\
a^{\triangle}_2(1) = a^{\triangle}_2 \approx 0.27799,& a^{\triangle}_3(1) = a^{\triangle}_3 \approx -0.16276.\nonumber
\end{eqnarray}
In particular, $a^{\triangle}_i(y)\neq 0$ for $i\in \{0,\ldots,3\}$ and $y$ in an small neighbourhood of $1$.
\end{lemma}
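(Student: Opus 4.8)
The plan is to determine the coefficients $a^{\triangle}_i(1)$ by the classical coefficient-matching procedure for smooth implicit-function schemes, now that the previous lemma guarantees that the singular expansion in $X=\sqrt{1-x/R_{\triangle}(y)}$ actually exists. I fix $y=1$, write $s=S^{\triangle}_3(x,1)$ and $G(U,x)=G(U,x,1)$ for the entire function appearing on the right-hand side of \eqref{eq:S3}, and substitute the ansatz $s=\sum_{i\ge 0}a^{\triangle}_i X^i$ together with the relation $x=R_{\triangle}(1-X^2)$ into the fixed-point equation $s=G(s,x)$. Taylor-expanding $G$ around the critical point $(a^{\triangle}_0,R_{\triangle})$ and comparing equal powers of $X$ then yields the coefficients one after another.

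First I recall from the characteristic system \eqref{eq:sing-triagn} that $R_{\triangle}$ and $a^{\triangle}_0=a^{\triangle}_0(1)\approx 0.15545$ are already known, and that they satisfy $G(a^{\triangle}_0,R_{\triangle})=a^{\triangle}_0$ and $G_U(a^{\triangle}_0,R_{\triangle})=1$. Writing $\delta_s=s-a^{\triangle}_0=a^{\triangle}_1 X+a^{\triangle}_2 X^2+\cdots$ and $\delta_x=x-R_{\triangle}=-R_{\triangle}X^2$, the equation $s=G(s,x)$ reduces, after cancelling the contributions of $G(a^{\triangle}_0,R_{\triangle})=a^{\triangle}_0$ and of $G_U=1$, to
$$
0=\sum_{(j,k)\neq (0,0),(1,0)}\frac{1}{j!\,k!}\,\frac{\partial^{\,j+k}G}{\partial U^{j}\partial x^{k}}(a^{\triangle}_0,R_{\triangle})\,\delta_s^{\,j}\delta_x^{\,k}.
$$
Since a term indexed by $(j,k)$ has lowest order $X^{j+2k}$, the identities at orders $X^0$ and $X^1$ are automatic; the identity at order $X^2$ gives $\tfrac12 G_{UU}(a^{\triangle}_1)^2=R_{\triangle}G_x$; the identity at order $X^3$, which involves only $a^{\triangle}_1$ and $a^{\triangle}_2$, determines $a^{\triangle}_2$; and the identity at order $X^4$ determines $a^{\triangle}_3$. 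In particular
$$
a^{\triangle}_1=-\sqrt{\frac{2R_{\triangle}G_x}{G_{UU}}},
$$
where the negative branch is forced because $S^{\triangle}_3(x,1)$ has non-negative coefficients and hence approaches $a^{\triangle}_0$ from below as $X\downarrow 0$; the coefficients $a^{\triangle}_2$ and $a^{\triangle}_3$ are then obtained from linear equations whose data are the partial derivatives of $G$ at $(a^{\triangle}_0,R_{\triangle})$ up to total order four.

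Because $G$ is entire, all these partial derivatives are finite and may be evaluated explicitly; inserting the numerical values $a^{\triangle}_0\approx 0.15545$ and $R_{\triangle}\approx 0.19635$ into the closed forms above produces the stated numbers for $a^{\triangle}_0,\ldots,a^{\triangle}_3$. For the final assertion, I note that the previous lemma supplies the same expansion uniformly for $y$ in a neighbourhood of $1$, and the solutions $(a^{\triangle}_0(y),R_{\triangle}(y))$ of \eqref{eq:sing-triagn} depend analytically on $y$ there, so each $a^{\triangle}_i(y)$ is analytic near $y=1$; since $a^{\triangle}_i(1)\neq 0$ for $i\in\{0,\ldots,3\}$, continuity forces $a^{\triangle}_i(y)\neq 0$ on a possibly smaller neighbourhood of $y=1$. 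The only real difficulty is computational: the fourth-order derivatives of the explicit $G$ in \eqref{eq:S3} are cumbersome and the bookkeeping of the power-matching must be carried out with care, but conceptually nothing beyond the expansion furnished by the previous lemma is required.
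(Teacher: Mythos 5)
Your proposal is correct and follows essentially the same route as the paper: the paper's proof is simply to apply indeterminate coefficients to Equation~\eqref{eq:S3} (which is exactly your power-matching of the ansatz $s=\sum_i a^{\triangle}_i X^i$, $x=R_{\triangle}(1-X^2)$ around the critical point) and to deduce the nonvanishing near $y=1$ by continuity of the $a^{\triangle}_i(y)$, as you do. Your write-up just spells out in detail the bookkeeping that the paper leaves implicit.
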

\begin{proof}
We just apply undeterminate coefficients over Equation~\eqref{eq:S3}. By continuity of the functions $a^{\triangle}_i(y)$, the final statement holds as well.
\end{proof}
By using this singular expansion for $S^{\triangle}_3(x,1)$ we can obtain the corresponding coefficients of the singular expansion of the rest of the networks counting formulas:
\begin{lemma}
The generating functions $P^{\triangle}_0$, $P^{\triangle}_1$, $S^{\triangle}_2$ and $D^{\triangle}$ have the following singular expansions in a domain dented at $x=R_{\triangle}(y)$:
\begin{eqnarray}
P^{\triangle}_0 &=& p^{\triangle}_0(y_0)+p^{\triangle}_1(y_0) X +p^{\triangle}_2(y_0)X^2+p^{\triangle}_3(y_0)X^3+O(X^4)\\
P^{\triangle}_1 &=& q^{\triangle}_0(y_0)+q^{\triangle}_1(y_0) X +q^{\triangle}_2(y_0)X^2+q^{\triangle}_3(y_0)X^3+O(X^4) \nonumber \\
S^{\triangle}_2 &=& s^{\triangle}_0(y_0)+s^{\triangle}_1(y_0) X +s^{\triangle}_2(y_0)X^2+s^{\triangle}_3(y_0)X^3+O(X^4) \nonumber \\
D^{\triangle}   &=& d^{\triangle}_0(y_0)+d^{\triangle}_1(y_0) X +d^{\triangle}_2(y_0)X^2+d^{\triangle}_3(y_0)X^3+O(X^4), \nonumber
\end{eqnarray}
where $X = \sqrt{1-x/R_{\triangle}(y)}$. In particular, when $y=1$ and $R_{\triangle}(1)\approx 0.19635$ we have that
$$p^{\triangle}_0(1)\approx 0.10374,\,\,   p^{\triangle}_1(1)\approx -0.28169,\,\,  p^{\triangle}_2(1)\approx 0.33606,\,\,       p^{\triangle}_{3}(1) \approx -0.31761, $$
$$q^{\triangle}_0(1)\approx 1.16818, \,\,  q^{\triangle}_1(1)\approx -0.40643,\,\,  q^{\triangle}_2(1)\approx 0.39544,\,\,   q^{\triangle}_3(1)\approx -0.31132,$$
$$d^{\triangle}_0(1)\approx 1.69532,\,\,   d^{\triangle}_1(1)\approx -1.22249,\,\,  d^{\triangle}_2(1)\approx 0.95538,\,\,   d^{\triangle}_3(1)\approx -0.81117,$$
$$s^{\triangle}_0(1)\approx 0.26795, \,\,  s^{\triangle}_1(1)\approx -0.18645,\,\,  s^{\triangle}_2(1)\approx -0.05411,\,\,  s^{\triangle}_3(1)\approx -0.01948,$$
Additionally, all the terms are different to 0 when $y$ belongs to an small neighbourhood of $1$.
\end{lemma}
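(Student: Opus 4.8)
The plan is to exploit the fact that, once the parameter $u$ is set to $0$, each of the four generating functions in the statement becomes an \emph{explicit} entire-function transform of the single series $S^{\triangle}_3(x,y)$, whose square-root expansion was established in the previous lemma. Indeed, reading off Equation~\eqref{eq:D} at $u=0$ gives $P^{\triangle}_1 = y\exp(S^{\triangle}_3)$, then $S^{\triangle}_2 = x(P^{\triangle}_1)^2 = xy^2\exp(2S^{\triangle}_3)$, then $P^{\triangle}_0 = \exp_{\geq 2}(S^{\triangle}_2+S^{\triangle}_3)$, and finally $D^{\triangle} = P^{\triangle}_0 + P^{\triangle}_1 + S^{\triangle}_2 + S^{\triangle}_3$. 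Thus every function on the list is obtained from $S^{\triangle}_3$ by the entire operations $\exp$, squaring and $\exp_{\geq 2}$, together with multiplication by the polynomial $x$ and addition.

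The key analytic step is the standard closure property of singular expansions: if $g(x)$ admits an expansion in integer powers of $X=\sqrt{1-x/R_{\triangle}(y)}$ in a dented domain at $R_{\triangle}(y)$, and $\Phi$ is analytic at the value $g(R_{\triangle}(y))$, then $\Phi(x,g(x))$ admits an expansion of the same shape in the same dented domain. Since $\exp$, $z\mapsto z^2$ and $\exp_{\geq 2}$ are entire, and $x=R_{\triangle}(y)(1-X^2)$ is itself polynomial in $X$, none of these operations introduces a singularity nearer than $R_{\triangle}(y)$ nor destroys the square-root type; hence applying them successively to $S^{\triangle}_3$ propagates the expansion to $P^{\triangle}_1$, $S^{\triangle}_2$, $P^{\triangle}_0$ and $D^{\triangle}$, all with the common singularity $R_{\triangle}(y)$. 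This immediately yields the claimed form of the four expansions.

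To obtain the coefficients I would substitute the known expansion $S^{\triangle}_3=a^{\triangle}_0(y)+a^{\triangle}_1(y)X+a^{\triangle}_2(y)X^2+a^{\triangle}_3(y)X^3+O(X^4)$ into the explicit relations and extract the coefficients of $X^0,\ldots,X^3$ by the method of undetermined coefficients. The only point requiring care is that $\exp$ must be expanded about the nonzero constant term $a^{\triangle}_0(y)$, so that, for instance, $q^{\triangle}_0(y)=y\,e^{a^{\triangle}_0(y)}$ and $q^{\triangle}_1(y)=y\,e^{a^{\triangle}_0(y)}a^{\triangle}_1(y)$; the coefficients of $S^{\triangle}_2$ then follow by squaring and multiplying by $x=R_{\triangle}(y)(1-X^2)$, those of $P^{\triangle}_0$ by composing $\exp_{\geq 2}$ with the expansion of $S^{\triangle}_2+S^{\triangle}_3$, and the coefficients $d^{\triangle}_i(y)$ are simply $p^{\triangle}_i(y)+q^{\triangle}_i(y)+s^{\triangle}_i(y)+a^{\triangle}_i(y)$. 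Specialising to $y=1$ with $R_{\triangle}(1)\approx 0.19635$ and the numerical values of the $a^{\triangle}_i(1)$ from the preceding lemma produces the stated decimals.

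Finally, the non-vanishing of every coefficient in a neighbourhood of $y=1$ follows by continuity. Each coefficient function is analytic in $y$, being built from the analytic quantities $a^{\triangle}_i(y)$ and $R_{\triangle}(y)$ by analytic operations, and the displayed numerical values show that none of them vanishes at $y=1$; hence each stays nonzero on a small enough neighbourhood. The main obstacle here is purely computational---the repeated coefficient extraction for $P^{\triangle}_0$ and $D^{\triangle}$, where the composition is deepest---rather than conceptual, since all the analytic content is already packaged in the closure property and in the expansion of $S^{\triangle}_3$.
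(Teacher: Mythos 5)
Your proposal is correct and follows essentially the same route as the paper: the paper's proof also observes that $P^{\triangle}_0$, $P^{\triangle}_1$, $S^{\triangle}_2$ and $D^{\triangle}$ are explicit (entire) functions of $S^{\triangle}_3$, substitutes the square-root expansion of $S^{\triangle}_3$ to extract the coefficients, and deduces the non-vanishing near $y=1$ by continuity. Your write-up merely makes explicit the closure property of singular expansions under composition with entire functions, which the paper leaves implicit.
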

\begin{proof}
Observe that $P^{\triangle}_0$, $P^{\triangle}_1$, $S^{\triangle}_2$ and $D^{\triangle}$ can be expressed explicitly in terms of $S^{\triangle}_3$.
Hence we can use the coefficients of the expansion of $S^{\triangle}_3$ obtained in \eqref{eq:sing_S3_triang} to compute the expansion for the all other functions.
The last statement follows from continuity and the fact that the computations give coefficients different from 0.
\end{proof}
We can go now directly to get the singular expansions for $B^{\triangle}(x,y)$:
\begin{theorem}
The generating function $B^{\triangle}(x,y)$ of triangle-free SP networks has the following singular
expansion in a domain dented at $R_{\triangle}$ of the form
$$
B^{\triangle} = b^{\triangle}_0(y) + b^{\triangle}_2(y)X^2+b^{\triangle}_3(y)X^3+O(X^4),
$$
where $X = \sqrt{1-x/R_{\triangle}(y)}$. In particular, when $y=1$ and $R_{\triangle}(1)\approx 0.19635$ we have that
$$
b^{\triangle}_0(1) \approx 0.01964 , \qquad b^{\triangle}_2(1) \approx -0.04123,
\qquad b^{\triangle}_3(1) \approx  0.00359.
$$
Moreover, $b^{\triangle}_0(y)$, $b^{\triangle}_2(y)$ and $b^{\triangle}_3(y)$
are different to 0 for $y$ close enough to 1, and the term $b^{\triangle}_1(y)$ of $X$ is 0 for
any $y$ close enough to 1.
\end{theorem}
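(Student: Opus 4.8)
The plan is to start from the explicit expression for $B^{\triangle}$ furnished by the Dissymmetry Theorem, namely Equation~\eqref{eq:B_triangles} specialized to $u=0$,
\[
B^{\triangle}(x,y)=\tfrac12 x^2y+B^{\triangle}_R(x,y)+B^{\triangle}_M(x,y)-B^{\triangle}_{MR}(x,y),
\]
and to substitute into it the singular expansions of the network functions $P^{\triangle}_0,P^{\triangle}_1,S^{\triangle}_2,S^{\triangle}_3,D^{\triangle}$ established in the preceding lemmas. Near $(x,y)=(R_{\triangle}(1),1)$ each of these is a convergent power series in $X=\sqrt{1-x/R_{\triangle}(y)}$, and $B^{\triangle}_R,B^{\triangle}_M,B^{\triangle}_{MR}$ are assembled from them by the analytic operations of sum, product, $\exp_{\geq k}$ and the cycle construction. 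The only non-entire operation is the logarithm hidden in $\mathrm{Cyc}(x(P^{\triangle}_0+P^{\triangle}_1))$, so the first thing I would check is that its argument stays off the branch cut, i.e.\ that $1-x(P^{\triangle}_0(x,y)+P^{\triangle}_1(x,y))$ does not vanish at $(R_{\triangle}(1),1)$; this is exactly the subcritical requirement that the logarithmic singularity of the cycle construction not interfere with the network branch point, and it is confirmed by the numerical values of the $p^{\triangle}_i(1)$ and $q^{\triangle}_i(1)$. Granting this, composing the analytic operations with the network expansions yields an expansion $b^{\triangle}_0(y)+b^{\triangle}_1(y)X+b^{\triangle}_2(y)X^2+b^{\triangle}_3(y)X^3+O(X^4)$ whose coefficients are explicit analytic functions of the network coefficients, obtained by ordinary series composition.

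The heart of the statement is the vanishing of $b^{\triangle}_1(y)$, i.e.\ the fact that the first genuinely singular term of $B^{\triangle}$ is of order $X^3$ (exponent $3/2$) rather than $X$ (exponent $1/2$). Observe that $X^2=1-x/R_{\triangle}(y)$ is analytic in $x$, so $b^{\triangle}_0+b^{\triangle}_2X^2$ forms the analytic background and the singular behaviour is carried by the odd powers of $X$. The subtlety is that each of $B^{\triangle}_R,B^{\triangle}_M,B^{\triangle}_{MR}$ separately has a nonzero coefficient of $X$, inherited from the square-root singularity of the networks (whose coefficients $a^{\triangle}_1,d^{\triangle}_1,\dots$ are nonzero), yet these contributions cancel in the precise combination dictated by the Dissymmetry Theorem. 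Structurally this cancellation is exactly what makes the ``integration-free'' passage from networks to $2$-connected graphs consistent: the Dissymmetry combination that computes the unrooted class from its node-rooted, edge-rooted and oriented-edge-rooted variants annihilates the half-integer singular term present in each rooted, network-type piece, so that $2$-connected SP graphs display a $3/2$-singularity whereas the networks themselves have a $1/2$-singularity. I would establish $b^{\triangle}_1(y)\equiv 0$ by carrying out the substitution explicitly and reading off the coefficient of $X^1$, verifying that it collapses to zero identically. I expect this to be the main obstacle, since it is the one point where the heuristic ``analytic combination of square-root functions'' does not by itself force the claim, and one must exploit the specific shape of the coefficients produced by the Dissymmetry formulas.

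The remaining assertions are then routine. To obtain the numerical values at $y=1$, I would feed the computed network coefficients from the previous lemmas (the $a^{\triangle}_i(1)$, $p^{\triangle}_i(1)$, $q^{\triangle}_i(1)$, $s^{\triangle}_i(1)$, $d^{\triangle}_i(1)$, together with $R_{\triangle}(1)\approx 0.19635$) into the resulting expressions for $b^{\triangle}_0,b^{\triangle}_2,b^{\triangle}_3$ and evaluate, recovering $b^{\triangle}_0(1)\approx 0.01964$, $b^{\triangle}_2(1)\approx -0.04123$ and $b^{\triangle}_3(1)\approx 0.00359$. Finally, for the non-vanishing in a neighbourhood of $y=1$, I would use that $R_{\triangle}(y)$ and all the coefficient functions are analytic in $y$ near $1$ (as established for the networks and preserved under the analytic operations above): since $b^{\triangle}_0,b^{\triangle}_2,b^{\triangle}_3$ are nonzero at $y=1$, they remain nonzero for $y$ sufficiently close to $1$ by continuity, while $b^{\triangle}_1$ is identically zero. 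This completes the plan.
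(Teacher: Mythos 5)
Your proposal is sound and its first step coincides with the paper's: both substitute the singular expansions of $P^{\triangle}_0$, $P^{\triangle}_1$, $S^{\triangle}_2$, $S^{\triangle}_3$ into the dissymmetry formula \eqref{eq:B_triangles} to get a square-root expansion and the numerical values. Where you genuinely diverge is at the heart of the theorem, the vanishing of $b^{\triangle}_1(y)$. You propose to expand everything and verify by explicit computation that the coefficient of $X$ collapses to zero, whereas the paper avoids this entirely with a one-line combinatorial identity: since the only networks containing the root edge are those in $P^{\triangle}_1$, one has $2y\,\partial B^{\triangle}/\partial y = x^2 P^{\triangle}_1$, and because $\partial/\partial y$ acts on $X=\sqrt{1-x/R_{\triangle}(y)}$ through the moving singularity $R_{\triangle}(y)$, any term $b^{\triangle}_1(y)X$ in $B^{\triangle}$ would produce an $X^{-1}$ term in $\partial B^{\triangle}/\partial y$ --- impossible, since $x^2P^{\triangle}_1$ is bounded at the singularity. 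This argument is shorter, explains structurally why the $2$-connected class has a $3/2$-singularity while networks have a $1/2$-singularity (it is the derivative relation, exactly the ``integration-free'' phenomenon you allude to), and yields the conclusion uniformly for all $y$ near $1$ at no extra cost. Your route works in principle, but note two caveats you should make explicit: (i) the cancellation of the $X$-coefficient is not a formal identity in independent symbols --- it only holds because the network coefficients $a^{\triangle}_i(y), p^{\triangle}_i(y), q^{\triangle}_i(y), s^{\triangle}_i(y)$ satisfy the characteristic system at the branch point, so your ``explicit verification'' must be a symbolic computation invoking those relations; a purely numerical check at $y=1$ cannot distinguish $b^{\triangle}_1=0$ from $b^{\triangle}_1$ merely small, and hence is not a proof; and (ii) your intuition that the dissymmetry combination ``annihilates the half-integer term'' is the right idea, but as you yourself concede it is only a heuristic until the computation (or the paper's derivative identity) is carried out. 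Your preliminary check that $1-x(P^{\triangle}_0+P^{\triangle}_1)$ stays away from zero, so that the logarithm in the cycle construction is harmless, is a legitimate point that the paper leaves implicit in subcriticality.
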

\begin{proof}
Replacing $P^{\triangle}_0$, $P^{\triangle}_1$, $S^{\triangle}_2$, $S^{\triangle}_3$ with their
singular expansion in the equation \eqref{eq:B_triangles} gives directly square-root expansions for $B^{\triangle}$.
Note that $B^{\triangle}$ can also be obtained from $P^{\triangle}_1$ by the equation
$$
2y{\partial B^{\triangle}\over \partial y} = x^2 P^{\triangle}_1.
$$
This is true because by our encoding the only networks which contains the root edge are the ones considered in $P^{\triangle}_1$.
This implies that the singular expansion of $B^{\triangle}$ must start at $X^3$, so that after differentiating it we get the singular expansion of $P^{\triangle}_1$.
\end{proof}
We can now apply the Transfer Theorem for singularity analysis \cite{flajolet1990singularity} in order to get the first asymptotic counting formulas:

\begin{theorem}\label{thm:main2} The number $b^{\triangle}_n$ of 2-connected triangle-free SP graphs with $n$ vertices is asymptotically equal to
$$b^{\triangle}\cdot n^{-5/2}\cdot R_{\triangle}^{-n}\cdot n!\,\,(1+o(1)),$$
where $b^{\triangle}\approx 0.00152$ and $R_{\triangle}^{-1} \approx 5.09289$.
\end{theorem}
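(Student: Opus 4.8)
The plan is to read off the asymptotics of $b^{\triangle}_n = n!\,[x^n]B^{\triangle}(x,1)$ directly from the singular expansion of $B^{\triangle}(x,y)$ at $y=1$ established in the previous theorem, via a single application of singularity analysis. First I would record that the expansion
$$B^{\triangle}(x,1) = b^{\triangle}_0(1) + b^{\triangle}_2(1)X^2+b^{\triangle}_3(1)X^3+O(X^4),\qquad X=\sqrt{1-x/R_{\triangle}},$$
holds in a domain dented at $x=R_{\triangle}=R_{\triangle}(1)$, and that $R_{\triangle}$ is the unique dominant singularity because $B^{\triangle}$ is aperiodic. These are precisely the hypotheses under which the Transfer Theorem of \cite{flajolet1990singularity} applies.

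Next I would isolate the term that governs the growth. The summands $b^{\triangle}_0(1)$ and $b^{\triangle}_2(1)X^2 = b^{\triangle}_2(1)(1-x/R_{\triangle})$ are polynomial in $(1-x/R_{\triangle})$, hence analytic at $x=R_{\triangle}$, and contribute nothing to the coefficient asymptotics; likewise the remainder $O(X^4)=O((1-x/R_{\triangle})^{2})$ transfers to a coefficient of order $O(n^{-3}R_{\triangle}^{-n})$, negligible against the main term. The only summand producing the dominant contribution is therefore $b^{\triangle}_3(1)\,(1-x/R_{\triangle})^{3/2}$, whose coefficient is genuinely present since $b^{\triangle}_3(1)\approx 0.00359\neq 0$.

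Finally I would invoke the standard transfer
$$[x^n]\left(1-\frac{x}{R_{\triangle}}\right)^{3/2} \sim \frac{R_{\triangle}^{-n}\,n^{-5/2}}{\Gamma(-3/2)},$$
and use $\Gamma(-3/2)=\tfrac{4\sqrt{\pi}}{3}$ to get $[x^n]B^{\triangle}(x,1)\sim \tfrac{3}{4\sqrt{\pi}}\,b^{\triangle}_3(1)\,R_{\triangle}^{-n}n^{-5/2}$. Multiplying by $n!$ yields the claim, with $b^{\triangle}=\tfrac{3}{4\sqrt{\pi}}\,b^{\triangle}_3(1)\approx 0.00152$ and $R_{\triangle}^{-1}\approx 5.09289$. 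The computation is routine once the singular expansion is available; the only ingredient that truly relies on the earlier work is the validity of the expansion throughout a $\Delta$-domain together with the aperiodicity guaranteeing that $R_{\triangle}$ is the sole singularity on the circle of convergence, so no substantial obstacle remains at this stage.
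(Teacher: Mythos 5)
Your proposal is correct and takes essentially the same route as the paper: the paper's entire proof is ``Applying the Transfer Theorem to the singular expansion,'' i.e.\ a single application of the transfer theorem of \cite{flajolet1990singularity} to the expansion of $B^{\triangle}(x,1)$ from the preceding theorem, which is exactly what you do. The details you supply---discarding the analytic terms $b^{\triangle}_0(1)+b^{\triangle}_2(1)X^2$ and the $O(X^4)$ remainder, using the vanishing of the $X$-term and the aperiodicity, and computing $b^{\triangle}=\tfrac{3}{4\sqrt{\pi}}\,b^{\triangle}_3(1)\approx 0.00152$---are precisely the routine steps the paper leaves implicit.
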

\begin{proof}
Applying the Transfer Theorem to the singular expansion.
\end{proof}

Now we can move to the connected level. In this case, the solution of the equation $\tau {B^{\bullet}} '(\tau)=1$ is located at $\tau = 0.19631$, and hence the singularity of $C^{\bullet}(x)$ is located at $\rho=0.19403$ . We can then state the final enumerative theorem in this subsection:

\begin{theorem}
The number of connected and general triangle-free SP graphs with $n$ vertices ($c^{\triangle}_n$ and $g^{\triangle}_n$, respectively) is asymptotically equal to
$$c^{\triangle}\cdot n^{-5/2}\cdot \rho_{\triangle}^{-n}\cdot n!\, (1+o(1)) ,\,\,\,\,\, g^{\triangle}\cdot n^{-5/2}\cdot \rho_{\triangle}^{-n}  \cdot n!\, (1+o(1)),$$
where $c^{\triangle}\approx 0.00473$, $g^{\triangle}\approx 0.00563$  and $\rho_{\triangle}^{-1} \approx 6.28155$.
\end{theorem}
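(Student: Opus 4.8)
The plan is to feed the class of triangle-free SP graphs into the subcritical machinery recalled in Section~\ref{sec:subcritical}, using the square-root expansion of the $2$-connected series $B^{\triangle}(x):=B^{\triangle}(x,1)$ already produced on the way to Theorem~\ref{thm:main2}. The first step is to confirm that the class is genuinely subcritical, so that the dominant singularity of $C^{\bullet}(x)$ comes from a branch point of the block-composition equation $C^{\bullet}(x)=x\exp\!\left(B^{\circ}(C^{\bullet}(x))\right)$ with $B^{\circ}={B^{\triangle}}'$, and not from the inherited singularity $R_{\triangle}$ of $B^{\circ}$ itself. Concretely one solves the characteristic system $\tau\,{B^{\circ}}'(\tau)=1$, $\rho_{\triangle}=\tau\,e^{-B^{\circ}(\tau)}$; the reported values $\tau\approx 0.19631<R_{\triangle}\approx 0.19635$ show $\tau<R_{\triangle}$, hence $B^{\circ}$ is analytic at $\tau$ and the subcritical condition $C^{\bullet}(\rho_{\triangle})<R_{\triangle}$ holds. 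This locates the common singularity $\rho_{\triangle}$ of $C^{\bullet}$, $C$ and $G$.

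Granting subcriticality, the local analysis of Section~\ref{sec:subcritical} applies verbatim. The function $C^{\bullet}(x)$ acquires a square-root singularity
\[
C^{\bullet}(x)=h_1(x)-h_2(x)\sqrt{1-\frac{x}{\rho_{\triangle}}},
\]
with $h_1,h_2$ analytic at $\rho_{\triangle}$ and $h_2(\rho_{\triangle})>0$. Since $C^{\bullet}(x)=xC'(x)$, integrating raises the exponent by one, giving $C(x)=h_3(x)+h_4(x)\left(1-x/\rho_{\triangle}\right)^{3/2}$; and because $G(x)=\exp(C(x))$ (equivalently through the representation in Remark~\ref{mainremark}), the same $3/2$-exponent survives, so $G(x)=h_5(x)+h_6(x)\left(1-x/\rho_{\triangle}\right)^{3/2}$ with all $h_i$ analytic at $\rho_{\triangle}$. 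These are precisely the singular forms derived in Section~\ref{sec:subcritical}.

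Next I would apply the Transfer Theorem for singularity analysis~\cite{flajolet1990singularity} to the $\left(1-x/\rho_{\triangle}\right)^{3/2}$ terms, which yields
\[
[x^n]\,C(x)=\frac{3\,h_4(\rho_{\triangle})}{4\sqrt{\pi}}\,n^{-5/2}\,\rho_{\triangle}^{-n}\bigl(1+O(n^{-1})\bigr),
\]
together with the analogous formula for $[x^n]G(x)$ carrying $h_6(\rho_{\triangle})$ in place of $h_4(\rho_{\triangle})$. Multiplying by $n!$ turns these coefficient asymptotics into the stated estimates for $c^{\triangle}_n$ and $g^{\triangle}_n$, with $c^{\triangle}=3h_4(\rho_{\triangle})/(4\sqrt{\pi})$ and $g^{\triangle}=3h_6(\rho_{\triangle})/(4\sqrt{\pi})$, both positive since the coefficients are.

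Finally, to pin down the explicit constants I would compute $h_4(\rho_{\triangle})$ and $h_6(\rho_{\triangle})$ by substituting the square-root expansions of the network series and of $B^{\circ}$ (from the preceding lemmas) into $C^{\bullet}=x\exp(B^{\circ}(C^{\bullet}))$ and matching the coefficient of $\sqrt{1-x/\rho_{\triangle}}$ to extract $h_2(\rho_{\triangle})$; then $h_4(\rho_{\triangle})$ follows from the integration $C=\int_0^x C^{\bullet}(\xi)/\xi\,d\xi$, and $h_6(\rho_{\triangle})$ from $G=\exp(C)$, whereupon the numerics return $c^{\triangle}\approx 0.00473$ and $g^{\triangle}\approx 0.00563$. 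I expect this last constant extraction --- carefully propagating the coefficient of the $3/2$-term through the two successive compositions (block to connected, connected to general) --- to be the only real obstacle; the qualitative normality-free singularity transfer is immediate from the subcritical framework already established.
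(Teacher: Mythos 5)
Your proposal is correct and follows essentially the same route as the paper: both identify the branch point of the subcritical composition $C^{\bullet}=x\exp(B^{\circ}(C^{\bullet}))$ via the characteristic system $\tau\,{B^{\circ}}'(\tau)=1$, $\rho_{\triangle}=\tau e^{-B^{\circ}(\tau)}$, lift the square-root singularity of $C^{\bullet}$ to the $(1-x/\rho_{\triangle})^{3/2}$ expansions of $C$ and $G$, and finish with the Transfer Theorem. The only (cosmetic) difference is that the paper extracts the constants $c^{\triangle}$ and $g^{\triangle}$ by invoking the explicit coefficient formulas of \cite[Proposition 3.10]{3-con}, whereas you re-derive them by matching singular expansions through the two compositions --- the same computation carried out directly; your explicit check $\tau\approx 0.19631 < R_{\triangle}\approx 0.19635$ of the subcritical condition is a welcome detail the paper leaves implicit.
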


\begin{proof}
This is an straightforward computation. Due to the subcritical scheme, the singularity of both $C^{\triangle}(x,1)$ and $G^{\triangle}(x,1)$ arise from a branch point. The solution to the equation $x{B^{\circ}}'(x)=1$ is given by $\tau_\triangle=0.19629$. Such value gives that $C(x)$ ceases to be analytic at $x=\rho_\triangle=0.15920$ .We apply then the Transfer Theorem to the resulting singular expansion, joint with the expressions of the coefficients of the singular expansions that were obtained in \cite[Proposition 3.10.]{3-con}.
\end{proof}

As a direct consequence of these computations, the probability that a uniformly at random triangle-free SP graph of size $n$ is connected is equal to $\exp(-C_0)\approx 0.83962$ (see \cite[Theorem 4.6.]{3-con}).

These enumerative results complement previous ones concerning SP graphs with certain obstructions. In Table~\ref{table:constants}, the constant growth for (connected) SP graphs, triangle-free SP graphs and bipartite SP graphs is shown. The constant for the full family was obtained in \cite{BoGiKaNo07}, while the asymptotic enumeration for bipartite SP graphs can be found in \cite{WelReqRue2015}.

\begin{table}[htb]
\begin{center}
\begin{tabular}{c| c}
  Family & Constant growth\\\hline
  Series-Parallel & $9.07359$\\
  Triangle-free Series-Parallel & $6.28155$\\
  Bipartite Series-Parallel & $5.30386$\\
  \end{tabular}\\
\caption{Constants growth for series-parallel graphs, and for subfamilies (triangle-free and bipartite). }\label{table:constants}
\end{center}
\end{table}

It is interesting to observe that the asymptotics for triangle-free graphs and bipartite graphs is different. This fact contrasts with the picture that emerges in the general graph setting: as it was proven by Erd\H{o}s, Kleitman and Rothschild in \cite{ErKleRot76}, the number of triangle-free graphs with $n$ vertices is asymptotically equal to the number of bipartite graphs with $n$ vertices.

\subsection{4-cycles}\label{ss.4-cycles-sp}

For the sake of conciseness and in order to show a new set of equations we analyze the statistics of $4$-cycles $C_4$ in 2-connected SP graphs. We proceed as we did in the previous section: we get first the equations defining networks, and then we build the counting formulas of 2-connected SP graphs. We do not deal with the connected and general setting,
because we are in the subcritical case and the procedure
will be very similar to the case of triangles.

The combinatorial ideas to get the generating functions for networks (encoding now the number of cycles of length 4) are similar to the ones used before. We denote by $S_2^\blacksquare$, $S_3^\blacksquare$ and $S_\infty^\blacksquare$ series networks where the poles are at distance 2, 3 and more than 3, respectively. Observe that the first two networks could contribute to the creation of $4$-cycles (by means of parallel operations), while the term $S_\infty^\blacksquare$ cannot.
Similarly, we define $P_1^\blacksquare$, $P_2^\blacksquare$ and $P_\infty^\blacksquare$ parallel networks where the distance of the poles is equal to $1$, 2 or more than 2. In particular the single edge is encoded in $P_1$. The total counting formula for networks is encoded by $D^\blacksquare$.

Note that there is a difference with respect
to triangles. A series network where the poles
are at distance 2 has a unique path of length
2 between the poles. This is not true for the case
of paths of length 3: both $S_2^\blacksquare$ and $S_3^\blacksquare$

may have an arbitrary number of paths of length 3,
and each of those will form a 4-cycle if
we put it in parallel with an edge. This is why
we need two additional functions,
$\overline{S_2^\blacksquare}$ and $\overline{S_3^\blacksquare}$,
that count series networks that will be put in
parallel with an edge, and therefore each path
of length three will contribute with a new
4-cycle. In other words, in $\overline{S_2^\blacksquare}$
and $\overline{S_3^\blacksquare}$ the variable $u$ counts
both 4-cycles and paths of length 3 between
the poles.

In order to count paths of length 3 we need to
note that some of them come from paths of length
2 in the parallel networks that we put in series.
Therefore, we will denote
as $\overline{P_1^\blacksquare}$ and $\overline{P_2^\blacksquare}$
the parallel networks where $u$ counts both
4-cycles and paths of length 2 between the poles.
These paths in turn come from series networks,
so we use again the property that a series
network whose poles are at distance 2 have
a unique path of length 2 between the poles.
This gives the following equations:


\begin{align}\label{eq:C4-networks}
D^\blacksquare&=\, S_2^\blacksquare+S_3^\blacksquare+S_\infty^\blacksquare+
P_1^\blacksquare+P_2^\blacksquare+P_\infty^\blacksquare,\\
S_2^\blacksquare &=\, x (P_1^\blacksquare)^2,\nonumber\\
\overline{S_2^\blacksquare} &=\, x\left(
\overline{P_1^\blacksquare}\right)^2,\nonumber\\
S_3^\blacksquare &=\, x(P_1^\blacksquare(P_2^\blacksquare+S_2^\blacksquare) + P_2^\blacksquare P_1^\blacksquare),\nonumber\\
\overline{S_3^\blacksquare} &=\,
x(P_1^\blacksquare(\overline{P_2^\blacksquare}+uS_2^\blacksquare) + \overline{P_2^\blacksquare} P_1^\blacksquare),\nonumber\\
S_\infty^\blacksquare &=\, x (P_1^\blacksquare(P_\infty^\blacksquare + S_3^\blacksquare + S_\infty^\blacksquare) + P_2^\blacksquare (S_2^\blacksquare + S_3^\blacksquare + S_\infty^\blacksquare +P_2^\blacksquare + P_\infty^\blacksquare) + P_\infty^\blacksquare D^\blacksquare),\nonumber\\
P_1^\blacksquare &=\, y\left(\exp(\overline{S_3^\blacksquare}+S_\infty^\blacksquare)
\sum_{k\geq 0}u^{\binom{k}{2}}{\left(\overline{S_2^\blacksquare}\right)^k \over k!}\right),\nonumber\\
\overline{P_1^\blacksquare} &=\, y\left(\exp(\overline{S_3^\blacksquare}+S_\infty^\blacksquare)
\sum_{k\geq 0}u^{\binom{k+1}{2}}{\left(\overline{S_2^\blacksquare}\right)^k \over k!}\right),\nonumber\\
P_2^\blacksquare &=\, S_2^\blacksquare \exp_{\geq 1} (S_3^\blacksquare+S_\infty^\blacksquare) + \exp (S_3^\blacksquare+S_\infty^\blacksquare)\sum_{k \geq 2}u^{\binom{k}{2}}{(S_2^\blacksquare)^k\over k!} ,\nonumber\\
\overline{P_2^\blacksquare} &=\, uS_2^\blacksquare \exp_{\geq 1} (S_3^\blacksquare+S_\infty^\blacksquare) + \exp (S_3^\blacksquare+S_\infty^\blacksquare)\sum_{k \geq 2}u^{\binom{k+1}{2}}{(S_2^\blacksquare)^k\over k!} ,\nonumber\\
P_\infty^\blacksquare &=\, \exp_{\geq 2} (S_3^\blacksquare+S_\infty^\blacksquare).\nonumber\\ \nonumber
\end{align}

The equations for series networks are obtained by fixing the network type which is incident with the $0$-pole (which must be of parallel type), and the network incident with the $\infty$-pole.
In particular, the indices must sum the corresponding index in $S$.
The equations for parallel networks are more involved: in this case sets of networks of type $S_2^\blacksquare$ can create a quadratic number of copies of  $C_4$, hence the infinite sums with quadratic exponents in $u$.

Starting from these equations, we can go to deduce counting formulas for 2-connected objects. In order to apply the dissymmetry theorem we need to obtain
the corresponding $B_R^\blacksquare$, $B_M^\blacksquare$ and $B_{RM}^\blacksquare$ as follows:
$$
B_R^\blacksquare = \text{Cyc} (x(P_1^\blacksquare+P_2^\blacksquare+P_\infty^\blacksquare))
+{x^3\over 6}\left(\left(\overline{P_1^\blacksquare}\right)^3-(P_1^\blacksquare)^3+
3(P_1^\blacksquare)^2\overline{P_2^\blacksquare}-
3\left(P_1^\blacksquare\right)^2P_2^\blacksquare\right)+
(u-1){(xP_1^\blacksquare)^4\over 8}
$$
$$
B_M^\blacksquare = {x^2\over 2}\left(
P_1^\blacksquare+P_2^\blacksquare+P_\infty^\blacksquare -
\left(y+y(\overline{S_2^\blacksquare}+\overline{S_3^\blacksquare}
+S_\infty^\blacksquare)
+{u(S_2^\blacksquare)^2 \over 2}+S_2^\blacksquare(S_3^\blacksquare+S_\infty^\blacksquare)
+{(S_3^\blacksquare+S_\infty^\blacksquare)^2 \over 2}
\right)\right)
$$
$$
B_{RM}^\blacksquare = {x^2\over 2}(\overline{S_2^\blacksquare}
(\overline{P_1^\blacksquare}-y)+
S_2 (\overline{P_2^\blacksquare} + P_\infty^\blacksquare)
+(\overline{S_3^\blacksquare}+S_\infty^\blacksquare)(P_1^\blacksquare-y)+
(S_3^\blacksquare+S_\infty^\blacksquare)(P_2^\blacksquare+P_\infty^\blacksquare))
$$
$$
B = {1 \over 2}x^2 y + B_R^\blacksquare+B_M^\blacksquare-B_{RM}^\blacksquare,
$$
where $B_R^\blacksquare$ represents 2-connected series-parallel
graphs rooted at a ring, $B_M^\blacksquare$
represents 2-connected series-parallel
graphs rooted at a multiedge, and
$B_{RM}^\blacksquare$ represents 2-connected series-parallel
graphs rooted at a ring and a multiedge that are
adjacent at the decomposition tree.
In the case of $B_R^\blacksquare$ we have to deal with several
special cases, since if the length of the ring
is 3 or 4, then 4-cycles might appear. If the length
is 4, a single cycle appears. If the length
is 3, many cycles might appear if we replace at least
two edges of the ring with a parallel network of
the kind $P_1^\blacksquare$: in particular, any path of length
2 in the other parallel network will produce a 4-cycle.
In the case of $B_M^\blacksquare$, the parallel networks already
count all the 4-cycles, but since the number
of edges must be at least three, we have to remove
the cases with one and two series networks or edges in
parallel.
In the case of $B_{RM}^\blacksquare$ we have to consider the special
cases depending on the length of the ring and whether
there is an edge between the poles of the multiedge.
If the length of the ring is three, then any path
of length two in the parallel network will produce
a 4-cycle, whereas if there is an edge between the
poles of the multiedge, then any path of length
three in the series network will produce a 4-cycle,
as it is shown in Figure~\ref{graphicringmulti}.

\begin{figure}[htb]
\begin{center}
\includegraphics[width=7cm]{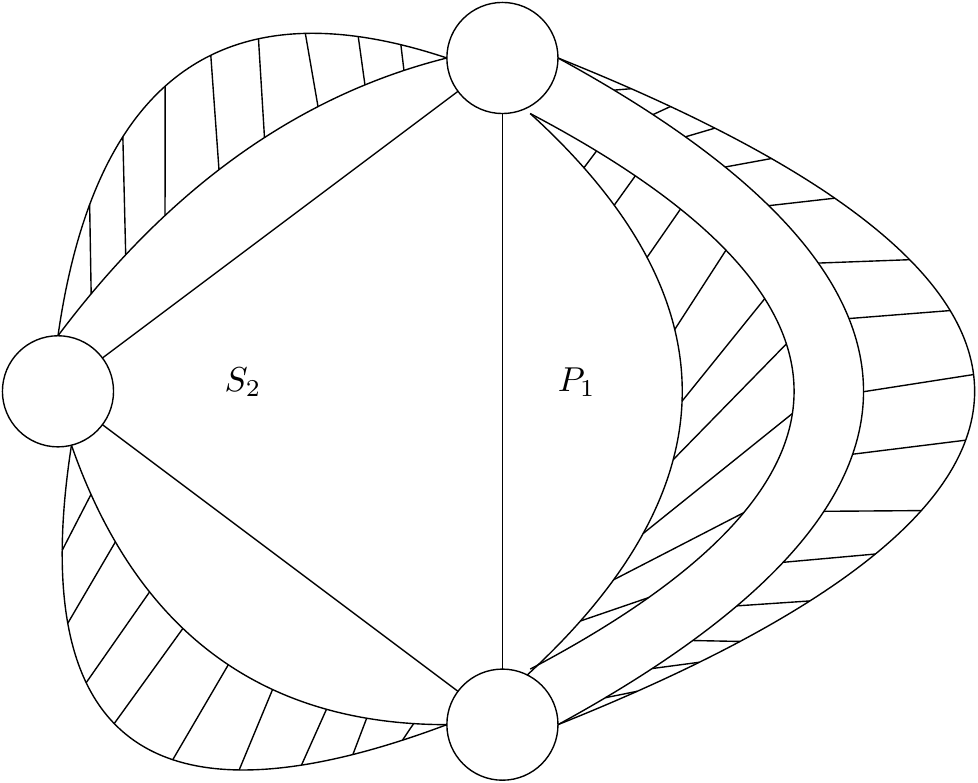}
\end{center}
\caption{ 2-connected series parallel graph rooted in a ring of length 3 and a multiedge with an edge between the poles. Each path of length 3 in $S_2^\blacksquare$ and
each path of length 2 in $P_1^\blacksquare$
will produce a new 4-cycle.}
\label{graphicringmulti}
\end{figure}

Using these expression
and setting $y=1$, $u=0$ we obtain the
radius of convergence and the singularity analysis
of $B^\blacksquare(x,1,0)$, which,
by means of the Transfer Theorem,
gives the asymptotic enumeration
of 2-connected SP-graphs without 4-cycles.
\begin{theorem}
The number of 2-connected quadrangle-free SP graphs with $n$ vertices ($b^{\square}_n$) is asymptotically equal to
$$b^{\square}\cdot n^{-5/2}\cdot R_{\square}^{-n}  \cdot n!\, (1+o(1)),$$
where $b^{\square}\approx 0.00145$
and $R_{\square}^{-1} \approx 5.13738$.
\end{theorem}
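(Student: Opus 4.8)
The plan is to follow, essentially verbatim, the strategy used for triangle-free graphs in Section~\ref{sec:enum}, now applied to the network system \eqref{eq:C4-networks} together with the block expressions for $B_R^\blacksquare$, $B_M^\blacksquare$, $B_{RM}^\blacksquare$. First I would set $u=0$, keeping $y$ in a small neighbourhood of $1$ and specializing to $y=1$ only at the very end; I write $S_3^\square(x,y):=S_3^\blacksquare(x,y,0)$, $B^\square(x,y):=B^\blacksquare(x,y,0)$, and so on. The crucial simplification is that, at $u=0$, every infinite sum $\sum_{k\ge 0}u^{\binom{k}{2}}(\cdots)^k/k!$ and $\sum_{k\ge 0}u^{\binom{k+1}{2}}(\cdots)^k/k!$ collapses: only the terms with $\binom{k}{2}=0$ (resp. $\binom{k+1}{2}=0$) survive, i.e. $k\in\{0,1\}$ (resp. $k=0$). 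Hence the parallel equations become entire exponential--polynomial expressions in the surviving functions (for instance $P_1^\square = y\,e^{\overline{S_3^\square}+S_\infty^\square}(1+\overline{S_2^\square})$ and $\overline{P_2^\square}=0$), and \eqref{eq:C4-networks} reduces to a \emph{finite} strongly connected system of entire functional equations in $S_2^\square,S_3^\square,S_\infty^\square,P_1^\square,P_2^\square,P_\infty^\square$ and their barred variants.

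Second, I would verify that this finite system falls under a smooth implicit-function scheme. As in Lemma~\ref{lem:S3} and the triangle-free lemmas, all right-hand sides are entire with non-negative Taylor coefficients and the system is strongly connected, so I would invoke \cite[Theorem 2]{DrFuKaKrRu11} (itself a consequence of \cite[Theorem 2.33]{DrmotaBook}); aperiodicity then guarantees a unique dominant singularity. The output is a common square-root expansion $S_3^\square(x,y)=a_0(y)+a_1(y)X+a_2(y)X^2+\cdots$ with $X=\sqrt{1-x/R_\square(y)}$, and every other network function inherits the same singularity type because each is an explicit entire function of $S_3^\square$. Solving the characteristic system for the reduced equation $S_3^\square=G(S_3^\square,x,y)$, namely $a_0=G(a_0,R_\square,1)$ and $1=G_U(a_0,R_\square,1)$, numerically at $y=1$ yields $R_\square^{-1}\approx 5.13738$.

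Third, I would substitute these expansions into the dissymmetry expression $B^\square(x,y)=\tfrac12 x^2 y + B_R^\square + B_M^\square - B_{RM}^\square$ to obtain the singular expansion of $B^\square$. Exactly as in the triangle-free case, the relation $2y\,\partial B^\square/\partial y = x^2 P_1^\square$ (which holds because the only networks containing the root edge are those counted by $P_1^\blacksquare$) forces the non-analytic part of $B^\square$ to start at $X^3$: differentiating $X^3$ in $y$ reproduces exactly the $X$-type singularity of $P_1^\square$, whereas an $X$-coefficient in $B^\square$ would produce an inadmissible $X^{-1}$ term, so that coefficient must vanish and $X^2=1-x/R_\square$ is analytic. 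Finally I would apply the Transfer Theorem \cite{flajolet1990singularity} to the $X^3=(1-x/R_\square)^{3/2}$ term, whose contribution to $[x^n]B^\square(x,1)$ is of order $n^{-5/2}R_\square^{-n}$ with the standard factor $3/(4\sqrt{\pi})$, and multiply by $n!$ (these are EGFs) to obtain $b^\square_n \sim b^\square\,n^{-5/2}R_\square^{-n}\,n!$, where $b^\square$ is read off from the $X^3$-coefficient $b_3^\square(1)$ through that Gamma-factor.

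The hard part will be the second step: rigorously checking that the enlarged finite system, including the barred auxiliary functions $\overline{S_2^\square},\overline{S_3^\square},\overline{P_1^\square}$, is genuinely strongly connected and that the characteristic system admits a valid positive solution with $a_0$ below the radius of convergence of the surrounding exponentials -- precisely the delicate point already encountered for $S_3^\triangle$. Once the square-root nature of the networks at $R_\square$ and the vanishing of the $X$-coefficient in $B^\square$ are secured, the determination of the numerical values of $R_\square$ and $b^\square$ is a routine, if lengthy, computation.
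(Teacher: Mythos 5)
Your proposal is correct and follows essentially the same route as the paper, whose own justification is just the remark that setting $y=1$, $u=0$ in the network system \eqref{eq:C4-networks} and the dissymmetry expressions yields the singularity analysis of $B^\blacksquare(x,1,0)$, to which the Transfer Theorem is applied. You have fleshed out exactly the intended argument by transplanting the triangle-free template of Section~\ref{sec:enum} (collapse of the $u$-sums at $u=0$, square-root singularity via the implicit-function/positive-system machinery, vanishing of the $X$-coefficient of $B^\square$ via $2y\,\partial B^\square/\partial y = x^2 P_1^\square$, then transfer), including correctly noting that the surviving core system may need to be reduced after the barred functions trivialize.
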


The proof of the next result is analogous to the proof of
triangle-free SP graphs.

\begin{theorem}
The number of connected and general quadrangle-free SP graphs with $n$ vertices ($c^{\square}_n$ and $g^{\square}_n$, respectively) is asymptotically equal to
$$c^{\square}\cdot n^{-5/2}\cdot \rho_{\square}^{-n}\cdot n!\, (1+o(1)) ,\,\,\,\,\, g^{\square}\cdot n^{-5/2}\cdot \rho_{\square}^{-n}  \cdot n!\, (1+o(1)),$$
where $c^{\square}\approx 0.00233$, $g^{\square}\approx 0.00276$  and $\rho_{\square}^{-1} \approx 6.41498$.
\end{theorem}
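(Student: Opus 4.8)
The plan is to repeat, essentially verbatim, the three-level argument used for triangle-free graphs, the only change being that the obstruction is now a $4$-cycle rather than a triangle. I would first specialise the network system \eqref{eq:C4-networks} by setting $u=0$, which discards every network containing a $C_4$; solving this finite system gives the quadrangle-free network series $S_2^\blacksquare,S_3^\blacksquare,S_\infty^\blacksquare,P_1^\blacksquare,P_2^\blacksquare,P_\infty^\blacksquare$ (all evaluated at $y=1$), and substituting them into the dissymmetry expressions for $B_R^\blacksquare$, $B_M^\blacksquare$ and $B_{RM}^\blacksquare$ yields $B^\blacksquare(x,1,0)$. This $2$-connected series carries a square-root singularity at $x=R_\square$, whose location and singular coefficients are exactly those already computed for the preceding theorem (the one giving $R_\square^{-1}\approx 5.13738$). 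I then set $B^{\square,\circ}(x)=\frac{\partial}{\partial x}B^\blacksquare(x,1,0)$, the EGF of derived quadrangle-free blocks.

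The second step is to pass to the connected and general levels through the subcritical scheme of Section~\ref{sec:subcritical}. By that theory the dominant singularity of the pointed connected series $C^{\square,\bullet}(x)=x\exp(B^{\square,\circ}(C^{\square,\bullet}(x)))$ arises from a branch point and not from the singularity of $B^{\square,\circ}$: one solves $\tau\,{B^{\square,\circ}}'(\tau)=1$ for $\tau=\tau_\square$ and puts $\rho_\square=\tau_\square\exp(-B^{\square,\circ}(\tau_\square))$, which numerically gives $\rho_\square^{-1}\approx 6.41498$. Checking the subcritical inequality $\tau_\square<R_\square$ guarantees that this branch point is met strictly inside the disc of convergence of $B^{\square,\circ}$, so the full machinery of Section~\ref{sec:subcritical} applies: $C^{\square,\bullet}$ has a square-root singularity at $\rho_\square$, hence the connected EGF $C^\square$ admits a singular expansion of the form $h_3(x)+h_4(x)(1-x/\rho_\square)^{3/2}$, and, through $G^\square=\exp(C^\square)$ (cf.\ \eqref{eq:main_eq}), so does $G^\square$.

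Finally I would apply the Transfer Theorem for singularity analysis \cite{flajolet1990singularity} to both expansions. The $(1-x/\rho_\square)^{3/2}$ term produces coefficients of order $n^{-5/2}\rho_\square^{-n}$, and multiplying by $n!$ (since these are exponential generating functions) gives the claimed asymptotics for $c^\square_n$ and $g^\square_n$; the constant $c^\square$ is read off from the coefficient $h_4(\rho_\square)$, while $g^\square=c^\square\exp(C^\square(\rho_\square))$ follows from $G^\square=\exp(C^\square)$, consistently with the numerical ratio $g^\square/c^\square\approx 1.18$. As in the triangle-free proof, the explicit coefficients needed to turn this into the stated numbers can be borrowed from the singular expansions in \cite[Proposition 3.10]{3-con}. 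The only genuine point to verify -- and hence the main (mild) obstacle -- is the subcritical inequality $\tau_\square<R_\square$ together with the bookkeeping that propagates the singular coefficients cleanly from the $2$-connected level through $C^\square$ to $G^\square$; everything else is guaranteed by the subcritical analytic framework and is therefore routine.
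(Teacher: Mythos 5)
Your proposal is correct and takes essentially the same route as the paper: the paper's proof (stated only as ``analogous to the triangle-free case'') likewise sets $u=0$ in the network system \eqref{eq:C4-networks}, forms $B^\blacksquare(x,1,0)$ via the dissymmetry theorem, locates the dominant singularity of the connected/general series at the branch point given by $\tau_\square\,{B^{\square,\circ}}'(\tau_\square)=1$ and $\rho_\square=\tau_\square\exp\left(-B^{\square,\circ}(\tau_\square)\right)$ under the subcritical scheme, and finishes with the Transfer Theorem using the singular-expansion coefficients of \cite[Proposition 3.10]{3-con}. The extra points you flag, namely verifying $\tau_\square<R_\square$ and the relation $g^\square=c^\square\exp\left(C^\square(\rho_\square)\right)$, are precisely the bookkeeping the paper leaves implicit, so nothing is missing.
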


We use Remark~\ref{rem:system} to obtain the following result about 4-cycles.
It is a modification of \cite[Theorem 2.35]{DrmotaBook}, which provides
a way to compute the expectation and variance of generating functions
that satisfy the following system of equations:
\begin{equation}\label{eq:system1}
\boldsymbol{y} = \boldsymbol{F}(x,\boldsymbol{y}, \boldsymbol{u}),
\end{equation}
\begin{equation}\label{eq:system2}
0 =
\det(\boldsymbol{I}-\boldsymbol{F}_{\boldsymbol{y}}(x,\boldsymbol{y},\boldsymbol{u}).
\end{equation}
According to that theorem, the expectation $\boldsymbol\mu$
and the variance $\boldsymbol\Sigma$ of the parameters $\boldsymbol{u}$
can be computed as
$$
\boldsymbol{\mu}=-{x_{0,\boldsymbol{u}}(1)\over x_{0}(1)},
$$
$$
\boldsymbol{\Sigma}=-{x_{0,\boldsymbol{uu}}(1)\over x_{0}(1)}+
\boldsymbol{\mu}\boldsymbol{\mu}^T + \text{diag}(\boldsymbol{\mu}),
$$
where $x = x_0 (\boldsymbol{u})$ and
$\boldsymbol{y} = \boldsymbol{y}_0(\boldsymbol{u})$
are the solutions of the system (\ref{eq:system1}) and
(\ref{eq:system2}). After half an hour of execution time in Maple we get
the following theorem:
\begin{theorem}\label{th:number2}
The number of quadrangles $W_n^{\blacksquare}$ of a uniformly at random 2-connected SP graph with $n$ vertices
is asymptotically Gaussian, with
$$
\mathbb{E}[W_n^{\blacksquare}] = \mu_{\blacksquare,2} n(1+o(1)),\qquad \mathbb{V}\mathrm{ar}[W_n^{\blacksquare}] = \sigma_{\blacksquare,2}^2 n(1+o(1)),
$$
where $\mu_{\blacksquare,2} \approx 0.51235$ and $\sigma_{\blacksquare,2}^{2} \approx 0.25418$.
\end{theorem}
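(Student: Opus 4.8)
The plan is to reduce the statement to the system version of the Quasi-Powers machinery, exactly as in the $2$-connected triangle case treated above, but now starting from the network system \eqref{eq:C4-networks}. First I would record that \eqref{eq:C4-networks} (with $y=1$) is a finite, positive, and strongly connected system in the unknowns $S_2^\blacksquare, S_3^\blacksquare, S_\infty^\blacksquare, \overline{S_2^\blacksquare}, \overline{S_3^\blacksquare}, P_1^\blacksquare, P_2^\blacksquare, P_\infty^\blacksquare, \overline{P_1^\blacksquare}, \overline{P_2^\blacksquare}$: each unknown occurs as an argument on the right-hand side of another, and the barred auxiliary functions are mutually coupled with the unbarred ones (for instance $P_1^\blacksquare$ depends on $\overline{S_3^\blacksquare}$ and $\overline{S_2^\blacksquare}$, while $\overline{S_3^\blacksquare}$ depends on $P_1^\blacksquare$ through $\overline{P_2^\blacksquare}$ and on $S_2^\blacksquare$). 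Since we remain in the subcritical, aperiodic regime, the Drmota--Lalley--Woods theorem in the form of Remark~\ref{rem:system} (compare \cite[Theorem 2.35]{DrmotaBook}) then yields a common square-root singularity for all network generating functions, located at a point $x=R_\blacksquare(u)$ depending analytically on $u$ near $u=1$, where $R_\blacksquare(1)$ is the radius of convergence of the ordinary $2$-connected series-parallel generating function $B^\blacksquare(x,1,1)$.

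Next I would transfer this singular behaviour to the $2$-connected level. The generating function $B^\blacksquare(x,1,u)$ is obtained from the networks through the dissymmetry identity $B=\tfrac12 x^2 y + B_R^\blacksquare + B_M^\blacksquare - B_{RM}^\blacksquare$ using only sums, products, and the analytic operators $\mathrm{Cyc}$ and $\exp_{\geq k}$. Consequently $B^\blacksquare(x,1,u)$ inherits a singular expansion of the form $B^\blacksquare(x,1,u)=g_\blacksquare(x,u)-h_\blacksquare(x,u)\sqrt{1-x/R_\blacksquare(u)}$, with $g_\blacksquare,h_\blacksquare$ analytic and $h_\blacksquare\neq 0$ near $(x,u)=(R_\blacksquare(1),1)$, exactly as in Equation~\eqref{eq:B_tri} for triangles. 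The decisive point, as in Lemma~\ref{lem:S3}, is that the singularity location of $B^\blacksquare$ is precisely the network singularity $R_\blacksquare(u)$, so that no new singular contribution is introduced by the dissymmetry step.

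From here the argument is the Quasi-Powers Theorem \cite{quasi-powers}. Singularity analysis gives $[x^n]B^\blacksquare(x,1,u)\sim c(u)\,R_\blacksquare(u)^{-n}n^{-3/2}$ uniformly for $u$ near $1$, so the probability generating function satisfies $\mathbb{E}[u^{W_n^\blacksquare}]=[x^n]B^\blacksquare(x,1,u)/[x^n]B^\blacksquare(x,1,1)=(R_\blacksquare(1)/R_\blacksquare(u))^n(1+o(1))$ with an analytic multiplicative factor. This is the quasi-power shape, whence $W_n^\blacksquare$ is asymptotically Gaussian with $\mathbb{E}[W_n^\blacksquare]=\mu_{\blacksquare,2}n(1+o(1))$ and $\mathbb{V}\mathrm{ar}[W_n^\blacksquare]=\sigma_{\blacksquare,2}^2 n(1+o(1))$, where, writing $x_0(u)=R_\blacksquare(u)$,
\[
\mu_{\blacksquare,2}=-\frac{x_0'(1)}{x_0(1)},\qquad
\sigma_{\blacksquare,2}^2=-\frac{x_0''(1)}{x_0(1)}-\frac{x_0'(1)}{x_0(1)}+\left(\frac{x_0'(1)}{x_0(1)}\right)^2,
\]
which are the scalar specializations of the system formulas $\boldsymbol\mu=-x_{0,\boldsymbol u}(1)/x_0(1)$ and $\boldsymbol\Sigma=-x_{0,\boldsymbol{uu}}(1)/x_0(1)+\boldsymbol\mu\boldsymbol\mu^T+\mathrm{diag}(\boldsymbol\mu)$ recorded before \eqref{eq:system1}--\eqref{eq:system2}.

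The main obstacle is purely computational: evaluating $R_\blacksquare'(1)$ and $R_\blacksquare''(1)$. These are obtained by implicit differentiation of the characteristic system \eqref{eq:system1}--\eqref{eq:system2}, i.e.\ by differentiating $\boldsymbol y=\boldsymbol F(x,\boldsymbol y,u)$ together with $\det(\boldsymbol I-\boldsymbol F_{\boldsymbol y})=0$ with respect to $u$ at $u=1$ and solving the resulting linear systems, which is exactly the step carried out in Maple. The only genuine verification beyond this bookkeeping is that $\sigma_{\blacksquare,2}^2>0$, ensuring a non-degenerate limit law; this follows from the computed value $\sigma_{\blacksquare,2}^2\approx 0.25418>0$, and could alternatively be argued through the non-degeneracy criterion in the spirit of Remark~\ref{rem:sigma}.
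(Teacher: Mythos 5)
Your proposal is correct and takes essentially the same approach as the paper: the paper likewise invokes Remark~\ref{rem:system} (the finite strongly connected system modification of \cite[Theorem 2.35]{DrmotaBook}) for the network system \eqref{eq:C4-networks}, uses the resulting singularity curve $x_0(u)$ (transferred to the $2$-connected level through the dissymmetry construction, as in the triangle case), and computes $\mu_{\blacksquare,2}=-x_0'(1)/x_0(1)$ and $\sigma_{\blacksquare,2}^2=-x_0''(1)/x_0(1)-x_0'(1)/x_0(1)+\left(x_0'(1)/x_0(1)\right)^2$ by implicit differentiation carried out in Maple. The only difference is expository: you make explicit the strong-connectivity check, the transfer of the square-root singularity through the dissymmetry identity, and the quasi-power step, all of which the paper leaves implicit.
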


\subsection{Girth}\label{ss.girth}

Now we can generalize the previous results to obtain the generating function
of SP graphs with girth at least $k$, for $k\geq 4$.
We need new notation for the series and parallel networks.
In particular, in order to express the generating function of
networks with girth $k$ we define $S_i$, for $2\leq i \leq k-2$
as the generating function of series networks with girth
$\geq k$ and where the distance between the poles is exactly $i$.
The generating function of series networks with girth $\geq k$
and distance to the poles $\geq k-1$ is expressed as $S_\infty$.
Analogously, we define $P_i$, for $1\leq i \leq k-3$ and $P_\infty$
as the generating function of parallel networks with the same
condition on the distance between the poles.
In this case the generating functions satisfy the following
system of equations:
$$
P_i=\left\{\begin{tabular}{ccc}
$y\exp (S_\infty)$ & if & $i=1$\\
$S_i \exp_{\geq 1}(\sum_{j\geq k-i} S_j)$ & if & $1<i<k/2$\\
$S_i \exp_{\geq 1}(\sum_{j\geq i+1} S_j)+
\exp_{\geq 2}(S_i) \exp(\sum_{j\geq i+1} S_j)$ & if & $k/2\leq i \leq k-3$\\
$\exp_{\geq 2}(S_{k-2}+S_\infty)$ & if & $i=\infty$
\end{tabular}\right.
$$
$$
S_i=\left\{\begin{tabular}{ccc}
$x\left[\sum_{j=1}^{i-1}P_j(S_{i-j}+P_{i-j})\right]$ & if & $1\leq i \leq k-2$\\
$x\left[\sum_{j\geq 1}P_j\sum_{t\geq k-j-1}(S_{t}+P_{t})\right]$ & if & $i=\infty$
\end{tabular}\right.
$$

Note that for convenience we are considering that $S_1$ exists, with
a value of $S_1=0$.
This equations generalize the corresponding ones for
girth 3. For the case of parallel networks we impose that
the the distance between the poles of the two shortest
series networks is at most $k$. This can be done by distinguishing
two cases: if the distance between the poles is $i < k/2$
then there must be one single series network with distance $i$
between the poles. This implies that all the other series networks
must have distance at least $k-i$ between the poles, because
otherwise there would be a cycle of length less than $k$.
If the distance between the poles is $i \geq k/2$,
then no cycle of length less than $k$ can be produced,
so we just have to be sure that the shortest series network
that we put in parallel is of length $i$.
For the case of series
networks no further constraint is needed, since no new cycle
can be produced.

This gives a way to compute the exponential growth for any
possible girth. Since the computations are involved
and analogous to the ones of girth 4 we do not include
the results.

\section{Concluding remarks} \label{sec:concluding}

In this work we have shown normal limiting distributions for the number of copies of a given graph for subcritical graph classes. From our study several challenging questions might be investigated in the future. The proof of our main theorem does not give a systematic way to compute both the  expectation and the variance of the corresponding random variable (we only get that they are linear in $n$). In Section \ref{sec:comp} have exploited extra information concerning the structure of series-parallel graphs in order to get precise constants, but getting a full numerical analysis seems to be very difficult in general. Nevertheless we can use the Benjamni-Schramm limit given in \cite{stufler2015,schben15} to get the
constant for the mean value (for details see \cite{stufler2015}). However, it seems to be very difficult
to obtain a general procedure for computing the constant for the variance.


Second, we cannot immediately obtain local limit laws for the number of copies of a given graph.
In our analysis we only provided asymptotic information of our generating functions a neighborhood of
$u=1$ (for $|u|=1$). In order to obtain a local limit theorem we need asymptotic information
for all $u$ with $|u|=1$. This is certainly not our or reach but needs a lot of extra work.

Finally, our techniques do not apply to subgraphs in planar-like families (see \cite{3-con}).
Technically speaking, when analyzing subcritical graph classes we have continuously exploited the assumption that the counting formula for the blocks can be considered to be analytic.
Unfortunately, the picture changes dramatically when dealing with planar graphs, as a critical composition scheme arises (see \cite{Gimenez2009,3-con}).
In this context, very little is known concerning the number of subgraphs in the random planar graph model, even concerning the number of triangles.
The only result we know so far is \cite{ReqRue15}, where the authors exploit the fact that triangles in cubic planar graph do not intersect.
Using this combinatorial fact, they are able to show normality for the number of triangles in cubic planar graphs.
This method does not apply in the general planar setting, as an edge can be incident with many triangles.
So new ideas from different sources are needed to attack this problem.

$$\,\,$$
\paragraph{\textbf{Acknowledgments}:} L.R. and J.R. are grateful to the organizers of the workshop 'Enumerative Combinatorics', held in Oberwolfach on 2--8 March 2014, where this work was initiated. They also thank Marc Noy for fruitful discussions concerning the construction of series-parallel graphs without cycles of length four, and for permanent advice and support.


\bibliographystyle{abbrv} 	
\bibliography{subgraph}

\end{document}